\def\titlerunning#1{\gdef\titrun{#1}}
\def\author#1{\gdef\autrun{\def\and{\unskip, }#1}\gdef\@author{#1}}
\def\address#1{{\def\and{\\\hspace*{18pt}}\renewcommand{\thefootnote}{}%
\footnote {#1}}%
\markboth{\autrun}{\titrun}}
\def\email#1{e-mail: #1}
\def\subjclass#1{{\renewcommand{\thefootnote}{}%
\footnote{\emph{Mathematics Subject Classification (2010):} #1}}}
\def\keywords#1{\par\medskip
\noindent\textbf{Keywords.} #1}
\newtheorem{theorem}{Theorem}[section]
\newtheorem{lemma}[theorem]{Lemma}
\newtheorem{definition}[theorem]{Definition}
\newtheorem{proposition}[theorem]{Proposition}
\newtheorem{corollary}[theorem]{Corollary}
\newcommand{\Proof}{\begin{proof}}
\newcommand{\End}{\end{proof}}
\numberwithin{equation}{section}
\begin{document}


\baselineskip=15pt


\titlerunning{Variational principle for contact Hamiltonian systems and its applications}

\title{Variational principle for contact Hamiltonian systems and its applications}

\author{Kaizhi Wang \and Lin Wang \and Jun Yan}

\date{}

\maketitle

\address{Kaizhi Wang: School of Mathematical Sciences, Shanghai Jiao Tong University, Shanghai 200240, China; \email{kzwang@sjtu.edu.cn}
\and
Lin Wang: Yau Mathematical Sciences Center, Tsinghua University, Beijing 100084, China; \email{lwang@math.tsinghua.edu.cn}
\and Jun Yan: School of Mathematical Sciences, Fudan University and Shanghai Key Laboratory for Contemporary Applied Mathematics, Shanghai 200433, China;
\email{yanjun@fudan.edu.cn}}
\subjclass{37J55; 35F20}

\begin{abstract}
 In \cite{WWY}, the authors provided an implicit variational principle for the contact Hamilton's equations

\begin{align*}
\left\{
\begin{array}{l}
\dot{x}=\frac{\partial H}{\partial p}(x,u,p),\\
\dot{p}=-\frac{\partial H}{\partial x}(x,u,p)-\frac{\partial H}{\partial u}(x,u,p)p,\quad (x,p,u)\in T^*M\times\mathbf{R},\\
\dot{u}=\frac{\partial H}{\partial p}(x,u,p)\cdot p-H(x,u,p),
\end{array}
\right.
\end{align*}
where $M$ is a closed, connected and smooth manifold and $H=H(x,u,p)$ is strictly convex, superlinear  in $p$ and Lipschitz in $u$. In the present paper, we focus on two applications of the variational principle: 1. We provide a  representation formula for the solution semigroup of the evolutionary equation

\[
w_t(x,t)+H(x,w(x,t),w_x(x,t))=0;
\]
2. We study the ergodic problem of the stationary equation via the solution semigroup. More precisely, we find pairs $(u,c)$ with $u\in C(M,\mathbf{R})$ and $c\in\mathbf{R}$ which, in the viscosity sense, satisfy the stationary partial differential equation

\[
H(x,u(x),u_x(x))=c.
\]

\noindent \textbf{R\'esum\'e}

Dans \cite{WWY}, les auteurs ont fourni un principe variationnel implicite pour le contact des \'equations de Hamilton

\begin{align*}
\left\{
\begin{array}{l}
\dot{x}=\frac{\partial H}{\partial p}(x,u,p),\\
\dot{p}=-\frac{\partial H}{\partial x}(x,u,p)-\frac{\partial H}{\partial u}(x,u,p)p,\quad (x,p,u)\in T^*M\times\mathbf{R},\\
\dot{u}=\frac{\partial H}{\partial p}(x,u,p)\cdot p-H(x,u,p),
\end{array}
\right.
\end{align*}
o$\mathrm{\grave{u}}$ M est une vari\'et\'e ferm\'e, connexe et lisse et $H =H(x,u,p)$ est strictement convexe, superlineaire en $p$ et Lipschitz en $u$. Dans cette papier, on se concentre sur deux applications du principe variationnel: 1. On fournit une formule de repr\'esentation pour le demi-groupe de solution de l'\'equation d\'evolution:

\[
w_t(x,t)+H(x,w(x,t),w_x(x,t))=0;
\]
2. On \'etudie le prob$\mathrm{\grave{e}}$me ergodique de l'\'equation stationnaire via le demi-groupe de solution. Plus pr\'ecis\'ement, nous trouvons des paires $(u,c)$ avec $u\in C(M,\mathbf{R})$ et $c\in\mathbf{R}$ qui, au sens de la viscosit\'e,
satisfont l'\'equation stationnaire aux d\'eriv\'ees partielles

\[
H(x,u(x),u_x(x))=c.
\]

\keywords{Contact Hamilton's equations; Implicit variational principle; First-order PDEs; Viscosity solutions}
\end{abstract}

\tableofcontents


\section{Introduction}
\setcounter{equation}{0}

Let $M$ be a closed (i.e., compact, without boundary), connected and smooth manifold. We choose, once and for all, a $C^\infty$ Riemannian metric $g$ on $M$.  Let $H:T^*M\times\mathbf{R}\to \mathbf{R}$ be a $C^3$  function called a contact Hamiltonian.

The aim of this paper is threefold: \emph{1)} to study more interesting properties of the implicit action function introduced in the implicit variational principle established in \cite{WWY} for the contact Hamilton's equations
\begin{align}\label{che}
\left\{
\begin{array}{l}
\dot{x}=\frac{\partial H}{\partial p}(x,u,p),\\
\dot{p}=-\frac{\partial H}{\partial x}(x,u,p)-\frac{\partial H}{\partial u}(x,u,p)p,\qquad (x,p,u)\in T^*M\times\mathbf{R},\\
\dot{u}=\frac{\partial H}{\partial p}(x,u,p)\cdot p-H(x,u,p).
\end{array}
\right.
\end{align}
Equations (\ref{che}) are the equations of motion for the system from contact Hamiltonian dynamics, which is a natural extension of symplectic Hamiltonian dynamics
\cite{Arn}.  \emph{2)} to introduce a solution semigroup for the evolutionary  first-order partial differential equation
\begin{align}\label{ehj}
w_t+H(x,w,w_x)=0,\quad (x,t)\in M\times[0,+\infty)
\end{align}
for which the characteristic equations are (\ref{che}), provide a representation formula for the semigroup by using the implicit action function and show the existence and uniqueness of the viscosity solution to equation (\ref{ehj}) via the semigroup. \emph{3)} to find pairs $(u,c)$ such that the following stationary first-order partial differential equation
\begin{align}\label{shj}
H(x,u,u_x)=c, \quad x\in M
\end{align}
admits viscosity solutions.

We always assume the contact Hamiltonian $H(x,u,p)$ satisfies the following conditions:
\begin{itemize}
	\item [\textbf{(H1)}] Positive Definiteness: For every $(x,p,u)\in T^*M\times\mathbf{R}$, the second partial derivative $\partial^2 H/\partial p^2 (x,u,p)$ is positive definite as a quadratic form;
	\item [\textbf{(H2)}] Superlinearity: For every $(x,u)\in M\times\mathbf{R}$, $H(x,u,p)$ is  superlinear in $p$;
	\item [\textbf{(H3)}] Lipschitz Continuity: $H(x,u,p)$ is uniformly Lipschitz in $u$, i.e., there exists $\lambda>0$ such that $|\frac{\partial H}{\partial u}(x,u,p)|\leq \lambda$ for any $(x,p,u)\in T^*M\times\mathbf{R}$.
\end{itemize}

In \cite{WWY}, we introduced an implicit variational principle for contact Hamilton's equations (\ref{che}), which is stated as follows.
\begin{proposition}[Implicit Variational Principle]\label{VP}
	For any given $x_0\in M$ and $u_0\in\mathbf{R}$, there exists a continuous function $h_{x_0,u_0}(x,t)$ defined on $M\times (0,+\infty)$ satisfying
	\begin{equation}\label{iaf}
	h_{x_0,u_0}(x,t)=u_0+\inf_{\substack{\gamma(t)=x \\  \gamma(0)=x_0} }\int_0^tL(\gamma(\tau),h_{x_0,u_0}(\gamma(\tau),\tau),\dot{\gamma}(\tau))d\tau,
	\end{equation}
	where the infimum is taken among the Lipschitz continuous curves $\gamma:[0,t]\rightarrow M$ and can be achieved. Let $\gamma$ be a Lipschitz continuous curve achieving the infimum in (\ref{iaf}) and
	\[
	x(s):=\gamma(s),\quad u(s):=h_{x_0,u_0}(x(s),s),\quad p(s):=\frac{\partial L}{\partial \dot{x}}(x(s),u(s),\dot{x}(s)).
	\]
	Then $(x(s),u(s),p(s))$ satisfies equations (\ref{che}) with $x(0)=x_0$, $x(t)=x$ and $\lim_{s\to0^+}u(s)=u_0$.
\end{proposition}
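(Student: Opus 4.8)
The plan is to construct $h_{x_0,u_0}(x,t)$ as a fixed point of an implicit minimization, since the integrand in (1.6) depends on the unknown function itself through its second slot. First I would fix $x_0\in M$, $u_0\in\mathbf{R}$, $x\in M$, $t>0$ and consider, for each continuous function $u(\cdot)$ along a curve, the functional $\gamma\mapsto\int_0^t L(\gamma,u(\gamma(\tau),\tau),\dot\gamma)\,d\tau$. Because $H$ satisfies (H1)–(H2), its Legendre transform $L(x,u,\dot x)$ is well defined, $C^2$, strictly convex and superlinear in $\dot x$, and by (H3) it is Lipschitz in $u$ with the same constant $\lambda$. The natural strategy is a Banach–Picard iteration: define a map on curves (or on the candidate value $h$) by minimizing the action with the current guess plugged into the $u$-slot, show this map is a contraction on a suitable complete metric space using the $\lambda$-Lipschitz bound in $u$ together with a Gronwall estimate, and take the unique fixed point. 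Concretely, one can work with the space of continuous functions $w:M\times[0,t]\to\mathbf{R}$ (for $t$ small) equipped with a weighted sup-norm $\|w\|=\sup e^{-Kt}|w|$, define $(\mathcal{T}w)(x,t)=u_0+\inf_\gamma\int_0^t L(\gamma,w(\gamma(\tau),\tau),\dot\gamma)\,d\tau$, and check that $\mathcal{T}$ is a contraction for $K$ large; then extend from small time to all of $(0,+\infty)$ by the Markov (semigroup) property of the construction.

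The key steps, in order, are: (i) establish existence and regularity of $L$ and record the a priori bounds — superlinearity gives coercivity, so minimizing sequences of curves have uniformly bounded action and hence, after reparametrization, uniformly bounded $L^2$-speed; (ii) prove existence of a minimizing curve for the frozen problem by the direct method (Tonelli's theorem): lower semicontinuity of the action under weak convergence plus compactness from coercivity; here one must be careful that the $u$-slot is only continuous, not smooth, but continuity suffices for lower semicontinuity; (iii) prove the contraction estimate: if $w_1,w_2$ are two guesses and $\gamma_i$ the corresponding minimizers, compare $\mathcal{T}w_1$ and $\mathcal{T}w_2$ by evaluating each action at the other's minimizer and using $|L(x,u_1,v)-L(x,u_2,v)|\le\lambda|u_1-u_2|$, obtaining $|\mathcal{T}w_1-\mathcal{T}w_2|(x,s)\le\lambda\int_0^s\|w_1-w_2\|_{\infty,[0,\tau]}\,d\tau$, which by Gronwall forces uniqueness and, on a weighted norm, contraction; (iv) having the fixed point $h_{x_0,u_0}$, pick a minimizer $\gamma$ and show the triple $(x(s),u(s),p(s))$ with $p=\partial L/\partial\dot x$ solves (1.1). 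For (iv) the argument is the classical one: the minimizing curve satisfies the Euler–Lagrange equation for the frozen Lagrangian at its own solution, and then a direct computation — differentiating $u(s)=h_{x_0,u_0}(x(s),s)$ and using the Legendre duality $H(x,u,p)=p\dot x-L(x,u,\dot x)$ — converts the (nonautonomous, because of the $u$-dependence) Euler–Lagrange system into precisely the contact Hamilton equations (1.1), including the third equation $\dot u=p\dot x-H$.

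I expect the main obstacle to be step (iii)–(iv) carried out rigorously \emph{together with} the behavior at $t=0$: showing $\lim_{s\to0^+}u(s)=u_0$ and continuity of $h$ up to the boundary requires uniform (in the small parameter) control of the minimizers as $t\to0$, which follows from the coercivity bound but must be stated carefully since $h$ is only defined on $M\times(0,+\infty)$. A secondary technical point is justifying that the minimizing curve is not merely Lipschitz but as regular as needed to write the Euler–Lagrange equation and to differentiate $h$ along it; the standard Tonelli regularity theory gives that minimizers are $C^2$ on the open interval $(0,t)$ under (H1)–(H2), and the $\lambda$-Lipschitz dependence in $u$ is exactly what keeps the frozen coefficient $u(s)$ Lipschitz so that the nonautonomous Euler–Lagrange equation has the regularity of the autonomous case. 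Finally, extending the local-in-time fixed point to all $t>0$ uses the cocycle identity $h_{x_0,u_0}(x,t+s)=\inf_{y}h_{y,h_{x_0,u_0}(y,s)}(x,t)$, which itself must be verified from the variational definition before the global statement is complete.
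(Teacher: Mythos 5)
First, a contextual remark: this paper does not prove Proposition \ref{VP} at all --- it is quoted verbatim from \cite{WWY}. The closest analogue inside the present paper is Lemma \ref{lem4.1}, where the operator $A_\varphi$ is shown to have a unique fixed point via the iterated bound $\|A_\varphi^n[u]-A_\varphi^n[v]\|_\infty\leq \frac{(T\lambda)^n}{n!}\|u-v\|_\infty$; your contraction scheme in steps (i)--(iii) is essentially that argument (and note that the $n!$ decay makes some iterate a contraction on all of $[0,T]$ for arbitrary $T$, so neither the weighted norm nor the small-time-plus-cocycle extension is actually needed). One technical caveat on your choice of function space: for the point-source problem, $h_{x_0,u_0}(x,t)\to+\infty$ as $t\to 0^+$ for $x\neq x_0$ (superlinearity forces the action of curves joining $x_0$ to $x$ in time $t$ to blow up), so the fixed point does not live in $C(M\times[0,t],\mathbf{R})$ with the sup-norm. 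The clean fix --- and the one used in \cite{WWY} --- is to solve the scalar integral equation $u_\gamma(s)=u_0+\int_0^s L(\gamma,u_\gamma,\dot\gamma)\,d\tau$ along each fixed curve $\gamma$ by Picard iteration and then set $h_{x_0,u_0}(x,t)=\inf_\gamma u_\gamma(t)$, verifying (\ref{iaf}) afterwards by Gronwall comparisons.

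The genuine gap is in step (iv). The ``Euler--Lagrange equation for the frozen Lagrangian'' (treating $u(\tau)=h_{x_0,u_0}(\gamma(\tau),\tau)$ as a fixed coefficient that does not move when $\gamma$ is varied) yields only $\dot p=\frac{\partial L}{\partial x}=-\frac{\partial H}{\partial x}$; it does \emph{not} produce the contact term $-\frac{\partial H}{\partial u}p$ in the second equation of (\ref{che}). That term comes precisely from the fact that perturbing $\gamma$ also perturbs the value $h_{x_0,u_0}(\gamma(\tau),\tau)$ fed back into the $u$-slot. To capture it you would either need $h$ to be differentiable along the minimizer with $h_x=p$ (which is circular at this stage, since $h$ is a priori only Lipschitz), or you must compute the first variation of the curve-wise solution $u_\gamma$ of the integral equation: the variation $\delta u$ solves the linear ODE $\frac{d}{ds}\delta u=L_x\delta\gamma+L_{\dot x}\delta\dot\gamma+L_u\,\delta u$ with $\delta u(0)=0$, and setting $\delta u(t)=0$ for all variations and integrating by parts gives $\dot p=L_x+L_u\,p=-H_x-H_u\,p$. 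Without this step your argument proves the wrong characteristic system. A secondary imprecision of the same kind: ``differentiating $u(s)=h_{x_0,u_0}(x(s),s)$'' to get $\dot u=p\dot x-H$ presupposes regularity of $h$ that you have not established; the correct route is the dynamic programming identity $u(s)=u_0+\int_0^s L(\gamma,u,\dot\gamma)\,d\tau$ along the minimizer, which gives $\dot u=L=p\dot x-H$ directly.
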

Here, $L$ denotes the contact Lagrangian associated with $H$, see Section 2 for the definition.
The function $h_{x_0,u_0}(x,t)$ is called the implicit action function and the curves achieving the infimum in (\ref{iaf}) are called the minimizers of $h_{x_0,u_0}(x,t)$.

Before stating our main results of the present paper, we would like to recall the notion of a viscosity solution to equations (\ref{ehj}) and (\ref{shj}), which was introduced by Crandall and Lions in \cite{CL2}.
\begin{definition}[Viscosity solution of equation (\ref{ehj})]\label{visco}
	Let $V$ be an open subset  $V\subset M$.
	\begin{itemize}
		\item [(i)] A function $u:V\times[0,+\infty)\rightarrow \mathbf{R}$ is called a viscosity subsolution of equation (\ref{ehj}), if for every $C^1$ function $\varphi:V\times[0,+\infty)\rightarrow\mathbf{R}$ and every point $(x_0,t_0)\in V\times[0,+\infty)$ such that $u-\varphi$ has a local maximum at $(x_0,t_0)$, we have
		\[
		\varphi_t(x_0,t_0)+H(x_0,u(x_0,t_0),\varphi_x(x_0,t_0))\leq 0;
		\]
		\item [(ii)] A function $u:V\times[0,+\infty)\rightarrow \mathbf{R}$ is called a viscosity supersolution of equation (\ref{ehj}), if for every $C^1$ function $\psi:V\times[0,+\infty)\rightarrow\mathbf{R}$ and every point $(y_0,s_0)\in V\times[0,+\infty)$ such that $u-\psi$ has a local minimum at $(y_0,s_0)$, we have
		\[
		\psi_t(y_0,s_0)+H(y_0,u(y_0,s_0),\psi_x(y_0,s_0))\geq 0;
		\]
		\item [(iii)] A function $u:V\times[0,+\infty)\rightarrow\mathbf{R}$ is called a viscosity solution of equation (\ref{ehj}) if it is both a viscosity subsolution and a viscosity supersolution.
	\end{itemize}
\end{definition}
\begin{definition}[Viscosity solution of equation (\ref{shj})]\label{visco}
	Let $U$ be an open subset  $U\subset M$.
	\begin{itemize}
		\item [(i)] A function $u:U\rightarrow \mathbf{R}$ is called a viscosity subsolution of equation (\ref{shj}), if for every $C^1$ function $\varphi:U\rightarrow\mathbf{R}$ and every point $x_0\in U$ such that $u-\varphi$ has a local maximum at $x_0$, we have
		\[
		H(x_0,u(x_0),\varphi_x(x_0))\leq 0;
		\]
		\item [(ii)] A function $u:U\rightarrow \mathbf{R}$ is called a viscosity supersolution of equation (\ref{shj}), if for every $C^1$ function $\psi:U\rightarrow\mathbf{R}$ and every point $y_0\in U$ such that $u-\psi$ has a local minimum at $y_0$, we have
		\[
		H(y_0,u(y_0),\psi_x(y_0))\geq 0;
		\]
		\item [(iii)] A function $u:U\to\mathbf{R}$ is called a viscosity solution of equation (\ref{shj}) if it is both a viscosity subsolution and a viscosity supersolution.
	\end{itemize}
\end{definition}

The first main result of this paper is stated as follows.
\begin{theorem}\label{thehj}
	There is a semigroup of operators $\{T_t\}_{t\geq 0}: C(M,\mathbf{R})\mapsto C(M,\mathbf{R})$, such that
	for each $\varphi\in C(M,\mathbf{R})$, $T_t\varphi(x)$ is the unique viscosity solution of equation (\ref{ehj}) with initial value condition $w(x,0)=\varphi(x)$. Furthermore, we have

	\begin{align}\label{rep-f}
	T_t\varphi(x)=\inf_{y\in M}h_{y,\varphi(y)}(x,t), \quad \forall (x,t)\in M\times[0,+\infty),
	\end{align}
	where $h$ is the implicit action function introduced in Proposition \ref{VP}.
\end{theorem}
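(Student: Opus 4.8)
The plan is to first establish that the family $\{T_t\}_{t\ge0}$ defined by the representation formula \eqref{rep-f} is well-defined and maps $C(M,\mathbf{R})$ to itself, then verify the semigroup property, and finally show that $T_t\varphi$ is a viscosity solution; uniqueness will follow from a comparison principle. To see that $T_t\varphi(x)=\inf_{y\in M}h_{y,\varphi(y)}(x,t)$ is finite and continuous, I would use the a priori estimates on the implicit action function from \cite{WWY}: since $\varphi$ is bounded, the values $h_{y,\varphi(y)}(x,t)$ are uniformly bounded for $(x,t)$ in compact sets, and $(y,x,t)\mapsto h_{y,\varphi(y)}(x,t)$ is continuous (jointly), so the infimum over the compact manifold $M$ is attained and continuous in $(x,t)$. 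It is also worth checking $T_0\varphi=\varphi$ by examining the short-time behavior $h_{y,\varphi(y)}(x,t)\to\varphi(x)$ as $t\to0^+$ when $y=x$, using $\lim_{s\to0^+}u(s)=u_0$ from Proposition \ref{VP} together with the minimizing property to rule out contributions from $y\ne x$.

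The semigroup property $T_{t+s}=T_t\circ T_s$ should follow from a Markov-type (dynamic programming) property of the implicit action function, namely
\[
h_{y,\varphi(y)}(x,t+s)=\inf_{z\in M} h_{z,\,h_{y,\varphi(y)}(z,s)}(x,t),
\]
which expresses that a minimizer on $[0,t+s]$ restricts to minimizers on $[0,s]$ and $[s,t+s]$, and conversely minimizers can be concatenated; here the $u$-dependence of $L$ makes the bookkeeping slightly delicate because the running cost along the second piece depends on the action value accumulated along the first piece, but this is exactly encoded in the second subscript of $h$. Taking the infimum over $y\in M$ on both sides and interchanging infima yields $T_{t+s}\varphi=T_t(T_s\varphi)$. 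This calculus fact is presumably available from \cite{WWY} or provable by the same variational arguments; I would cite or prove it as a lemma.

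For the viscosity solution property, I would argue in the standard way. For the subsolution inequality: fix a $C^1$ test function $\varphi$ touching $T_t\varphi$ from above at $(x_0,t_0)$ with $t_0>0$; pick $y_0$ realizing the infimum defining $T_{t_0}\varphi(x_0)$ and a minimizer $\gamma$ on $[0,t_0]$ from $y_0$ to $x_0$; for small $h>0$ the dynamic programming inequality $T_{t_0}\varphi(x_0)\le u(t_0-h)+\int_{t_0-h}^{t_0}L(\gamma,u,\dot\gamma)\,d\tau$ together with $T_{t_0-h}\varphi(\gamma(t_0-h))\le u(t_0-h)$ lets me insert the test function, divide by $h$, let $h\to0$, and use the Fenchel inequality $L(x,u,v)\ge p\cdot v - H(x,u,p)$ plus $\dot\gamma$ matching the test function's spatial gradient to get $\varphi_t+H(x_0,T_{t_0}\varphi(x_0),\varphi_x)\le0$. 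For the supersolution inequality: given a $C^1$ test function touching from below at $(x_0,t_0)$, use the optimal trajectory through $(x_0,t_0)$ extended backward, the identity \eqref{iaf} along it, and convexity/the equality case of Fenchel to produce the reverse inequality. The main obstacle I expect is precisely this viscosity-solution verification: the $u$-dependence of $H$ (hence of $L$) means the action is not additive in the usual way, so the usual Hopf–Lax/Lax–Oleinik manipulations must be replaced by arguments using the implicit relation \eqref{iaf} and its differentiated form, and one must be careful that the "value at the endpoint" appearing in the test-function comparison is $T_t\varphi$ evaluated at the right point, not a frozen constant. Uniqueness of the viscosity solution then follows from the comparison principle for \eqref{ehj}, which holds under (H1)–(H3) (the Lipschitz-in-$u$ hypothesis (H3) is exactly what is needed for comparison); combined with existence this identifies $T_t\varphi$ as *the* viscosity solution with initial datum $\varphi$, completing the proof.
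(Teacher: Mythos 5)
You take the representation formula \eqref{rep-f} as the \emph{definition} of $T_t$ and then try to verify the viscosity property directly; the paper proceeds differently: it first defines $T_t\varphi$ as the unique fixed point of the operator $A_\varphi[u](x,t)=\inf_\gamma\{\varphi(\gamma(0))+\int_0^tL(\gamma(s),u(\gamma(s),s),\dot\gamma(s))\,ds\}$ on $C(M\times[0,T],\mathbf{R})$ (Lemma \ref{lem4.1}, an iterated-contraction argument based on (L3)), proves \eqref{rep-f} afterwards by a Gronwall comparison along minimizers (Proposition \ref{pr4.1}), obtains the semigroup law from the Markov property and monotonicity, and finally checks that the fixed point is a variational solution in the sense of Definition \ref{nw} (Lemma \ref{lem4.2}), invoking Lemma \ref{lem4.3} (Fathi-type argument) that variational solutions are viscosity solutions; uniqueness is by comparison, as you say. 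This structural difference is where your proposal has a genuine gap: with your definition, the two halves of the dynamic programming principle with $T_s\varphi$ itself inside the Lagrangian --- the inequality $T_{t_2}\varphi(\gamma(t_2))-T_{t_1}\varphi(\gamma(t_1))\le\int_{t_1}^{t_2}L(\gamma,T_s\varphi(\gamma(s)),\dot\gamma)\,ds$ for every Lipschitz curve, and the equality along some curve ending at each $(x,t)$ --- are not automatic. Since $L$ is only Lipschitz, not monotone, in $u$, you cannot simply replace $h_{y_0,\varphi(y_0)}(\gamma(s),s)$ by the smaller quantity $T_s\varphi(\gamma(s))$ inside the integral; you would need a calibration lemma (along a minimizer $\gamma$ of $h_{y_0,\varphi(y_0)}(x,t)$ with $y_0$ optimal one has $T_s\varphi(\gamma(s))=h_{y_0,\varphi(y_0)}(\gamma(s),s)$ for all $s$, provable from Proposition \ref{Markov} together with the strict monotonicity of Proposition \ref{Mono I}) plus Gronwall-type estimates, none of which appear in your sketch, although you do flag the difficulty. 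These facts are exactly what the paper's fixed-point definition delivers for free.

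Two smaller points. First, the semigroup law is not obtained by merely ``interchanging infima'': after exchanging $\inf_y$ and $\inf_z$ one still has to show $\inf_yh_{z,h_{y,\varphi(y)}(z,s)}(x,t)=h_{z,\inf_yh_{y,\varphi(y)}(z,s)}(x,t)$, which uses attainment of the inner infimum (compactness of $M$) and monotonicity in $u_0$ (Proposition \ref{Mono I}); this identity is the whole content of the paper's proof of the semigroup property. Second, your test-function argument has the sub- and supersolution toolkits swapped: the subsolution inequality comes from the inequality half of the dynamic programming principle applied to arbitrary short curves of prescribed velocity (so that taking the supremum over velocities reconstructs $H$), whereas the minimizer together with the Fenchel inequality yields the supersolution inequality. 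As written, your subsolution step combines $T_{t_0}\varphi(x_0)\le u(t_0-h)+\int L$ with $T_{t_0-h}\varphi(\gamma(t_0-h))\le u(t_0-h)$, two inequalities pointing the same way, so they do not chain into an upper bound for $T_{t_0}\varphi(x_0)-T_{t_0-h}\varphi(\gamma(t_0-h))$, and no supremum over velocities is ever produced. The conclusion you aim at is reachable (the paper reaches it through variational solutions), but the verification as sketched would not go through.
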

The semigroup obtained in Theorem \ref{thehj} can be regarded as a natural generalization of the Lax-Oleinik semigroup for Hamiltonian systems. It connects viscosity solutions of equation (\ref{ehj}) and the implicit action function. The proof of the representation formula (\ref{rep-f}) for the solution semigroup relies on the implicit variational principle---Proposition \ref{VP}. We think that the representation formula has many potential applications.
Here we use it to prove our second main result of the present paper:
\begin{theorem}\label{thshj}
	There exists a constant $c\in\mathbf{R}$ such that equation (\ref{shj}) admits viscosity solutions.
\end{theorem}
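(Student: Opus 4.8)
The plan is to find the right constant $c$ by analyzing the long-time behaviour of the semigroup $\{T_t\}$ from Theorem \ref{thehj}, applied to a single well-chosen initial datum, and then extract a viscosity solution of the stationary equation (\ref{shj}) as a limit. Since $T_t\varphi$ solves the evolutionary equation (\ref{ehj}), any fixed point of the flow up to a linear drift in $t$ will produce a stationary solution: concretely, if one can find $u\in C(M,\mathbf{R})$ and $c\in\mathbf{R}$ with $T_t u = u - ct$ for all $t\ge 0$, then $w(x,t):=u(x)-ct$ is a viscosity solution of (\ref{ehj}), and plugging the ansatz into the definition of viscosity solution of (\ref{ehj}) (testing with $\varphi(x,t)=\phi(x)-ct$) shows precisely that $u$ is a viscosity solution of (\ref{shj}). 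So the whole problem reduces to producing such a $(u,c)$.

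First I would establish the basic structural properties of $\{T_t\}$ that make this possible: monotonicity ($\varphi\le\psi\Rightarrow T_t\varphi\le T_t\psi$, immediate from the representation formula (\ref{rep-f}) and the variational definition of $h$), and a controlled response to additive constants — here one uses hypothesis \textbf{(H3)}, which by a Gronwall-type estimate along minimizers gives $T_t(\varphi+a)\le T_t\varphi+|a|e^{\lambda t}$ and a matching lower bound, so that $T_t$ is in particular continuous and the family is equi-Lipschitz in a suitable sense. Next I would produce a priori bounds: using superlinearity \textbf{(H2)} and positive definiteness \textbf{(H1)} one gets, for any fixed $\varphi$, uniform (in $x$ and $t$) two-sided bounds and an equicontinuity estimate on $\{T_t\varphi\}_{t\ge 1}$, so that this family is precompact in $C(M,\mathbf{R})$ by Arzelà–Ascoli. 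Then I would define
\[
c:=-\lim_{t\to+\infty}\frac{T_t\varphi(x_0)}{t}
\]
for a reference point $x_0$ (or, more robustly, as $-\inf_t \frac1t\max_M T_t\varphi$), show this limit exists and is independent of $\varphi$ and $x_0$ using monotonicity plus the subadditive-type relation coming from the semigroup property and the constant-shift estimate, and finally show that $u_t := T_t\varphi + ct$ stays bounded and equicontinuous, so a subsequence $u_{t_n}\to u$. One then checks $T_s u = u - cs$ by passing to the limit in $T_s u_{t_n} = T_{s+t_n}\varphi + ct_n = u_{t_n + s} + c t_n - c(t_n+s) \cdot 0$ — i.e.\ $T_s u_{t_n} = u_{t_n+s} - cs$ — using continuity of each $T_s$ together with $u_{t_n+s}\to u$ along a further subsequence; a standard diagonal/uniqueness argument upgrades this to the full limit. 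By the reduction in the first paragraph, $u$ is the desired viscosity solution of (\ref{shj}).

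The main obstacle I expect is the existence and $\varphi$-independence of the constant $c$, i.e.\ the convergence $\frac1t T_t\varphi \to -c$: unlike the classical Lax–Oleinik case the semigroup is \emph{not} additive under constants (only the exponential-in-$t$ estimate from \textbf{(H3)} is available), so the usual subadditivity argument for the existence of the ergodic constant does not apply verbatim. The fix is to work with the max (or min) over $M$ and exploit that the constant-shift distortion $e^{\lambda t}-1$ is controlled on bounded time intervals while the semigroup property lets one iterate over a \emph{fixed} time step; combined with the uniform bounds this yields the limit. A secondary technical point is ensuring that the limit $u$ obtained along a subsequence genuinely satisfies the exact relation $T_s u = u - cs$ rather than merely up to error, which is handled by the equicontinuity/compactness already in place and the continuity of $T_s$ on $C(M,\mathbf{R})$.
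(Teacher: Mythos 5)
There is a genuine gap, and it sits at the very first step of your reduction. If $T_tu=u-ct$ and you set $w(x,t)=u(x)-ct$, then testing the viscosity inequalities for (\ref{ehj}) with $\varphi(x,t)=\phi(x)-ct$ gives $-c+H\bigl(x_0,\,u(x_0)-ct_0,\,\phi_x(x_0)\bigr)\leq 0$, i.e.\ the Hamiltonian is evaluated at the \emph{drifted} value $u(x_0)-ct_0$, not at $u(x_0)$. Since $H$ depends on $u$, this does not yield $H(x,u,u_x)=c$ unless $c=0$ (or $H$ is $u$-independent, which is exactly the classical case your ansatz is borrowed from). The correct device, and the one the paper uses, is to perturb the Hamiltonian itself: replace $L$ by $L+c$ (equivalently $H$ by $H-c$), obtain the associated semigroup $T^c_t$, and look for a \emph{genuine} fixed point $T^c_t\varphi_\infty=\varphi_\infty$; then $w\equiv\varphi_\infty$ solves $w_t+H(x,w,w_x)-c=0$ with $w_t=0$, which is (\ref{shj}).

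The second gap is the construction of $c$. The limit $-\lim_{t\to\infty}T_t\varphi(x_0)/t$ need not exist in $\mathbf{R}$, and when it does it genuinely depends on $\varphi$: for $H(x,u,p)=\tfrac12|p|^2-\lambda u$ one has $T_ta=ae^{\lambda t}$ for constant data, so your candidate $c$ is $0$, $+\infty$ or $-\infty$ according to the sign of $a$. This is consistent with the paper's remark that the admissible $c$ in Theorem \ref{thshj} is in general \emph{not} unique, so there is no single ergodic constant to converge to. Relatedly, your assertion that $\{T_t\varphi\}_{t\geq1}$ is uniformly bounded is false in general; obtaining uniform boundedness of $T^c_t\varphi$ for a suitable $c$ is precisely the hard part of the paper's proof (it introduces the thresholds $c_1=\inf\{c:\sup T^c_t\varphi=+\infty\}$ and $c_2=\sup\{c:\inf T^c_t\varphi=-\infty\}$, shows $c_1>-\infty$, $c_2<+\infty$, and runs a three-case analysis, using the monotonicity and Lipschitz dependence of $h^c$ on $c$ to rule out the bad scenarios). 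Once boundedness is secured, the paper takes $\varphi_\infty=\liminf_{t\to\infty}T^c_t\varphi$ and verifies the fixed-point identity directly from the Markov property; your compactness-plus-continuity argument for extracting a limit is the right spirit for that final step, but it cannot start until the two issues above are repaired.
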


In fact, we show that for each $\varphi\in C(M,\mathbf{R})$, there exists a constant $c\in\mathbf{R}$ such that $\liminf_{t\to+\infty}T^c_t\varphi(x)$ denoted by $\varphi_\infty(x)$ is a viscosity solution of equation (\ref{shj}),
where $\{T^c_t\}_{t\geq 0}$ denotes the semigroup of operators associated with $L+c$ obtained in Theorem \ref{thehj}. We prove it by showing that $\varphi_\infty(x)$ is a fixed point of $\{T^c_t\}_{t\geq 0}$.

We prove Theorem \ref{thshj} by using a variational and dynamical approach.
More precisely, we give the proof of the theorem by carefully analysing the properties of the implicit action function $h_{x_0,u_0}(x,t)$ and the semigroup $\{T^{-}_t\}_{t\geq 0}$ which can be represented by $h_{x_0,u_0}(x,t)$. Thus, the key tool used here is the implicit action function $h_{x_0,u_0}(x,t)$. In \cite{WWY}, several important properties of $h_{x_0,u_0}(x,t)$ were discussed, e.g., monotonicity property,  Markov property. In this paper, we will study more interesting properties of $h_{x_0,u_0}(x,t)$.

Theorem \ref{thshj} concerns the so called ergodic problem or additive eigenvalue problem for $H(x,u,p)$, which plays an essential role in homogenization for Hamilton-Jacobi equations, where it is referred to as the cell problem. The classical result in this direction is due to Lions, Papanicolaou and
Varadhan \cite{LPV}. They obtained the existence of the unique constant $c_0$ for which the Hamilton-Jacobi equation
\begin{align}\label{HJ}
H(x,u_x)=c_0,\quad x\in \mathbf{T}^n
\end{align}
has a continuous viscosity solution. Fathi \cite{Fat97, Fat08} generalized this result to equation (\ref{HJ}) on closed, connected and smooth manifolds and his result is now called the weak KAM theorem. A big difference between Theorem \ref{thshj} for equation (\ref{shj}) and the results mentioned above for equation (\ref{HJ}) is that the constant $c$ in Theorem \ref{thshj} may not be unique, while the constant $c_0$ is unique, called the critical value.

The rest of the paper is organized as follows: Section 2 gives some preliminary results. The purpose of Section 3 is to obtain more properties of the implicit action function. First, we will provide a new monotonicity result and a minimality result for the implicit action function. Then we will prove that the function $(x_0,u_0,x,t)\mapsto h_{x_0,u_0}(x,t)$ is Lipschitz  continuous on $M\times [a,b]\times M\times [\delta,T]$, where $a$, $b\in\mathbf{R}$ with $a<b$ and $0<\delta<T$. At last, the reversibility property of the implicit action function can be obtained, which allows us to define another implicit action function. In Section 4, we will give the proof of Theorem \ref{thehj}. More precisely, we will first introduce the forward and backward solution semigroup for equation (\ref{ehj}). Then by using the implicit action functions, representation formulae for the solution semigroups will be provided. Finally, we will discuss the properties of the solution semigroups and the relationship between the semigroups and the viscosity solutions of equation (\ref{ehj}). Section 5 is devoted to the proof of Theorem \ref{thshj}.

\vskip0.2cm

\textbf{Notations}.

\begin{itemize}
	\item $\mathrm{diam}(M)$ denotes the diameter of $M$.
	\item Denote by $d$ the distance induced by the Riemannian metric $g$ on $M$.
	\item Denote by $\|\cdot\|$ the norms induced by $g$ on both tangent and cotangent spaces of $M$.
	\item $C(M,\mathbf{R})$ stands for  the space of continuous functions on $M$, $\|\cdot\|_0$ denotes the supremum norm on it.
	\item  For $T>0$, $C(M\times[0,T],\mathbf{R})$ stands for  the space of continuous functions on $M\times[0,T]$, $\|\cdot\|_\infty$ denotes the supremum norm on it.
	\item $C^{ac}([a,b],M)$ stands for the space of absolutely continuous curves  $[a,b]\to M$.
	\item For each $t\in\mathbf{R}$, $\{t\}=t$ mod 1 denotes the fractional part of $t$ and $[t]$ denotes the greatest integer not greater than $t$.
	\item Given  $a$, $b$, $\delta$, $T\in\mathbf{R}$ with  $a<b$, $0<\delta<T$, let

	\[
	\Omega_{a,b,\delta,T}=M\times [a,b]\times M\times [\delta,T].
	\]
	
\end{itemize}



\section{Preliminaries and definitions}
We recall and prove some preliminary results in this part.
Propositions \ref{Mono I} and \ref{Markov} are the monotonicity property and Markov property of the implicit action function mentioned in the introduction section.

\vskip0.3cm
\noindent\emph{Contact Lagrangians}.
\vskip0.3cm

We can associate to the contact Hamiltonian a Lagrangian denoted by $L(x,u, \dot{x})$, defined by
\[
L(x,u, \dot{x}):=\sup_{p\in T^*_xM}\{\langle \dot{x},p\rangle-H(x,u,p)\},\quad (x,\dot{x},u)\in TM\times\mathbf{R}.
\]
In view of (H1)-(H3), it is straightforward to check that $L$ admits the following properties:
\begin{itemize}
	\item [\textbf{(L1)}] Positive Definiteness: For every $(x,\dot{x},u)\in TM\times\mathbf{R}$, the second partial derivative $\partial^2 L/\partial {\dot{x}}^2 (x,u,\dot{x})$ is positive definite as a quadratic form;
	\item [\textbf{(L2)}] Superlinearity: For every $(x,u)\in M\times\mathbf{R}$, $L(x,u,\dot{x})$ is superlinear in $\dot{x}$;
	\item [\textbf{(L3)}] Lipschitz Continuity: $L(x,u,\dot{x})$ is uniformly Lipschitz in $u$, i.e., there exists $\lambda>0$ such that $|\frac{\partial L}{\partial u}(x,u,\dot{x})|\leq \lambda$ for any $(x,\dot{x},u)\in TM\times\mathbf{R}$.
\end{itemize}

\noindent\emph{Monotonicity  and Markov properties}.
\begin{proposition}[Monotonicity property I \cite{WWY}]\label{Mono I}
	Given $x_0\in M$ and $u_1$, $u_2\in\mathbf{R}$, if $u_1<u_2$, then we have	
	\[
	h_{x_0,u_1}(x,t)<h_{x_0,u_2}(x,t), \quad \forall (x,t)\in M\times (0,+\infty).
	\]
\end{proposition}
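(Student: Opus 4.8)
The plan is to compare the implicit action functions $h_{x_0,u_1}$ and $h_{x_0,u_2}$ by exploiting the defining fixed-point relation \eqref{iaf} together with the uniform Lipschitz bound (L3) on $L$ in the $u$-variable, and then apply a Gronwall-type argument. First I would fix $(x,t)\in M\times(0,+\infty)$ and let $\gm:[0,t]\to M$ be a minimizer realizing the infimum in \eqref{iaf} for $h_{x_0,u_2}(x,t)$ (such a minimizer exists by Proposition \ref{VP}). Using this same curve $\gm$ as a competitor (not necessarily optimal) for $h_{x_0,u_1}(x,t)$, I get
\[
h_{x_0,u_1}(x,t)\le u_1+\int_0^t L(\gm(\tau),h_{x_0,u_1}(\gm(\tau),\tau),\dot\gm(\tau))\,d\tau,
\]
while \eqref{iaf} gives the exact identity for $h_{x_0,u_2}(x,t)$ along $\gm$. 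Subtracting, and writing $\phi(\tau):=h_{x_0,u_2}(\gm(\tau),\tau)-h_{x_0,u_1}(\gm(\tau),\tau)$, yields
\[
-\,\phi(t)\le (u_1-u_2)+\int_0^t\bigl[L(\gm(\tau),h_{x_0,u_2}(\gm(\tau),\tau),\dot\gm(\tau))-L(\gm(\tau),h_{x_0,u_1}(\gm(\tau),\tau),\dot\gm(\tau))\bigr]d\tau,
\]
and by (L3) the integrand is bounded in absolute value by $\lambda|\phi(\tau)|$.

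Next I would turn this into a differential inequality. The cleanest way is to note that the Markov property (Proposition \ref{Markov}, available below) lets me restrict the relation to subintervals $[0,s]$ with $\gm|_{[0,s]}$ still a minimizer for the relevant action at the endpoint $\gm(s)$; this gives, for all $s\in(0,t]$,
\[
\phi(s)\ge (u_2-u_1)-\lambda\int_0^s|\phi(\tau)|\,d\tau.
\]
Together with the analogous bound obtained by swapping the roles of $u_1,u_2$ (using a minimizer for $h_{x_0,u_1}$), one controls $|\phi(s)|$ and, more importantly, gets a lower bound for $\phi(s)$ itself. A Gronwall estimate then shows $\phi(s)\ge (u_2-u_1)e^{-\lambda s}>0$ for all $s\in(0,t]$, provided one first knows $\phi\ge 0$ so that $|\phi|=\phi$ and the inequality closes; the sign information $\phi\ge 0$ comes from the same comparison run qualitatively, or from continuity of $h$ near $s=0$ where $\phi(0^+)=u_2-u_1>0$. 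Evaluating at $s=t$ gives $h_{x_0,u_2}(x,t)-h_{x_0,u_1}(x,t)\ge (u_2-u_1)e^{-\lambda t}>0$, which is the strict inequality claimed (indeed a quantitative version of it).

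The main obstacle I anticipate is the bootstrapping needed to guarantee $\phi\ge 0$ before the Gronwall step, since the integral inequality above involves $|\phi|$ rather than $\phi$. One way around this: first prove the weak inequality $h_{x_0,u_1}(x,t)\le h_{x_0,u_2}(x,t)$ by a symmetric two-sided estimate giving $|\phi(t)|\le|u_2-u_1|e^{\lambda t}$ combined with a continuity/connectedness argument in $u_2$ (the set of $u_2>u_1$ for which $\phi\ge 0$ on $(0,t]$ is nonempty, open, and closed), and only then upgrade to the strict inequality via the lower Gronwall bound $\phi(s)\ge(u_2-u_1)e^{-\lambda s}$. A secondary technical point is the measurability/integrability of $\tau\mapsto\phi(\tau)$ along the Lipschitz minimizer $\gm$, which follows from continuity of $h_{x_0,u_i}$ on $M\times(0,+\infty)$ (Proposition \ref{VP}) and Lipschitz continuity of $\gm$; this is routine. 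I would also double-check that the Markov property is stated in a form strong enough to justify restricting \eqref{iaf} to subintervals along a fixed minimizer — if not, one can instead work directly with the full-interval inequality at time $t$ and iterate over a partition, which produces the same exponential bound in the limit.
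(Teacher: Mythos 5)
Your setup is right---compare along a minimizer $\gamma$ of $h_{x_0,u_2}(x,t)$, use the same $\gamma$ as a competitor for $h_{x_0,u_1}$, invoke (L3), and close with Gronwall---and you have correctly isolated the real difficulty: the integral inequality involves $|\phi|$, so Gronwall does not close until the sign of $\phi$ is known. But the mechanism you propose for settling that sign does not work. The connectedness argument in $u_2$ fails at openness (and at nonemptiness): knowing only the weak inequality $\phi\ge 0$ for some $u_2$ gives no uniform gap, so nothing prevents a small perturbation of $u_2$ from destroying it, and there is no $u_2$ for which $\phi\ge0$ is known a priori. The ``symmetric two-sided estimate'' $|\phi(t)|\le|u_2-u_1|e^{\lambda t}$ is also not available along a single curve: the variational principle gives an \emph{equality} for $h_{x_0,u_2}$ along its own minimizer $\gamma$ but only an \emph{upper} bound for $h_{x_0,u_1}$ along $\gamma$, hence only a lower bound for $\phi$, never an upper one. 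Finally, the quantitative bound $\phi(s)\ge(u_2-u_1)e^{-\lambda s}$ does not follow from the single inequality $\phi(s)\ge(u_2-u_1)-\lambda\int_0^s\phi(\tau)\,d\tau$ (a large early $\phi$ renders the right-hand side vacuous); it requires the subinterval, i.e.\ differential, form $\dot\phi\ge-\lambda\phi$ a.e.

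The repair is the contradiction/first-zero argument that this paper uses for the exactly analogous statements (Propositions \ref{Mono II}, \ref{pr3.2} and \ref{pr4.1}; the present proposition is itself quoted from \cite{WWY} without proof here). Set $\phi(s)=h_{x_0,u_2}(\gamma(s),s)-h_{x_0,u_1}(\gamma(s),s)$, extended continuously by $\phi(0)=u_2-u_1>0$. For all $0\le s_1<s_2\le t$ one has the equality for $h_{x_0,u_2}$ on $[s_1,s_2]$ (restriction of the minimizer) and the competitor inequality for $h_{x_0,u_1}$, whence $\phi(s_2)\ge\phi(s_1)-\lambda\int_{s_1}^{s_2}|\phi(\tau)|\,d\tau$. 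If $\phi$ vanished somewhere in $(0,t]$, let $s_0$ be its first zero; on $[0,s_0]$ one has $\phi\ge0$, so taking $s_2=s_0$ gives $\phi(s)\le\lambda\int_s^{s_0}\phi(\tau)\,d\tau$, and the time-reversed Gronwall inequality (substitute $G(\sigma)=\phi(s_0-\sigma)$) forces $\phi\equiv0$ on $[0,s_0]$, contradicting $\phi(0)>0$. Hence $\phi>0$ on all of $(0,t]$, which is the claim; with $\phi\ge0$ established, the subinterval inequality does read $\dot\phi\ge-\lambda\phi$ a.e., and your exponential lower bound follows as a bonus.
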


\begin{proposition}[Markov property \cite{WWY}]\label{Markov}
	Given $x_0\in M$ and $u_0\in\mathbf{R}$, we have	
	\[
	h_{x_0,u_0}(x,t+s)=\inf_{y\in M}h_{y,h_{x_0,u_0}(y,t)}(x,s)
	\]
	for all  $s$, $t>0$ and all $x\in M$. Moreover, the infimum is attained at $y$ if and only if there exists a minimizer $\gamma$ of $h_{x_0,u_0}(x,t+s)$ with $\gamma(t)=y$.
\end{proposition}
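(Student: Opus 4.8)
The plan is to deduce the Markov property from the dynamic-programming structure implicit in Proposition \ref{VP}, after first recasting $h_{x_0,u_0}$ in a form where its self-referential character is confined to a scalar ordinary differential equation. For an absolutely continuous curve $\gamma\in C^{ac}([0,t],M)$ and a number $v_0\in\mathbf{R}$, let $v^\gamma_{v_0}:[0,t]\to\mathbf{R}$ denote the unique solution of the Carath\'eodory equation $\dot v(\tau)=L(\gamma(\tau),v(\tau),\dot\gamma(\tau))$, $v(0)=v_0$; uniqueness comes from (L3) by a Gronwall estimate. Reading the third equation of (\ref{che}) through the Legendre duality $\langle\dot x,p\rangle-H(x,u,p)=L(x,u,\dot x)$ along characteristics, the implicit variational principle amounts to the intrinsic representation
\[
h_{x_0,u_0}(x,t)=\min\bigl\{v^\gamma_{u_0}(t):\gamma\in C^{ac}([0,t],M),\ \gamma(0)=x_0,\ \gamma(t)=x\bigr\},
\]
the minimum being attained precisely on the minimizers of $h_{x_0,u_0}(x,t)$. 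I would record two soft facts about the flow $v_0\mapsto v^\gamma_{v_0}$: (i) \emph{cocycle property}: if $\gamma$ is the concatenation at $\gamma_1(t)=\gamma_2(0)$ of $\gamma_1\in C^{ac}([0,t],M)$ and $\gamma_2\in C^{ac}([0,s],M)$, then $v^\gamma_{v_0}$ equals $v^{\gamma_1}_{v_0}$ on $[0,t]$ and $v^\gamma_{v_0}(t+\sigma)=v^{\gamma_2}_{v^\gamma_{v_0}(t)}(\sigma)$ for $\sigma\in[0,s]$; (ii) \emph{order preservation}: $a<b$ implies $v^\gamma_a(\sigma)<v^\gamma_b(\sigma)$ for all $\sigma$, since by uniqueness two solutions cannot cross. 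Note that (ii) is monotonicity of the scalar flow, not of $L$ in $u$, so no comparison principle for $H$ is invoked — consistent with the warning in the introduction that such principles may fail.

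With this in hand, ``$\leq$'' is a concatenation argument. Fix $y\in M$ and set $w:=h_{x_0,u_0}(y,t)$. The intrinsic representation supplies a minimizer $\gamma_1$ from $x_0$ to $y$ in time $t$ with $v^{\gamma_1}_{u_0}(t)=w$, and a minimizer $\gamma_2$ from $y$ to $x$ in time $s$ with $v^{\gamma_2}_{w}(s)=h_{y,w}(x,s)$; let $\gamma$ be their concatenation. By the cocycle property $v^\gamma_{u_0}(t)=w$, hence $v^\gamma_{u_0}(t+s)=v^{\gamma_2}_{w}(s)=h_{y,w}(x,s)$, and the intrinsic representation (with $\gamma$ as a competitor) gives $h_{x_0,u_0}(x,t+s)\leq h_{y,h_{x_0,u_0}(y,t)}(x,s)$; take the infimum over $y$. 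For ``$\geq$'' and the attainment claim, let $\Gamma:[0,t+s]\to M$ be a minimizer of $h_{x_0,u_0}(x,t+s)$ (which exists by Proposition \ref{VP}), put $y:=\Gamma(t)$ and $\xi(\sigma):=\Gamma(t+\sigma)$ on $[0,s]$. Since $\Gamma|_{[0,t]}$ competes for $h_{x_0,u_0}(y,t)$, the intrinsic representation gives $w:=h_{x_0,u_0}(y,t)\leq v^\Gamma_{u_0}(t)=:w'$. Then order preservation and the cocycle property yield $v^{\xi}_{w}(s)\leq v^{\xi}_{w'}(s)=v^\Gamma_{u_0}(t+s)=h_{x_0,u_0}(x,t+s)$, while the intrinsic representation for $h_{y,w}(x,s)$ (with $\xi$ as a competitor) gives $h_{y,h_{x_0,u_0}(y,t)}(x,s)=h_{y,w}(x,s)\leq v^{\xi}_{w}(s)\leq h_{x_0,u_0}(x,t+s)$. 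Combined with ``$\leq$'' this is the identity, and it exhibits $y=\Gamma(t)$ as a point where the infimum is attained. Conversely, if the infimum is attained at some $y$, i.e. $h_{y,w}(x,s)=h_{x_0,u_0}(x,t+s)$ with $w=h_{x_0,u_0}(y,t)$, concatenating the minimizers $\gamma_1$ of $h_{x_0,u_0}(y,t)$ and $\gamma_2$ of $h_{y,w}(x,s)$ produces, via the cocycle property, a curve from $x_0$ to $x$ whose $v^{\cdot}_{u_0}$-value at time $t+s$ equals $h_{y,w}(x,s)=h_{x_0,u_0}(x,t+s)$; by the intrinsic representation this curve is a minimizer of $h_{x_0,u_0}(x,t+s)$ passing through $y$ at time $t$.

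The main obstacle is the first step, not the last two: one must make precise that the implicitly defined $h_{x_0,u_0}$ of Proposition \ref{VP} genuinely coincides with the infimum of Carath\'eodory-ODE endpoint values and that this infimum is attained for every endpoint and every positive time. This is exactly where the hypotheses do their work — (L3) for well-posedness of the scalar equation $\dot v=L(\gamma,v,\dot\gamma)$ and hence for the very meaning of $v^\gamma_{v_0}$, and (L1)--(L2) (strict convexity and superlinearity) for the compactness of minimizing sequences that delivers a minimizing curve. This is established in \cite{WWY}; once it is available, the Markov property is a formal consequence of the cocycle and order-preserving properties of a one-dimensional flow, with no further appeal to the structure of $H$.
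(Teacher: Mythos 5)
The paper itself does not prove Proposition \ref{Markov} (it quotes it from \cite{WWY}), so your argument has to stand on what Proposition \ref{VP} and the standing hypotheses supply. Granting your ``intrinsic representation'' $h_{x_0,u_0}(x,t)=\min_{\gamma}v^{\gamma}_{u_0}(t)$, with the minimum attained exactly on the minimizers of (\ref{iaf}), the remainder of your argument is correct and clean: the cocycle and order-preservation properties of the scalar flow are immediate from uniqueness for $\dot v=L(\gamma,v,\dot\gamma)$ under (L3), and the two concatenation/restriction arguments do give both inequalities and both halves of the attainment claim. (A minor technical caveat: for merely absolutely continuous $\gamma$ the Carath\'eodory solution $v^{\gamma}_{u_0}$ need not exist globally unless $s\mapsto L(\gamma(s),0,\dot\gamma(s))$ is integrable; this is harmless if you work with Lipschitz competitors, as the paper does.)

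The genuine gap is the first step, which you flag as ``the main obstacle'' and then dispose of by asserting it ``is established in \cite{WWY}.'' It is not: Proposition \ref{VP}, which is all that is imported here, gives only one half of your representation, namely that along a \emph{minimizer} $\gamma$ the function $u(s)=h_{x_0,u_0}(\gamma(s),s)$ solves $\dot u=L(\gamma,u,\dot\gamma)$ with $u(0^{+})=u_0$, so $h_{x_0,u_0}(x,t)=v^{\gamma}_{u_0}(t)$ for minimizers. Your ``$\geq$'' direction also requires the converse half, $h_{x_0,u_0}(x,t)\leq v^{\gamma}_{u_0}(t)$ for \emph{every} admissible curve (you use it with $\xi=\Gamma(t+\cdot)$ as a competitor for $h_{y,w}(x,s)$), and the ``only if'' half of the attainment statement requires in addition that any curve achieving $v^{\gamma}_{u_0}(t)=h_{x_0,u_0}(x,t)$ is a minimizer of the implicit functional (\ref{iaf}); neither of these is contained in the implicit variational principle, and the Herglotz-type identity you invoke is a separate lemma (it appears in later work of the same authors, not in the result quoted here). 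Proving it is exactly where the contact structure enters: one needs the domination inequality $h_{x_0,u_0}(\gamma(s_2),s_2)\leq h_{x_0,u_0}(\gamma(s_1),s_1)+\int_{s_1}^{s_2}L(\gamma,h_{x_0,u_0}(\gamma(\tau),\tau),\dot\gamma)\,d\tau$ along arbitrary curves (obtained by concatenating a calibrated minimizer with $\gamma$), followed by a Gronwall comparison with constant $\lambda$, i.e.\ precisely the kind of argument used in Propositions \ref{Mono II}, \ref{pr3.2} and \ref{pr4.1} of this paper. So the proposal is repairable along the lines you indicate, but as written the key lemma is missing, and it carries essentially the same analytic weight as a direct proof of the Markov property itself.
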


\noindent\emph{A priori compactness estimate}.

Given  $a$, $b$, $\delta$, $T\in\mathbf{R}$ with  $a<b$, $0<\delta<T$, recall that
\[
\Omega_{a,b,\delta,T}=M\times [a,b]\times M\times [\delta,T].
\]
\begin{lemma}[A priori compactness]\label{lem2.1}
	For any given $a$, $b$, $\delta$, $T\in\mathbf{R}$ with  $a<b$, $0<\delta<T$, there exists a compact set $\mathcal{K}:= \mathcal{K}_{a,b,\delta,T}\subset T^*M\times\mathbf{R}$ such that for any $(x_0,u_0,x,t)\in\Omega_{a,b,\delta,T}$ and any minimizer $\gamma(s)$ of $h_{x_0,u_0}(x,t)$, we have
	\[
	(\gamma(s),u(s),p(s))\subset\mathcal{K}, \quad \forall s\in[0,t],
	\]
	where $u(s)=h_{x_0,u_0}(\gamma(s),s)$, $p(s)=\frac{\partial L}{\partial \dot{x}}(\gamma(s),u(s),\dot{\gamma}(s))$ and $\mathcal{K}$ depends only on $a$, $b$, $\delta$ and $T$.
\end{lemma}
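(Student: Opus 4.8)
The plan is to establish the a priori compactness estimate in two stages: first control the Lipschitz constant of the minimizing curves $\gamma$, then control the adjoint variables $u(s)$ and $p(s)$ along them. I would begin by obtaining a uniform bound on the action values $h_{x_0,u_0}(x,t)$ over $\Omega_{a,b,\delta,T}$. An upper bound comes from plugging into (\ref{iaf}) a fixed convenient competitor curve, e.g.\ a geodesic from $x_0$ to $x$ traversed at constant speed $d(x_0,x)/t$ on $[0,t]$; since $M$ is compact, $d(x_0,x)\le\mathrm{diam}(M)$ and $t\ge\delta$, so the speeds are bounded, and using the continuity of $L$ together with the Lipschitz dependence in the $u$-slot (property (L3)) plus a Gr\"onwall argument applied to the integral identity (\ref{iaf}) gives $h_{x_0,u_0}(x,t)\le C_1$ for a constant depending only on $a,b,\delta,T$. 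A matching lower bound follows from superlinearity (L2): since $L(x,u,\dot x)\ge -C_2$ whenever $u$ stays in a bounded range, and the self-referential nature of (\ref{iaf}) together with (L3) confines $h_{x_0,u_0}(\gamma(\tau),\tau)$ to a bounded interval, we get $h_{x_0,u_0}(x,t)\ge -C_1'$. Hence $|u(s)|=|h_{x_0,u_0}(\gamma(s),s)|$ is uniformly bounded, say $|u(s)|\le U_0$, for all $s\in[0,t]$ and all data in $\Omega_{a,b,\delta,T}$; this already handles the $u$-coordinate of $\mathcal K$.

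Next I would bound the speed of the minimizer. Fix a minimizer $\gamma$ on $[0,t]$ and pick any subinterval. Comparing $\gamma$ with the constant-speed geodesic reparametrization between its endpoints over the same subinterval, and using the Markov property (Proposition \ref{Markov}) so that the restriction of $\gamma$ is itself a minimizer of the corresponding implicit action function, one derives from the minimality that $\int L(\gamma,u,\dot\gamma)\,d\tau$ cannot exceed the action of the competitor. Since $u$ is already confined to $[-U_0,U_0]$, superlinearity (L2) of $L$ in $\dot x$ restricted to that $u$-range gives a coercive lower bound $L(x,u,v)\ge \theta(\|v\|)-C_3$ with $\theta$ superlinear, while the geodesic competitor has action bounded above in terms of its (bounded) speed. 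A standard argument then yields a uniform bound $\|\dot\gamma(s)\|\le A_0$ for a.e.\ $s$, with $A_0$ depending only on $a,b,\delta,T$; in particular the minimizers are uniformly Lipschitz. Finally, $p(s)=\frac{\partial L}{\partial\dot x}(\gamma(s),u(s),\dot\gamma(s))$ is a continuous function of $(\gamma(s),u(s),\dot\gamma(s))$, all three of which now range in a compact set, so $\|p(s)\|\le P_0$ uniformly. Taking $\mathcal K:=\mathcal K_{a,b,\delta,T}$ to be the closure of the set of all triples $(\gamma(s),u(s),p(s))$ arising this way — contained in the compact box $M\times[-U_0,U_0]\times\{\|p\|\le P_0\}$ — finishes the proof.

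The main obstacle I anticipate is the self-referential structure of (\ref{iaf}): the Lagrangian is evaluated along $L(\gamma(\tau),h_{x_0,u_0}(\gamma(\tau),\tau),\dot\gamma(\tau))$, so the ``$u$'' entering $L$ is not a free parameter but the action function itself, and one cannot bound the action without simultaneously bounding that $u$-value. The way around this is the Gr\"onwall / bootstrap device enabled by the uniform Lipschitz bound (L3): writing $\phi(\tau):=h_{x_0,u_0}(\gamma(\tau),\tau)$ along a minimizer and differentiating (or using the integral form directly), one gets $|\dot\phi(\tau)|\le |L(\gamma,0,\dot\gamma)|+\lambda|\phi(\tau)|$, and since the competitor-based estimate controls $L(\gamma,0,\dot\gamma)$ in $L^1$, Gr\"onwall's inequality closes the loop and bounds $\phi$ on $[0,t]$ uniformly. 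A secondary technical point is ensuring that all comparison competitors actually satisfy the right boundary conditions and that the Markov property is applied legitimately to subarcs; both are routine given Propositions \ref{Mono I} and \ref{Markov}. Once these bounds are in place the compactness of $\mathcal K$ is immediate.
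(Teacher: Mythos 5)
Your first stage (bounding $|u(s)|$) follows essentially the paper's route in the Appendix (Lemmas \ref{lemA} and \ref{lemB}): a geodesic competitor for the upper bound, superlinearity for the lower bound, and the one-sided differential inequalities $\dot u\ge B-\lambda|u|$, $\dot v\le A+\lambda|v|$ coming from (L2)--(L3), closed by ODE comparison. One caveat: the endpoint bound $|h_{x_0,u_0}(x,t)|\le C_1$ on $\Omega_{a,b,\delta,T}$ does not directly give $|u(s)|\le U_0$ for $s\in(0,\delta)$, since $(x_0,u_0,\gamma(s),s)$ then leaves $\Omega_{a,b,\delta,T}$; the paper devotes a separate lemma to propagating the bound along the whole minimizer, using that the \emph{terminal} value $u(t)$ is controlled and that $\dot u\ge B-\lambda u$ prevents $u$ from having been much larger at earlier times. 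This is fixable along the lines you sketch, so I would call it a rigor issue rather than a gap.

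The genuine gap is in your second stage, the pointwise bound $\|\dot\gamma(s)\|\le A_0$. Comparing the restriction of $\gamma$ to a subinterval $[s_1,s_2]$ with the constant-speed geodesic between $\gamma(s_1)$ and $\gamma(s_2)$ bounds $\int_{s_1}^{s_2}L(\gamma,u,\dot\gamma)\,d\tau$ by the competitor's action, but the competitor's speed is $d(\gamma(s_1),\gamma(s_2))/(s_2-s_1)$ --- controlled only by the \emph{average speed of $\gamma$ itself} on that subinterval, which is exactly what you are trying to bound; for short subintervals it is not bounded by $\mathrm{diam}(M)/\delta$. So superlinearity yields only integral bounds over subintervals of length bounded below, i.e.\ the existence of \emph{one} time $s_0$ with $\|\dot\gamma(s_0)\|\le Q$, not an a.e.\ bound. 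The missing idea --- the heart of the paper's proof --- is that $(\gamma,u,p)$ solves the contact Hamilton equations (\ref{che}), along which
\[
\frac{d}{ds}H(\gamma(s),u(s),p(s))=-H(\gamma(s),u(s),p(s))\,\frac{\partial H}{\partial u}(\gamma(s),u(s),p(s)),
\]
so that (H3) gives $|H(s)|\le|H(s_0)|e^{\lambda T}$. This is the contact substitute for conservation of energy: it propagates the bound from the single good time $s_0$ to all of $[0,t]$, and superlinearity (H2) of $H$ in $p$ then bounds $\|p(s)\|$, hence $\|\dot\gamma(s)\|$, everywhere. Without this step (or a substantive replacement, such as a Lipschitz-regularity theorem for minimizers of the implicit functional, which is not ``standard'' in this time-dependent, self-referential setting), your argument does not close.
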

We give the proof of Lemma \ref{lem2.1} in  Appendix.

\noindent\emph{Variational solutions}.

In Section 4 we will show that a variational solution of equation (\ref{ehj}) is a viscosity solution. The definition of the variational solution is as follows.
\begin{definition}\label{nw}
	Let $T>0$. A function $u: M\times [0,T]\rightarrow\mathbf{R}$ is called a variational solution of equation (\ref{ehj}) if
	\begin{itemize}
		\item [(i)] for each continuous and piecewise $C^1$ curve $\gamma:[t_1,t_2]\rightarrow M$ with $0\leq t_1<t_2\leq T$, we have
		\[
		u(\gamma(t_2),t_2)-u(\gamma(t_1),t_1)\leq\int_{t_1}^{t_2}L(\gamma(s),u(\gamma(s),s),\dot{\gamma}(s))ds;
		\]
		\item [(ii)] for each $[t_1,t_2]\subset[0,T]$ and each $x\in M$, there exists a $C^1$ curve $\gamma:[t_1,t_2]\rightarrow M$ with $\gamma(t_2)=x$ such that
		\[
		u(x,t_2)-u(\gamma(t_1),t_1)=\int_{t_1}^{t_2}L(\gamma(s),u(\gamma(s),s),\dot{\gamma}(s))ds.
		\]
	\end{itemize}
\end{definition}

\section{Implicit action functions}
In this part, we discuss some fundamental properties of the implicit action function, which are crucial for the proofs of the main results.

\subsection{Monotonicity and minimality}
\begin{proposition}[Monotonicity property II]\label{Mono II}
	Given $L_1$, $L_2$ satisfying (L1)-(L3), $x_0\in M$ and $u_0\in\mathbf{R}$,
	if $L_1<L_2$, then $h^{L_1}_{x_0,u_0}(x,t)<h^{L_2}_{x_0,u_0}(x,t)$ for all $(x,t)\in M\times  (0,+\infty)$,
	where $h^{L_i}_{x_0,u_0}(x,t)$ denotes the implicit action function associated with $L_i$, $i=1,2$.
\end{proposition}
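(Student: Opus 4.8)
The plan is to exploit the fixed‑point characterization \eqref{iaf} of the implicit action function together with the strict monotonicity already available from Proposition \ref{Mono I}. Write $h_i:=h^{L_i}_{x_0,u_0}$ for $i=1,2$. Each $h_i$ satisfies
\[
h_i(x,t)=u_0+\inf_{\substack{\gamma(t)=x\\ \gamma(0)=x_0}}\int_0^t L_i\bigl(\gamma(\tau),h_i(\gamma(\tau),\tau),\dot\gamma(\tau)\bigr)\,d\tau,
\]
with the infimum attained. First I would establish the \emph{non‑strict} inequality $h_1(x,t)\le h_2(x,t)$ on $M\times(0,+\infty)$. The natural route is a Gronwall‑type comparison argument: fix $(x,t)$, let $\gamma_2$ be a minimizer for $h_2(x,t)$, and consider the function $\phi(\tau):=h_1(\gamma_2(\tau),\tau)-h_2(\gamma_2(\tau),\tau)$ (or rather compare $h_1(\gamma_2(\tau),\tau)$ with the action of $\gamma_2$ computed for $L_1$ at the value $h_1$). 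Using that $h_1$ is itself bounded above along $\gamma_2$ by $u_0+\int_0^\tau L_1(\gamma_2,h_1(\gamma_2,s),\dot\gamma_2)\,ds$ (a consequence of \eqref{iaf}, since $\gamma_2|_{[0,\tau]}$ is an admissible competitor for $h_1(\gamma_2(\tau),\tau)$), together with $L_1<L_2$ pointwise and the uniform Lipschitz bound (L3), one gets a differential inequality of the form $\phi(\tau)\le \int_0^\tau \lambda\,|\phi(s)|\,ds + (\text{something}\le 0)$, whence $\phi\le 0$, i.e. $h_1(x,t)\le h_2(x,t)$.

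Next I would upgrade this to the strict inequality. Here is where Proposition \ref{Mono I} enters. Since $L_1<L_2$ on the compact set of relevant $(x,u,\dot x)$ values (using the a priori compactness Lemma \ref{lem2.1} to restrict attention to a compact $\mathcal K$), there is $\eta>0$ with $L_1\le L_2-\eta$ on $\mathcal K$, at least after shrinking $\eta$ on each slab $[a,b]$ of $u$‑values; more carefully, one argues locally in $t$. Take a minimizer $\gamma_1$ for $h_1(x,t)$. Then
\[
h_2(x,t)\ \ge\ u_0+\int_0^t L_2\bigl(\gamma_1,h_2(\gamma_1,s),\dot\gamma_1\bigr)\,ds,
\]
because $\gamma_1$ is admissible for $h_2(x,t)$ and the right side is the action (not the infimum). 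Comparing this with $h_1(x,t)=u_0+\int_0^t L_1(\gamma_1,h_1(\gamma_1,s),\dot\gamma_1)\,ds$ and using $h_1\le h_2$ already proved, plus $L_1\le L_2-\eta$ along the (compactly contained) minimizing curve, yields $h_2(x,t)-h_1(x,t)\ge \eta t - \lambda\int_0^t\bigl(h_2(\gamma_1,s)-h_1(\gamma_1,s)\bigr)ds\ge \eta t - \lambda\int_0^t\bigl(h_2-h_1\bigr)(\gamma_1(s),s)\,ds$; a Gronwall estimate in the reverse direction then forces $h_2(x,t)-h_1(x,t)>0$ for $t>0$.

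The main obstacle I anticipate is handling the implicit (self‑referential) dependence of the action on $h_i$ through the $u$‑slot of $L_i$: one cannot simply plug one minimizer into the other functional, because the value at which $L$ is evaluated also changes. The Lipschitz hypothesis (L3) is exactly what tames this — it converts the mismatch into a term controlled by $\lambda\int_0^t |h_2-h_1|$, making Gronwall applicable — but one must be careful to (i) use Lemma \ref{lem2.1} to keep all curves and the values $h_i(\gamma(s),s)$ in a fixed compact set so that $L_1<L_2$ gives a uniform gap $\eta$, and (ii) verify that the competitor curves used are admissible (satisfy the endpoint constraints $\gamma(0)=x_0$, $\gamma(t)=x$), which they are by construction. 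A secondary, minor point is the behavior as $t\to 0^+$: both sides tend to $u_0$, so strictness can only be claimed for $t>0$, consistent with the statement. Once these bookkeeping issues are dispatched, the two Gronwall estimates close the argument.
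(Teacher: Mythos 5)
Your overall strategy (comparison along a minimizer plus Gronwall, first proving $\le$ and then upgrading to $<$) is close in spirit to the paper's argument, but as written the second step contains a genuine direction error. Writing $h_i:=h^{L_i}_{x_0,u_0}$, the inequality you assert,
\[
h_2(x,t)\ \ge\ u_0+\int_0^t L_2\bigl(\gamma_1(s),h_2(\gamma_1(s),s),\dot\gamma_1(s)\bigr)\,ds ,
\]
is backwards: since $\gamma_1$ is an admissible competitor in the infimum defining $h_2(x,t)$, the variational characterization gives exactly the reverse, $h_2(x,t)\le u_0+\int_0^t L_2(\gamma_1,h_2,\dot\gamma_1)\,ds$ — plugging a particular curve into the action can only bound the infimum from \emph{above}. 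Consequently the chain $h_2-h_1\ge\eta t-\lambda\int_0^t(h_2-h_1)$ does not follow from what you wrote. To obtain a lower bound on $h_2-h_1$ you must work along a minimizer $\gamma_2$ of $h_2$, so that $h_2$ is computed exactly while $h_1$ is estimated from above along the same curve; this is precisely the choice the paper makes throughout. A secondary gap is in your first step: from $\phi(\tau)\le\lambda\int_0^\tau|\phi(s)|\,ds$ one cannot conclude $\phi\le0$ directly, because the absolute value is not controlled where $\phi$ is negative. One needs a first-crossing argument (restrict to a maximal interval on which $\phi\ge0$, starting at a zero of $\phi$, where $|\phi|=\phi$ and Gronwall closes); this is exactly the content of the paper's case analysis on the sign of $F(s_0)$ and its choice of the crossing times $s_1$, $s_2$.

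For comparison, the paper runs a single contradiction argument: assuming $h_1(x,t)\ge h_2(x,t)$, it sets $F=h_2-h_1$ along a minimizer $\gamma_2$ of $h_2$, rules out $F\equiv0$ by observing that equality along the whole curve together with $L_1<L_2$ inside the action integral contradicts the infimum characterization of $h_1$ (so no uniform gap $\eta$ on a compact set, and hence no appeal to Lemma \ref{lem2.1}, is needed here — strict positivity of $\int(L_2-L_1)$ over a nondegenerate interval suffices), and then excludes both possible signs of $F$ at an interior time by forward and backward Gronwall estimates between consecutive zeros. If you repair the direction of your key inequality by switching to $\gamma_2$ and insert the crossing-time argument, your two-stage plan does go through and becomes essentially a reorganization of the paper's proof.
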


\begin{proof}
	Assume by contradiction that there exists $(x,t)\in M\times(0,+\infty)$ such that $h^{L_1}_{x_0,u_0}(x,t)\geq h^{L_2}_{x_0,u_0}(x,t)$.
	Let $\gamma_2:[0,t]\to M$ be a minimizer of $h^{L_2}_{x_0,u_0}(x,t)$.

	Let $F(s)=h^{L_2}_{x_0,u_0}(\gamma_2(s),s)-h^{L_1}_{x_0,u_0}(\gamma_2(s),s)$ for $s\in(0,t]$.
	From Lemma 3.1 and Lemma 3.2 in \cite{WWY}, we have
	\[
	\lim_{s\rightarrow 0^+}h^{L_2}_{x_0,u_0}(\gamma_2(s),s)=\lim_{s\rightarrow 0^+}h^{L_1}_{x_0,u_0}(\gamma_2(s),s)=u_0.
	\]
	Let $F(0)=0$. Then, $F(s)$ is a continuous function on $[0,t]$ and  $F(t)\leq 0$.

	Note that there exists $s_0\in (0,t)$ such that $F(s_0)\neq 0$. Otherwise, from the continuity of $F$, for any $s\in [0,t]$, we have $F(s)\equiv 0$, i.e., $h^{L_1}_{x_0,u_0}(\gamma_2(s),s)\equiv h^{L_2}_{x_0,u_0}(\gamma_2(s),s)$, it follows from $L_1<L_2$ that
	\begin{align*}
	h^{L_1}_{x_0,u_0}(x,t)=h^{L_2}_{x_0,u_0}(x,t)
	&=u_0+\int_0^{t}L_2(\gamma_2(\tau),h^{L_2}_{x_0,u_0}(\gamma_2(\tau),\tau),\dot{\gamma}_2(\tau))d\tau\\
	&=u_0+\int_0^{t}L_2(\gamma_2(\tau),h^{L_1}_{x_0,u_0}(\gamma_2(\tau),\tau),\dot{\gamma}_2(\tau))d\tau\\
	&>u_0+\int_0^{t}L_1(\gamma_2(\tau),h^{L_1}_{x_0,u_0}(\gamma_2(\tau),\tau),\dot{\gamma}_2(\tau))d\tau,\\
	\end{align*}
	which contradicts
	\[
	h^{L_1}_{x_0,u_0}(x,t)\leq u_0+\int_0^{t}L_1(\gamma_2(\tau),h^{L_1}_{x_0,u_0}(\gamma_2(\tau),\tau),\dot{\gamma}_2(\tau))d\tau.
	\]
	Hence $F(s_0)\neq 0$.

	If  $F(s_0)>0$, in view of $F(t)\leq 0$, there exists $s_1\in (s_0,t]$ such that $F(s_1)=0$ and $F(s)\geq 0$ for $s\in [s_0,s_1]$.
	Since $\gamma_2$ is a minimizer of $h^{L_2}_{x_0,u_0}(x,t)$, then for each $s\in(s_0,s_1)$ we have
	\[
	h^{L_2}_{x_0,u_0}(\gamma_2(s_1),s_1)=h^{L_2}_{x_0,u_0}(\gamma_2(s),s)+\int_s^{s_1}L_2(\gamma_2(\tau),h^{L_2}_{x_0,u_0}(\gamma_2(\tau),\tau),\dot{\gamma}_2(\tau))d\tau,
	\]
	and
	\begin{align*}
	h^{L_1}_{x_0,u_0}(\gamma_2(s_1),s_1)&\leq h^{L_1}_{x_0,u_0}(\gamma_2(s),s)+\int_s^{s_1}L_1(\gamma_2(\tau),h^{L_1}_{x_0,u_0}(\gamma_2(\tau),\tau),\dot{\gamma}_2(\tau))d\tau\\
	&\leq h^{L_1}_{x_0,u_0}(\gamma_2(s),s)+\int_s^{s_1}L_2(\gamma_2(\tau),h^{L_1}_{x_0,u_0}(\gamma_2(\tau),\tau),\dot{\gamma}_2(\tau))d\tau.
	\end{align*}
	Thus,  by (H3) we get $F(s_1)\geq F(s)-\lambda\int_s^{s_1}F(\tau)d\tau$, which together with $F(s_1)=0$ implies
	\[
	F(s)\leq\lambda\int_s^{s_1}F(\tau)d\tau.
	\]
	Let $G(\sigma):=F(s_1-\sigma)$ for $\sigma\in[0,s_1-s_0]$.
	It follows that $G(0)=0$, $G(\sigma)>0$ for $\sigma\in(0,s_1-s_0)$, and
	\[
	G(s_1-s)\leq\lambda\int_{0}^{s_1-s}G(\sigma)d\sigma, s\in[s_0,s_1].
	\]
	By Gronwall inequality, $F(s)=G(s_1-s)\equiv 0$, $\forall s\in[s_0,s_1]$, which contradicts $F(s_0)>0$.

	It remains to exclude the case $F(s_0)<0$. Let $H(s)=-F(s)=h^{L_1}_{x_0,u_0}(\gamma_2(s),s)-h^{L_2}_{x_0,u_0}(\gamma_2(s),s)$, $s\in(0,s_0]$. Then  $H(s)$ is a continuous function on $[0,s_0]$ and $H(0)=0$, $H(s_0)=-F(s_0)>0$. Then there exists $s_2\in (0,s_0]$ such that $H(s_2)=0$ and $H(s)\geq 0$ for $s\in [s_2,s_0]$.
	Since $\gamma_2$ is a minimizer of $h^{L_2}_{x_0,u_0}(x,t)$, we have
	\[
	h^{L_2}_{x_0,u_0}(\gamma_2(s),s)=h^{L_2}_{x_0,u_0}(\gamma_2(s_2),s_2)+\int_{s_2}^{s}L_2(\gamma_2(\tau),h^{L_2}_{x_0,u_0}(\gamma_2(\tau),\tau),\dot{\gamma}_2(\tau))d\tau,
	\]
	and
	\begin{align*}
	h^{L_1}_{x_0,u_0}(\gamma_2(s),s)&\leq h^{L_1}_{x_0,u_0}(\gamma_2(s_2),s_2)+\int_{s_2}^{s}L_1(\gamma_2(\tau),h^{L_1}_{x_0,u_0}(\gamma_2(\tau),\tau),\dot{\gamma}_2(\tau))d\tau\\
	&\leq h^{L_1}_{x_0,u_0}(\gamma_2(s_2),s_2)+\int_{s_2}^{s}L_2(\gamma_2(\tau),h^{L_1}_{x_0,u_0}(\gamma_2(\tau),\tau),\dot{\gamma}_2(\tau))d\tau,
	\end{align*}
	which implies
	\[
	H(s)\leq\lambda\int_{s_2}^{s}H(\tau)d\tau.
	\]
	By Gronwall inequality again, $H(s)\equiv 0$, $\forall s\in[s_2,s_0]$.
	It contradicts $H(s_0)>0$. The proof is now complete.
\end{proof}

\begin{proposition}[Minimizing property]\label{pr3.2}
	Given $x_0$, $x\in M$, $u_0\in\mathbf{R}$ and  $t>0$, let
	\[
	S_{x_0,u_0}^{x,t}=\big\{\mathrm{solutions}\ (x(s),u(s),p(s))\ \mathrm{of}\ (\ref{che})\  \mathrm{on}\ [0,t]:  x(0)=x_0,\ x(t)=x,\ u(0)=u_0\big\}.
	\]
	Then

	\[
	h_{x_0,u_0}(x,t)=\inf\{u(t): (x(s),u(s),p(s))\in S_{x_0,u_0}^{x,t}\}, \quad \forall (x,t)\in M\times(0,+\infty).
	\]
\end{proposition}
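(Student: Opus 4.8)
The plan is to prove the two inequalities separately. For the direction $h_{x_0,u_0}(x,t) \geq \inf\{u(t) : \ldots\}$, I would invoke the Implicit Variational Principle (Proposition \ref{VP}): the infimum in \eqref{iaf} is attained by some Lipschitz minimizer $\gamma$, and setting $x(s) = \gamma(s)$, $u(s) = h_{x_0,u_0}(\gamma(s),s)$, $p(s) = \frac{\partial L}{\partial \dot x}(x(s),u(s),\dot x(s))$ produces a genuine solution of \eqref{che} on $[0,t]$ with $x(0) = x_0$, $x(t) = x$ and $\lim_{s \to 0^+} u(s) = u_0$. One subtlety is that the Implicit Variational Principle as stated only gives $\lim_{s\to 0^+}u(s)=u_0$ rather than $u(0)=u_0$ outright; I would note that the solution extends continuously (indeed the equations \eqref{che} have locally Lipschitz right-hand side in $(x,u,p)$, so solutions extend to the closed interval), hence $(x(s),u(s),p(s))$ lies in $S_{x_0,u_0}^{x,t}$ and $u(t) = h_{x_0,u_0}(\gamma(t),t) = h_{x_0,u_0}(x,t)$, giving $h_{x_0,u_0}(x,t) \geq \inf\{u(t) : \ldots\}$.

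For the reverse inequality $h_{x_0,u_0}(x,t) \leq u(t)$ for every $(x(s),u(s),p(s)) \in S_{x_0,u_0}^{x,t}$, I would use the standard fact relating solutions of the contact Hamilton's equations to the Lagrangian action. Given such a solution, along it one has $\dot u(s) = \langle \dot x(s), p(s)\rangle - H(x(s),u(s),p(s))$, and since $p(s) = \frac{\partial L}{\partial \dot x}(x(s),u(s),\dot x(s))$ by the Legendre duality (this is the content of the Euler–Lagrange/Hamilton correspondence for the contact system, established in \cite{WWY}), the Fenchel equality gives $\langle \dot x(s), p(s)\rangle - H(x(s),u(s),p(s)) = L(x(s),u(s),\dot x(s))$. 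Hence $\dot u(s) = L(x(s),u(s),\dot x(s))$, so integrating,
\[
u(t) = u_0 + \int_0^t L(x(s),u(s),\dot x(s))\,ds.
\]
Now I want to compare this with $h_{x_0,u_0}(x,t)$. The curve $\gamma := x(\cdot)$ is an admissible competitor in \eqref{iaf}, but the definition of $h$ involves $L$ evaluated at $h_{x_0,u_0}(\gamma(s),s)$ along that curve, not at $u(s)$. So set $\phi(s) := u(s) - h_{x_0,u_0}(x(s),s)$; then $\phi(0) = 0$ (using $\lim_{s\to 0^+}h_{x_0,u_0}(x(s),s) = u_0$ from Lemmas 3.1–3.2 of \cite{WWY}), and by the defining inequality for $h$ (the "$\leq$" half of the dynamic programming inequality, i.e. $h_{x_0,u_0}(x(s'),s') \leq h_{x_0,u_0}(x(s),s) + \int_s^{s'} L(x(\tau), h_{x_0,u_0}(x(\tau),\tau), \dot x(\tau))\,d\tau$) together with $\dot u = L(x,u,\dot x)$, one gets
\[
\phi(s') \geq \phi(s) - \int_s^{s'}\bigl[L(x(\tau),h_{x_0,u_0}(x(\tau),\tau),\dot x(\tau)) - L(x(\tau),u(\tau),\dot x(\tau))\bigr]d\tau \geq \phi(s) - \lambda\int_s^{s'}|\phi(\tau)|\,d\tau,
\]
using the Lipschitz bound (L3). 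A Gronwall-type argument — exactly of the style already used in the proof of Proposition \ref{Mono II} above, splitting into the sign cases of $\phi$ — then forces $\phi(s) \geq 0$ for all $s$, in particular $u(t) \geq h_{x_0,u_0}(x,t)$. Taking the infimum over $S_{x_0,u_0}^{x,t}$ yields $h_{x_0,u_0}(x,t) \leq \inf\{u(t):\ldots\}$, and combined with the first part this proves equality.

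The main obstacle I anticipate is the second inequality, specifically handling the discrepancy between $u(s)$ and $h_{x_0,u_0}(x(s),s)$ along the solution: one cannot directly plug the solution curve into \eqref{iaf} because the Lagrangian in \eqref{iaf} is evaluated at the implicit action value, not at the solution's $u$-coordinate. The Gronwall/Lipschitz comparison argument circumvents this, and since the very same technique was just deployed in Proposition \ref{Mono II}, I expect the write-up to be short; the one point requiring care is justifying the one-sided inequality $h_{x_0,u_0}(x(s'),s') \leq h_{x_0,u_0}(x(s),s) + \int_s^{s'} L(\cdots)\,d\tau$ for sub-intervals, which follows from the Markov property (Proposition \ref{Markov}) or directly from the definition of $h$ by concatenating a near-minimizer on $[0,s]$ with the piece of $x(\cdot)$ on $[s,s']$.
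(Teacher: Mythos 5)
Your proposal is correct and follows essentially the same route as the paper: the "$\geq$" direction via the minimizer supplied by Proposition \ref{VP}, and the "$\leq$" direction by comparing $u(s)$ with $h_{x_0,u_0}(x(s),s)$ along an arbitrary solution, using $\dot u=L$ from Legendre duality, the one-sided dynamic programming inequality for $h$, (L3), and Gronwall (the paper phrases this as a contradiction starting from the last zero of the difference, which is exactly your sign-splitting step). Your remark on extending $u$ continuously to $s=0$ is a detail the paper passes over silently but does not change the argument.
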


\begin{proof}
	By Proposition \ref{VP}, there exists a solution of (\ref{che}) $(\hat{x}(t),\hat{u}(t),\hat{p}(t))$ such that  $\hat{u}(t)=h_{x_0,u_0}(x,t)$ and
	\begin{equation*}
	h_{x_0,u_0}(x,t)=u_0+\int_0^tL(\hat{x}(\tau),h_{x_0,u_0}(\hat{x}(\tau),\tau),\dot{\hat{x}}(\tau))d\tau,
	\end{equation*}
	where $\dot{\hat{x}}(\tau):=\frac{\partial H}{\partial p}(\hat{x}(\tau),\hat{u}(\tau),\hat{p}(\tau))$.

	In the following, we will show that for each solution $(x(s),u(s),p(s))\in S_{x_0,u_0}^{x,t}$,  $u(t)\geq h_{x_0,u_0}(x,t)$.
	Assume by contradiction that there exists $(\tilde{x}(s),\tilde{u}(s),\tilde{p}(s))\in S_{x_0,u_0}^{x,t}$ such that
	$\tilde{u}(t)<h_{x_0,u_0}(x,t)$.  Obviously, we have
	\begin{align*}
	\tilde{u}(t)=u_0+\int_0^{t}L(\tilde{x}(\tau),\tilde{u}(\tau),\dot{\tilde{x}}(\tau))d\tau
	\end{align*}
	and
	\begin{align*}
	h_{x_0,u_0}(x,t)\leq u_0+\int_0^{t}L(\tilde{x}(\tau),h_{x_0,u_0}(\tilde{x}(\tau),\tau),\dot{\tilde{x}}(\tau))d\tau.
	\end{align*}
	Set $v(\sigma):=h_{x_0,u_0}(\tilde{x}(\sigma),\sigma)$ for $\sigma\in [0,t]$. In particular, we have $v(t)=h_{x_0,u_0}(\tilde{x}(t),t)$.  Let
	$F(\sigma)=v(\sigma)-\tilde{u}(\sigma)$, where $\sigma\in [0,t]$. By definition, we have $\tilde{u}(0)=u_0$. In view of Lemma 3.1 and Lemma 3.2 in \cite{WWY}, $v(0)=0$.
	Thus we have $F(0)=0$. The assumption $\tilde{u}(t)<h_{x_0,u_0}(x,t)$ implies $F(t)>0$. Hence, there exists $\sigma_0\in [0,t)$ such that $F(\sigma_0)=0$ and  $F(\sigma)> 0$ for $\sigma> \sigma_0$. Moreover, for any $\tau\in (\sigma_0,t]$, we have
	\begin{equation*}
	\tilde{u}(\tau)=\tilde{u}(\sigma_0)+\int_{\sigma_0}^{\tau}L(\tilde{x}(\sigma),\tilde{u}(\sigma),\dot{\tilde{x}}(\sigma))d\sigma
	\end{equation*}
	and
	\begin{equation*}
	v(\tau)\leq v(\sigma_0)+\int_{\sigma_0}^{\tau}L(\tilde{x}(\sigma),v(\sigma),\dot{\tilde{x}}(\sigma))d\sigma.
	\end{equation*}
	Since $v(\sigma_0)-\tilde{u}(\sigma_0)=F(\sigma_0)=0$, a direct calculation implies
	\begin{equation*}
	v(\tau)-\tilde{u}(\tau)\leq \int_{\sigma_0}^\tau\lambda(v(\sigma)-\tilde{u}(\sigma))d\sigma.
	\end{equation*}
	Hence, we have
	\begin{equation*}
	F(\tau)\leq \int_{\sigma_0}^\tau\lambda F(\sigma)d\sigma.
	\end{equation*}
	Using Gronwall inequality, we have  $F(t)=0$, which contradicts $F(t)=\tilde{u}(t)-h_{x_0,u_0}(x,t)<0$.
\end{proof}

\subsection{Local Lipschitz continuity}
Given  $a$, $b$, $\delta$, $T\in\mathbf{R}$ with  $a<b$, $0<\delta<T$, recall
\[
\Omega_{a,b,\delta,T}=M\times [a,b]\times M\times [\delta,T].
\]
The main result of this part is as follows.

\begin{proposition}\label{pr3.3}
	The function $(x_0,u_0,x,t)\mapsto h_{x_0,u_0}(x,t)$ is Lipschitz  continuous on $\Omega_{a,b,\delta,T}$.
\end{proposition}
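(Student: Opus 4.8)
The plan is to establish local Lipschitz continuity in the four variables by combining the a priori compactness estimate (Lemma \ref{lem2.1}) with the variational characterization (\ref{iaf}) and a Gronwall-type argument borrowed from the proofs of Propositions \ref{Mono II} and \ref{pr3.2}. First I would fix the box $\Omega_{a,b,\delta,T}$ and invoke Lemma \ref{lem2.1} to obtain the compact set $\mathcal{K}=\mathcal{K}_{a,b,\delta,T}\subset T^*M\times\mathbf{R}$ containing every minimizer (in the phase-space sense) of $h_{x_0,u_0}(x,t)$ for $(x_0,u_0,x,t)\in\Omega_{a,b,\delta,T}$. From $\mathcal{K}$ I extract uniform bounds: a constant $A$ bounding $\|\dot\gamma(s)\|$ along minimizers (so minimizing curves are uniformly Lipschitz), a constant bounding $|h_{x_0,u_0}(x,t)|$ on the box, and hence — using continuity of $L$ on a compact set — a bound $B$ on $|L(\gamma(s),u(s),\dot\gamma(s))|$ along minimizers. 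These give the "speed limit" that makes all the curve surgeries below legitimate.

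Next I would treat the variables in groups. \emph{(a) Lipschitz in $(x,t)$ for fixed $(x_0,u_0)$:} this is essentially the classical Lax–Oleinik-type estimate. To compare $h_{x_0,u_0}(x,t)$ and $h_{x_0,u_0}(x',t')$, take a minimizer $\gamma$ for the first, reparametrize/concatenate with a short geodesic segment of controlled speed joining $x$ to $x'$ and adjusting the time from $t$ to $t'$, plug the resulting competitor into (\ref{iaf}), and estimate the difference of the action integrals; the $u$-dependence of $L$ is handled by a Gronwall step exactly as in the proof of Proposition \ref{pr3.2} (the map $\sigma\mapsto h_{x_0,u_0}(\gamma(\sigma),\sigma)$ absorbs the implicit dependence, and (L3) with constant $\lambda$ closes the loop). \emph{(b) Lipschitz in $u_0$ for fixed $(x_0,x,t)$:} here I would use the Markov property (Proposition \ref{Markov}) together with Monotonicity I (Proposition \ref{Mono I}), or more directly compare the two implicit action functions $h_{x_0,u_0}$ and $h_{x_0,u_0'}$ along a common minimizer and run the same Gronwall argument as in Proposition \ref{Mono II}: the difference $F(s)=h_{x_0,u_0'}(\gamma(s),s)-h_{x_0,u_0}(\gamma(s),s)$ satisfies $F(0)=u_0'-u_0$ and a Gronwall inequality with constant $\lambda$, yielding $|h_{x_0,u_0'}(x,t)-h_{x_0,u_0}(x,t)|\le e^{\lambda T}|u_0'-u_0|$. \emph{(c) Lipschitz in $x_0$ for fixed $(u_0,x,t)$:} symmetric to (a), perturbing the initial endpoint of a minimizer by a short geodesic segment near $s=0$ and re-estimating; one must check that the perturbed curve still has the right boundary behaviour as $s\to 0^+$, which is why Lemma 3.1/3.2 of \cite{WWY} (the limit $h\to u_0$ as $s\to 0^+$) is needed. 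Finally I would assemble (a)–(c) by the triangle inequality to get a single Lipschitz constant on $\Omega_{a,b,\delta,T}$.

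The main obstacle I anticipate is the uniform control of minimizers near the endpoints, especially near $s=0$ and $s=t$: the curve-surgery arguments in (a) and (c) need the minimizing curves to have uniformly bounded speed \emph{up to} the endpoints and the competitor curves to remain inside a region where Lemma \ref{lem2.1}'s bounds, or at least uniform bounds on $L$, apply. A secondary subtlety is that when perturbing $t$ downward one must ensure $t'$ stays $\ge\delta$ (harmless, since we may shrink $\delta$) and that the rescaling of a minimizer to a slightly different time interval only changes the action by $O(|t-t'|)$ with a constant coming from $A$ and $B$ — this uses superlinearity/convexity (L1), (L2) only through the compactness estimate, so no new input is required. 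Once the endpoint control is in place, every step is a routine Gronwall estimate of the type already carried out in this section, so I would present (b) in full detail as the template and indicate that (a) and (c) follow the same pattern with the extra geodesic-surgery bookkeeping.
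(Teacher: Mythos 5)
Your decomposition (Lipschitz in $(x,t)$ for fixed $(x_0,u_0)$, then in $(x_0,u_0)$ for fixed $(x,t)$, then triangle inequality) matches the paper's, and your treatment of the $(x,t)$-variables is the same final-endpoint geodesic surgery the paper uses in Lemma \ref{lem3.1}. Where you genuinely diverge is the $(x_0,u_0)$-dependence. The paper's Lemma \ref{lem3.2} does not do curve surgery or Gronwall there at all: it takes the minimizing initial condition $(x_1,u_1,p_1)$ of $h_{x_1,u_1}(x,t)$, flows the contact Hamilton's equations from the perturbed datum $(x_2,u_2,p_1)$, uses $C^1$-dependence of the flow on initial conditions (on the compact set from Lemma \ref{lem2.1}) to control $x_2(t)$ and $u_2(t)$, then invokes the Minimizing property (Proposition \ref{pr3.2}) to get $h_{x_2,u_2}(x_2(t),t)\leq u_2(t)$ and Lemma \ref{lem3.1} to move the endpoint from $x_2(t)$ back to $x$. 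This handles $x_0$ and $u_0$ in one stroke and entirely sidesteps the boundary analysis at $s\to 0^+$. Your route buys something in return: for the $u_0$-dependence, the Gronwall comparison along a common minimizer is purely variational, shorter, and yields the explicit constant $e^{\lambda T}$ (it is essentially the argument of Proposition \ref{pr5.2} with $c$ replaced by $u_0$). The cost is concentrated in your step (c), which is \emph{not} merely ``symmetric to (a)'': in (a) both quantities are values of the same function $h_{x_0,u_0}$, so the action integrals telescope directly, whereas in (c) you are comparing two different implicit action functions $h_{x_0,u_0}$ and $h_{x_0',u_0}$. After splicing a geodesic onto $[0,\eps]$ with $\eps\sim d(x_0,x_0')$ you must still run a Gronwall comparison along the common part of the curve, and the whole estimate hinges on showing that the difference at the splice time, $h_{x_0',u_0}(\gamma(\eps),\eps)-h_{x_0,u_0}(\gamma(\eps),\eps)$, is $O(d(x_0,x_0'))$; that requires a quantitative version of the limit $h\to u_0$ as $s\to 0^+$ along curves of bounded speed (extractable from the proof of Lemma \ref{lemA}, not just from the qualitative statement in \cite{WWY}). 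You correctly flag this as the main obstacle, but it is the substantive part of the proof rather than bookkeeping; with it supplied, your argument closes and gives an alternative, flow-free proof of the proposition.
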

This proposition is an immediate consequence of Lemma \ref{lem3.1} and Lemma \ref{lem3.2} below.

\begin{lemma}\label{lem3.1}
	Given any $(x_0,u_0)\in M\times [a,b]$, there exists a constant $\kappa:=\kappa_{a,b,\delta,T}$ such that the function $(x,t)\mapsto h_{x_0,u_0}(x,t)$ is $\kappa$-Lipschitz continuous on $M\times [\delta,T]$.
\end{lemma}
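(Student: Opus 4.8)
The plan is to prove the two one‑variable Lipschitz estimates — first in $x$, then in $t$ — using the a priori compactness estimate (Lemma \ref{lem2.1}) to reduce everything to bounds on a fixed compact set, and then combining them into the claimed joint Lipschitz bound on $M\times[\delta,T]$.

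First I would establish Lipschitz continuity in $x$. Fix $(x_0,u_0)$ and $t\in[\delta,T]$, and take $x,y\in M$. Let $\gamma:[0,t]\to M$ be a minimizer of $h_{x_0,u_0}(y,t)$, so that $(\gamma(s),u(s),p(s))$ stays in the compact set $\mathcal K=\mathcal K_{a,b,\delta,T}$ from Lemma \ref{lem2.1}; in particular $\|\dot\gamma(s)\|$ and the relevant values of $L$ are bounded by a constant depending only on $a,b,\delta,T$, and $h_{x_0,u_0}$ takes values in a bounded interval on $\Omega_{a,b,\delta,T}$. The idea is to reparametrize: build a competitor curve $\tilde\gamma:[0,t]\to M$ which follows $\gamma$ on $[0,t-\sigma]$ (suitably rescaled in time) and then runs along a minimizing geodesic from $y$ to $x$ on a short final interval of length $\sigma\asymp d(x,y)$, travelling at bounded speed. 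Plugging $\tilde\gamma$ into the variational characterization (\ref{iaf}) and using the Markov property (Proposition \ref{Markov}) to control the action increment, together with the Lipschitz‑in‑$u$ hypothesis (H3)/(L3) and a Gronwall argument to handle the implicit dependence of $L$ on $h$ along the curve, yields $h_{x_0,u_0}(x,t)\le h_{x_0,u_0}(y,t)+C\,d(x,y)$ for a constant $C=C_{a,b,\delta,T}$. Swapping the roles of $x$ and $y$ gives the two‑sided bound.

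Next I would establish Lipschitz continuity in $t$. Fix $(x_0,u_0)$ and $x\in M$, and take $\delta\le t_1<t_2\le T$. For the upper bound on $h_{x_0,u_0}(x,t_2)-h_{x_0,u_0}(x,t_1)$, use the Markov property to write $h_{x_0,u_0}(x,t_2)\le h_{y,\,h_{x_0,u_0}(y,t_1)}(x,t_2-t_1)$ with $y=x$, and estimate the short‑time action $h_{x,v}(x,t_2-t_1)-v$ over a curve that stays near $x$ (e.g. constant), which is $O(t_2-t_1)$ by the boundedness of $L$ on $\mathcal K$; combining with the already‑proven Lipschitz‑in‑$x$ bound and the monotonicity/Lipschitz dependence on the $u$‑argument controls the error. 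For the lower bound, let $\gamma:[0,t_2]\to M$ be a minimizer of $h_{x_0,u_0}(x,t_2)$; by Lemma \ref{lem2.1} its speed is bounded, so $d(\gamma(t_1),x)\le C(t_2-t_1)$, and $h_{x_0,u_0}(x,t_2)\ge h_{x_0,u_0}(\gamma(t_1),t_1)+\int_{t_1}^{t_2}L(\dots)ds \ge h_{x_0,u_0}(\gamma(t_1),t_1)-C(t_2-t_1)$, after which the Lipschitz‑in‑$x$ estimate converts $h_{x_0,u_0}(\gamma(t_1),t_1)$ into $h_{x_0,u_0}(x,t_1)$ up to $C\,d(\gamma(t_1),x)\le C(t_2-t_1)$. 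Again a Gronwall step absorbs the implicit $h$‑dependence inside $L$.

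The main obstacle — and the place where one has to be careful — is the implicit dependence of the Lagrangian on the action function itself inside the integral $\int_0^t L(\gamma,h_{x_0,u_0}(\gamma(\tau),\tau),\dot\gamma)\,d\tau$: when comparing the action of a true minimizer against that of a competitor, the integrand is evaluated at different $u$‑values along the two curves, so one cannot directly compare integrals term by term. The resolution is the same device already used in the proofs of Propositions \ref{Mono II} and \ref{pr3.2}: set $F(\tau)$ equal to the difference of the two action‑function values along the reference curve, use (L3) to get an integral inequality $F(\tau)\le \text{(boundary error)} + \lambda\int F$, and close it with Gronwall's inequality; the ``boundary error'' is precisely the $O(d(x,y))$ or $O(t_2-t_1)$ term coming from the geometric modification of the curve. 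Everything else is bookkeeping with constants that depend only on $a,b,\delta,T$ through $\mathcal K$.
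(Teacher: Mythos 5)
Your proposal is correct and follows essentially the same route as the paper: prove the spatial Lipschitz estimate by comparing a minimizer against a competitor that appends a short geodesic of length $\asymp d(x,y)$, prove the temporal estimate by restricting the minimizer of $h_{x_0,u_0}(x,t_2)$ to $[0,t_1]$ and converting $h_{x_0,u_0}(\gamma(t_1),t_1)$ to $h_{x_0,u_0}(x,t_1)$ via the spatial estimate, with all constants controlled by the a priori compactness lemma. The only real difference is that the paper truncates the minimizer rather than reparametrizing it, so the variational inequality (with $h$ evaluated along the test curve and bounded by Lemma \ref{lemA}) applies directly and no Gronwall step is needed; your Gronwall device is a harmless extra safeguard made necessary only by the time-rescaling in your competitor.
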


\begin{proof}
	Let $u(\cdot,\cdot)=h_{x_0,u_0}(\cdot,\cdot)$ on $M\times[\delta,T]$. We need to show (i) for any given $t\in[\delta,T]$, $u(\cdot,t)$ is Lipschitz on $M$ with a Lipschitz constant depending only on $a$, $b$, $\delta$ and $T$. (ii) for any given $x\in M$, $u(x,\cdot)$ is Lipschitz on $[\delta,T]$ with a Lipschitz constant depending only on $a$, $b$, $\delta$ and $T$.

	(i) Fix $t\in(\frac{\delta}{4},2T)$, we first show  that for any $x'\in M$, there is a neighborhood $U_{x'}$ of $x'$ and a constant $K_1>0$ depending only on $a$, $b$, $\delta$ and $T$, such that
	\[
	|u(x,t)-u(y,t)|\leq K_1d(x,y),\quad \forall x,\ y\in U_{x'}.
	\]
	Let $\tau=t-\frac{\delta}{4}$, $U_{x'}=B(x',\frac{\tau}{2})$ and $\Delta t=d(x,y)$. Then $d(x,y)\leq \tau$
	and $\Delta t\leq t-\frac{\delta}{4}$. Let $\gamma:[0,t]\to M$ be a minimizer of $u(x,t)$.
\[
	u(y,t)-u(x,t)=\Big(u(y,t)-u(\gamma(t-\Delta t),t-\Delta t)\Big)+\Big(u(\gamma(t-\Delta t),t-\Delta t)-u(x,t)\Big)=:A+B.
	\]

	Next we estimate $A$ and $B$ respectively. For $A$, let $\alpha:[t-\Delta t,t]\to M$ with $\alpha(t-\Delta t)=\gamma(t-\Delta t)$ and $\alpha(t)=y$ be a geodesic with
	\[
	\|\dot{\alpha}\|=\frac{d(\gamma(t-\Delta t),y)}{\Delta t}\leq\frac{d(\gamma(t-\Delta t),x)+d(x,y)}{\Delta t}=\frac{d(\gamma(t-\Delta t),x)}{\Delta t}+1.
	\]
	Note that $d(\gamma(t-\Delta t),x)\leq\int_{t-\Delta t}^t\|\dot{\gamma}\|ds$, which together with Lemma \ref{lem2.1} implies that $d(\gamma(t-\Delta t),x)\leq J_1\Delta t$, where $J_1$ is a constant depending only on $a$, $b$, $\delta$ and $T$. Therefore, $\|\dot{\alpha}\|\leq J_1+1$ for all $s\in[t-\Delta t,t]$. In view of Lemma \ref{lemA} in Appendix, we have $|u(\alpha(s),s)|\leq J_2$ for all $s\in[t-\Delta t,t]$, where $J_2$ is a constant depending only on $a$, $b$, $\delta$ and $T$. Hence,
	\begin{align}\label{3-1}
	A=u(y,t)-u(\gamma(t-\Delta t),t-\Delta t)\leq \int_{t-\Delta t}^t L(\alpha(s),u(\alpha(s),s),\dot{\alpha}(s))ds\leq J_3d(x,y),
	\end{align}
	for some constant $J_3$ depending only on $a$, $b$, $\delta$ and $T$.

	For $B$, we have
	\begin{align*}
	u(\gamma(t-\Delta t),t-\Delta t)-u(x,t)&=-\int_{t-\Delta t}^t L(\gamma(s),u(\gamma(s),s),\dot{\gamma}(s))ds\\
&\leq \int_{t-\Delta t}^t |L(\gamma(s),u(\gamma(s),s),\dot{\gamma}(s))|ds.
	\end{align*}
	By Lemma \ref{lem2.1}, we have
	\begin{align}\label{3-2}
	B\leq J_4d(x,y)
	\end{align}
	for some constant $J_4$ depending only on $a$, $b$, $\delta$ and $T$.

	Combining (\ref{3-1}) and (\ref{3-2}), we have
	\[
	u(y,t)-u(x,t)\leq (J_3+J_4)d(x,y).
	\]
	By exchanging the roles of $x$ and $y$, we get
	\[
	|u(y,t)-u(x,t)|\leq K_1d(x,y),\quad \forall (x,y)\in U_{x'},
	\]
	where constant $K_1$ depends only on $a$, $b$, $\delta$ and $T$. This means that $u(\cdot,t)$ is locally Lipschitz continuous on $M$ for any given $t\in(\frac{\delta}{4},2T)$. Since $M$ is compact and the existence of geodesic between arbitrary $x$ and $y$, we conclude that for any given $t\in(\frac{\delta}{4},2T)$, $u(\cdot,t)$ is Lipschitz continuous on $M$ with a Lipschitz constant depending  on $a$, $b$, $\delta$ and $T$ only.

	(ii) Fix $x\in M$, for any $t'\in [\delta,T]$, we show that there is a neighborhood $V_{t'}$ of $t'$ and a constant $K_2>0$ depending only on $a$, $b$, $\delta$ and $T$, such that
	\[
	|u(x,t)-u(x,s)|\leq K_2|t-s|,\quad \forall t,\ s\in V_{t'}.
	\]
	Let $\xi=t'-\frac{\delta}{2}$ and $V_{t'}=[t'-\xi,t'+\xi]$. Then $\frac{\delta}{2}\leq \xi\leq T$ and
	$V_{t'}=[\frac{\delta}{2},2t'-\frac{\delta}{2}]\subset[\frac{\delta}{2},2T]$. Now we estimate $u(x,t)-u(x,s)$, $t$, $s\in V_{t'}$.

	If $t>s$, let $\gamma:[0,t]\to M$ be a minimizer of $u(x,t)$. Then
	\[
	u(x,t)=u(\gamma(s),s)+\int_s^tL(\gamma(\tau),u(\gamma(\tau),\tau),\dot{\gamma}(\tau))d\tau.
	\]
	By Lemma \ref{lem2.1} we have $|L(\gamma(\tau),u(\gamma(\tau),\tau),\dot{\gamma}(\tau))|\leq J_5$
	for some constant $J_5>0$ depending only on $a$, $b$, $\delta$ and $T$. Thus, we get
	\begin{align*}
	u(x,t)-u(x,s)&=u(\gamma(s),s)-u(x,s)+\int_s^tL(\gamma(\tau),u(\gamma(\tau),\tau),\dot{\gamma}(\tau))d\tau\\
	&\leq u(\gamma(s),s)-u(x,s)+J_5(t-s).
	\end{align*}
	From (i) and Lemma \ref{lem2.1}, we have
	\[
	|u(\gamma(s),s)-u(x,s)|\leq K_1d(\gamma(s),x)\leq K_1\int_s^t\|\dot{\gamma}\|d\tau\leq J_6(t-s)
	\]
	for some constant $J_6>0$ depending only on $a$, $b$, $\delta$ and $T$. Therefore, we have
	\[
	u(x,t)-u(x,s)\leq K_2(t-s)
	\]
	for some constant $K_2>0$ depending only on $a$, $b$, $\delta$ and $T$.

	If $t<s$, we can obtain
	\[
	u(x,t)-u(x,s)\leq K_2(s-t)
	\]
	in a similar manner.

	Therefore, we have
	\[
	|u(x,t)-u(x,s)|\leq K_2|t-s|.
	\]
	This implies that $u(x,\cdot)$ is locally Lipschitz continuous on $[\delta,T]$ for any given $x\in M$ with
	a Lipschitz constant depending only on $a$, $b$, $\delta$ and $T$. From the compactness of $[\delta,T]$, we conclude that $u(x,\cdot)$ is Lipschitz continuous on $[\delta,T]$ for any given $x\in M$ with
	a Lipschitz constant depending only on $a$, $b$, $\delta$ and $T$.
	
\end{proof}

\begin{lemma}\label{lem3.2}
	Given any $(x,t)\in M\times [\delta,T]$, there exists a constant $\iota:=\iota_{a,b,\delta,T}$ such that the function $(x_0,u_0)\mapsto h_{x_0,u_0}(x,t)$ is $\iota$-Lipschitz continuous on $M\times[a,b]$.
\end{lemma}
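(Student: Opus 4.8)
The plan is to establish Lipschitz continuity of $(x_0,u_0)\mapsto h_{x_0,u_0}(x,t)$ on $M\times[a,b]$ for a \emph{fixed} target $(x,t)$, by treating the $u_0$-dependence and the $x_0$-dependence separately and then combining. For the $u_0$-dependence I would argue as follows: fix $x_0\in M$ and compare $h_{x_0,u_1}(x,t)$ with $h_{x_0,u_2}(x,t)$ for $u_1<u_2$ in $[a,b]$. Let $\gamma$ be a minimizer of $h_{x_0,u_2}(x,t)$ and set $F(s)=h_{x_0,u_2}(\gamma(s),s)-h_{x_0,u_1}(\gamma(s),s)$. Exactly as in the proofs of Proposition \ref{Mono II} and Proposition \ref{pr3.2}, using that $h_{x_0,u_2}$ is exactly realized along its own minimizer while $h_{x_0,u_1}$ only satisfies the subsolution-type inequality along that same curve, and invoking the Lipschitz condition (H3)/(L3) on $L$ in the $u$-variable with constant $\lambda$, one gets a Gronwall-type inequality of the form $F(s)\le F(0)+\lambda\int_0^s F(\tau)\,d\tau$ with $F(0)=u_2-u_1$, hence $F(s)\le (u_2-u_1)e^{\lambda t}\le (u_2-u_1)e^{\lambda T}$. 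Combined with Monotonicity Property I (Proposition \ref{Mono I}), which gives $F(s)\ge 0$, this yields
\[
0\le h_{x_0,u_2}(x,t)-h_{x_0,u_1}(x,t)\le e^{\lambda T}(u_2-u_1),
\]
so $u_0\mapsto h_{x_0,u_0}(x,t)$ is $e^{\lambda T}$-Lipschitz, uniformly in $x_0$, $x$, and $t\in[\delta,T]$.

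For the $x_0$-dependence, fix $u_0\in[a,b]$ and $(x,t)\in M\times[\delta,T]$, and take $x_0,x_0'\in M$ with $d(x_0,x_0')$ small. The idea is symmetric to part (i) of Lemma \ref{lem3.1}: let $\gamma:[0,t]\to M$ be a minimizer of $h_{x_0,u_0}(x,t)$, and build a competitor curve $\tilde\gamma$ for $h_{x_0',u_0}(x,t)$ by following a short geodesic from $x_0'$ to $\gamma(\Delta t)$ on $[0,\Delta t]$ (where $\Delta t=d(x_0,x_0')$) and then following $\gamma$ on $[\Delta t,t]$. Using the a priori compactness estimate (Lemma \ref{lem2.1}) to control $\|\dot\gamma\|$ — hence $d(\gamma(\Delta t),x_0)\le J_1\Delta t$ and $\|\dot{\tilde\gamma}\|$ bounded on $[0,\Delta t]$ — together with the a priori bound on $|h|$ along such curves (Lemma \ref{lemA} in the Appendix), the contribution of the geodesic segment to the action is $O(\Delta t)$, and the difference on the shared segment is handled by the Markov property (Proposition \ref{Markov}) plus the just-established $u_0$-Lipschitz bound to absorb the discrepancy in the "running" $u$-values $h_{x_0',u_0}(\cdot,\cdot)$ versus $h_{x_0,u_0}(\cdot,\cdot)$. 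This gives $h_{x_0',u_0}(x,t)-h_{x_0,u_0}(x,t)\le C\,d(x_0,x_0')$ locally, and swapping the roles of $x_0,x_0'$ and using compactness of $M$ promotes it to a global Lipschitz bound with constant depending only on $a,b,\delta,T$.

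Finally, combining the two estimates via the triangle inequality,
\[
|h_{x_0',u_0'}(x,t)-h_{x_0,u_0}(x,t)|\le |h_{x_0',u_0'}(x,t)-h_{x_0',u_0}(x,t)|+|h_{x_0',u_0}(x,t)-h_{x_0,u_0}(x,t)|\le e^{\lambda T}|u_0'-u_0|+C\,d(x_0,x_0'),
\]
which gives the desired $\iota_{a,b,\delta,T}$-Lipschitz continuity on $M\times[a,b]$ (the constant being, say, $\max\{e^{\lambda T},C\}$ up to the comparability of the product metric).

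I expect the main obstacle to be the $x_0$-dependence in part (ii): unlike in Lemma \ref{lem3.1}(i), here the perturbation is at the \emph{initial} endpoint, where $h_{x_0,u_0}$ degenerates (it only satisfies $\lim_{s\to0^+}h_{x_0,u_0}(\gamma(s),s)=u_0$ rather than being Lipschitz up to $s=0$), so one must be careful that the competitor curve $\tilde\gamma$ genuinely satisfies the initial condition $\lim_{s\to0^+}h_{x_0',u_0}(\tilde\gamma(s),s)=u_0$ and that the action contribution over $[0,\Delta t]$ is controlled despite the possible large magnitude of $\|\dot{\tilde\gamma}\|\sim d(\gamma(\Delta t),x_0')/\Delta t$; this is exactly where Lemma \ref{lem2.1} (bounding $\|\dot\gamma\|$ so that $d(\gamma(\Delta t),x_0)=O(\Delta t)$) and Lemma \ref{lemA} (bounding $|h|$ along the competitor) are essential, and also where the interplay with the Markov property and the $u_0$-Lipschitz bound from part (i) must be orchestrated correctly.
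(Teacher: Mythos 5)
Your route is genuinely different from the paper's. The paper proves this lemma at the level of the characteristics: it takes a minimizer of $h_{x_1,u_1}(x,t)$ with initial momentum $p_1$, flows the perturbed initial datum $(x_2,u_2,p_1)$ under equations (\ref{che}), invokes smooth dependence on initial conditions (inside the compact set furnished by Lemma \ref{lem2.1}) to get $d(x_1(t),x_2(t))+|u_1(t)-u_2(t)|\leq C\big(d(x_1,x_2)+|u_1-u_2|\big)$, and then converts this into an estimate on $h$ via the Minimizing Property (Proposition \ref{pr3.2}, which gives $h_{x_2,u_2}(x_2(t),t)\leq u_2(t)$) together with Lemma \ref{lem3.1} to move the endpoint from $x_2(t)$ back to $x$. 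Your argument is purely variational (competitor curves, Gronwall, Markov) and avoids the ODE machinery and the $C^3$ regularity it exploits; the trade-off is that you must fight the implicit dependence of $L$ on $h$ itself at every step, which the paper's use of Proposition \ref{pr3.2} sidesteps entirely.

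Two points need attention. First, in the $u_0$-step you run Gronwall along the minimizer of $h_{x_0,u_2}$; that gives the inequality in the wrong direction. Along the minimizer of $h_{x_0,u_2}(x,t)$ you have equality for $h_{x_0,u_2}$ and only ``$\leq$'' for $h_{x_0,u_1}$, so $F(s')\geq F(s)-\lambda\int_s^{s'}F(\tau)\,d\tau$, i.e.\ a lower bound $F(t)\geq e^{-\lambda t}(u_2-u_1)$, not the upper bound you claim. You must integrate along the minimizer of $h_{x_0,u_1}$ (equality for the smaller function, sub-inequality for the larger), which together with $F\geq 0$ from Proposition \ref{Mono I} gives $F(s')\leq F(s)+\lambda\int_s^{s'}F(\tau)\,d\tau$ and hence $F(t)\leq e^{\lambda T}(u_2-u_1)$. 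Second, the $x_0$-step is only a sketch at the crucial point; the clean way to assemble your ingredients is: (a) use the calibration of $\gamma$ on $[0,\Delta t]$ plus Lemma \ref{lem2.1} to get $|h_{x_0,u_0}(\gamma(\Delta t),\Delta t)-u_0|\leq C\Delta t$, and the geodesic competitor from $x_0'$ to $\gamma(\Delta t)$ plus an a priori bound on $|h_{x_0',u_0}|$ along it (note that Lemma \ref{lemA} is stated only for $t\geq\delta$, so at these small times you need its proof, not its statement) to get $|h_{x_0',u_0}(\gamma(\Delta t),\Delta t)-u_0|\leq C\Delta t$; then (b) apply the Markov property at time $\Delta t$ through the point $\gamma(\Delta t)$ --- with equality for $h_{x_0,u_0}$ since $\gamma$ is its minimizer and ``$\leq$'' for $h_{x_0',u_0}$ --- and the already-proved $u_0$-Lipschitz bound to conclude $h_{x_0',u_0}(x,t)-h_{x_0,u_0}(x,t)\leq e^{\lambda T}\,\big|h_{x_0',u_0}(\gamma(\Delta t),\Delta t)-h_{x_0,u_0}(\gamma(\Delta t),\Delta t)\big|\leq C'\,d(x_0,x_0')$. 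A direct Gronwall comparison along the shared segment without the Markov reduction runs into the unknown sign of the difference, so step (b) is what makes your plan close. With these repairs the argument is correct and self-contained.
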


\begin{proof}
	By Lemma \ref{lem2.1}, there exists a compact set $\mathcal{K}:= \mathcal{K}_{a,b,\delta,T}\subset T^*M\times\mathbf{R}$ such that for any $(x_0,u_0,x,t)\in\Omega_{a,b,\delta,T}$ and any minimizer $\gamma(s)$ of $h_{x_0,u_0}(x,t)$, we have
	\[
	(x(s),u(s),p(s))\subset\mathcal{K}, \quad \forall s\in[0,t],
	\]
	where $x(s)=\gamma(s)$, $u(s)=h_{x_0,u_0}(x(s),s)$ and $p(s)=\frac{\partial L}{\partial \dot{x}}(x(s),u(s),\dot{x}(s))$. Let $V$ be a neighborhood of $\mathcal{K}$.

	Given any $(x,t)\in M\times [\delta,T]$, a point $(x_0,u_0,p_0)$ is called a minimizing point of $(x,t)$ if
	a solution $(x(s),u(s),p(s))$ of equations (\ref{che}) with $x(0)=x_0$, $u(0)=u_0$, $x(t)=x$ and $\frac{\partial L}{\partial\dot{x}}(x_0,u_0,\dot{x}(0))=p_0$ exists on $[0,t]$ and $x(s)$ is a minimizer of $h_{x_0,u_0}(x,t)$. Proposition \ref{VP} guarantees the existence of minimizing points of $(x,t)$.
	Let $\mathcal{G}$ denote the set of minimizing points of $(x,t)$. It is clear that $\mathcal{G}$ is a compact subset of $\mathcal{K}$. From the theory of ordinary differential equations, there is a constant $\Delta>0$ such that for each $(x_0,u_0,p_0)\in \mathcal{G}$, if $d((x,u,p),(x_0,u_0,p_0))<\Delta$, then the solution $(x(s),u(s),p(s))$ of equations (\ref{che}) with $(x,u,p)$ as initial value condition exists on $[0,t]$ and
	$(x(s),u(s),p(s))\subset V$.

	Given any $(x_0,u_0)\in M\times[a,b]$, let $U=\{(x,u)\in M\times[a,b]\ |\ d\big((x,u),(x_0,u_0)\big)<\frac{\Delta}{2}\}$.
	For each $(x_1,u_1)$, $(x_2,u_2)\in U$, it suffices to show that
	\begin{align}\label{3-3}
	|h_{x_1,u_1}(x,t)-h_{x_2,u_2}(x,t)|\leq\iota(d(x_1,x_2)+|u_1-u_2|)
	\end{align}
	for some constant $\iota>0$ which depends only on $a$, $b$, $\delta$ and $T$.

	By Proposition \ref{VP}, there is a minimizer $x_1(s)$ of $h_{x_1,u_1}(x,t)$. Let $p_1=\frac{\partial L}{\partial \dot{x}}(x_1,u_1,\dot{x}_1(0))$ and $(x_1(s),u_1(s),p_1(s))$ denote the solution of equations (\ref{che}) with $(x_1,u_1,p_1)$ as initial conditions. Since $(x_1,u_1,p_1)\in\mathcal{G}$ and $d\big((x_1,u_1,p_1),(x_2,u_2,p_1)\big)<\Delta$, then the solution $(x_2(s),u_2(s),p_2(s))$ of equations (\ref{che}) with $(x_2,u_2,p_1)$ as initial conditions exists on $[0,t]$.

	By the differentiability of the solutions of equations (\ref{che}) with respect to initial values, there is a constant $C>0$ depending only on $V$ such that
	\begin{align}\label{3-4}
	\begin{split}
	d\big(x_1(t),x_2(t)\big)&\leq C\big(d(x_1,x_2)+|u_1-u_2|\big),\\
	|u_1(t)-u_2(t)|&\leq C\big(d(x_1,x_2)+|u_1-u_2|\big).
	\end{split}
	\end{align}
	From Lemma \ref{lem3.1} and (\ref{3-4}), we have
	\begin{align}\label{3-5}
	|h_{x_2,u_2}(x,t)-h_{x_2,u_2}(x_2(t),t)|\leq \kappa d\big(x,x_2(t)\big)\leq\kappa C \big(d(x_1,x_2)+|u_1-u_2|\big).
	\end{align}
	In view of Proposition \ref{pr3.2} and (\ref{3-4}), we get
	\begin{align}\label{3-6}
	h_{x_2,u_2}(x_2(t),t)\leq u_2(t)\leq u_1(t)+C\big(d(x_1,x_2)+|u_1-u_2|\big).
	\end{align}
	Note that $u_1(t)=h_{x_1,u_1}(x,t)$. Thus, combining (\ref{3-5}) and (\ref{3-6}), we have
	\[
	h_{x_2,u_2}(x,t)\leq h_{x_1,u_1}(x,t)+C(\kappa+1)\big(d(x_1,x_2)+|u_1-u_2|\big).
	\]

	By exchanging the roles of $(x_1,u_1)$ and $(x_2,u_2)$, one can show (\ref{3-3}) which completes the proof.
\end{proof}

\noindent\emph{Proof of Proposition \ref{pr3.3}}.
For each $(x_1,u_1,y_1,t_1)$, $(x_2,u_2,y_2,t_2)\in \Omega_{a,b,\delta,T}$. It follows from Lemma \ref{lem3.1} and Lemma \ref{lem3.2} that
\begin{align*}
|h_{x_1,u_1}(y_1,t_1)-h_{x_2,u_2}(y_2,t_2)|&\leq|h_{x_1,u_1}(y_1,t_1)-h_{x_2,u_2}(y_1,t_1)|+|h_{x_2,u_2}(y_1,t_1)-h_{x_2,u_2}(y_2,t_2)|\\
&\leq \iota d\big((x_1,x_2)+|u_1-u_2|\big)+\kappa \big(d(y_1,y_2)+|t_1-t_2|\big)\\
&\leq l\Big(d(x_1,x_2)+|u_1-u_2|+d(y_1,y_2)+|t_1-t_2|\Big),
\end{align*}
where $l:=\max\{\iota,\kappa\}$. This completes the proof of Proposition \ref{pr3.3}. \hfill$\Box$

\subsection{Reversibility}
We will introduce another implicit action function based on the following property of $h_{x_0,u_0}(x,t)$.
\begin{proposition}[Reversibility property]\label{pr3.4}
	Given $x_0$, $x\in M$, and $t\in(0,+\infty)$, for each $u\in \mathbf{R}$, there exists a unique $u_0\in \mathbf{R}$ such that
	\[
	h_{x_0,u_0}(x,t)=u.
	\]
\end{proposition}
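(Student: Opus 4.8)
The plan is to fix $x_0,x\in M$ and $t>0$ and study the map $\Phi:\mathbf{R}\to\mathbf{R}$ defined by $\Phi(u_0):=h_{x_0,u_0}(x,t)$. The claim is precisely that $\Phi$ is a bijection of $\mathbf{R}$ onto $\mathbf{R}$. Injectivity is immediate from the strict monotonicity in Proposition \ref{Mono I}: if $u_0<u_0'$ then $\Phi(u_0)=h_{x_0,u_0}(x,t)<h_{x_0,u_0'}(x,t)=\Phi(u_0')$, so $\Phi$ is strictly increasing, hence injective. Continuity of $\Phi$ follows from Proposition \ref{pr3.3} (Lipschitz continuity of $(x_0,u_0,x,t)\mapsto h_{x_0,u_0}(x,t)$ on any $\Omega_{a,b,\delta,T}$): for fixed $x_0,x,t$, restricting $u_0$ to any compact interval $[a,b]$ gives a Lipschitz, in particular continuous, function of $u_0$. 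Since a strictly increasing continuous function from $\mathbf{R}$ to $\mathbf{R}$ is a homeomorphism onto its image, and its image is an open interval, it only remains to prove surjectivity, i.e. that $\lim_{u_0\to+\infty}\Phi(u_0)=+\infty$ and $\lim_{u_0\to-\infty}\Phi(u_0)=-\infty$.

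The main obstacle is therefore establishing these two limits, and the natural tool is a quantitative version of the monotonicity already used. I would derive two-sided bounds of the form
\begin{equation*}
u_0 e^{-\lambda t}+A(x_0,x,t)\ \le\ h_{x_0,u_0}(x,t)\ \le\ u_0 e^{\lambda t}+B(x_0,x,t),
\end{equation*}
(or some variant with the roles of $e^{\pm\lambda t}$ swapped according to the sign of $u_0$) where $\lambda$ is the Lipschitz constant from (H3)/(L3) and $A,B$ do not depend on $u_0$. Such estimates come from the defining relation (\ref{iaf}): writing $v(s)=h_{x_0,u_0}(\gamma(s),s)$ along a minimizer $\gamma$, one has $\dot v(s)=L(\gamma(s),v(s),\dot\gamma(s))$, and since $|\partial L/\partial u|\le\lambda$ one gets $L(\gamma(s),v(s),\dot\gamma(s))\ge L(\gamma(s),0,\dot\gamma(s))-\lambda|v(s)|$ and similarly from above; a Gronwall-type comparison then yields the exponential control of $v(t)=h_{x_0,u_0}(x,t)$ in terms of $v(0)=u_0$. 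Alternatively, and perhaps more cleanly, one compares $h_{x_0,u_0}$ with $h_{x_0,0}$ directly: arguing as in the proof of Proposition \ref{Mono II}, the difference $F(s)=h_{x_0,u_0}(\gamma(s),s)-h_{x_0,0}(\gamma(s),s)$ along an appropriate minimizer satisfies a differential inequality $|\dot F|\le\lambda|F|$ with $F(0)=u_0$, whence $|u_0|e^{-\lambda t}\le |h_{x_0,u_0}(x,t)-h_{x_0,0}(x,t)|\le |u_0|e^{\lambda t}$ together with the sign information from monotonicity. Either way, since $h_{x_0,0}(x,t)$ is a fixed finite number, letting $u_0\to\pm\infty$ forces $h_{x_0,u_0}(x,t)\to\pm\infty$.

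Combining the three ingredients — injectivity from strict monotonicity, continuity from local Lipschitz continuity, and the two limits from the quantitative estimate — the intermediate value theorem gives that for every $u\in\mathbf{R}$ there is $u_0\in\mathbf{R}$ with $h_{x_0,u_0}(x,t)=u$, and strict monotonicity makes it unique. This proves the proposition. I expect the only nontrivial point to be the Gronwall/comparison argument establishing the exponential bounds; everything else is a direct invocation of results already proved in Sections 2 and 3.
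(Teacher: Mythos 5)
Your proposal is correct and follows essentially the same route as the paper: uniqueness from Proposition \ref{Mono I}, continuity in $u_0$ from Proposition \ref{pr3.3}, and surjectivity via an ODE comparison/Gronwall argument showing $h_{x_0,u_0}(x,t)\to\pm\infty$ as $u_0\to\pm\infty$. One detail: the ``similarly from above'' step along a minimizer does not literally work, since superlinearity forces $\sup_{(x,\dot x)}L(x,0,\dot x)=+\infty$ and the minimizer's speed is not controlled uniformly as $u_0\to-\infty$; the paper instead runs the upper-bound comparison along a fixed constant-speed geodesic from $x_0$ to $x$ (so $L(\cdot,0,\dot\alpha)$ is bounded by $d_2=\max_{\|\dot x\|\leq \mathrm{diam}(M)/t}L(x,0,\dot x)$), using that $h$ satisfies the sub-inequality along any Lipschitz curve. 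Your alternative route --- comparing $h_{x_0,u_0}$ with $h_{x_0,0}$ along the fixed minimizer of $h_{x_0,0}$ --- also repairs this point and is a clean variant of the same idea.
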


\begin{proof}
	In view of Proposition \ref{Mono I}, we only need to prove the existence of $u_0$.
	By the Lipschitz continuity of $h_{x_0,u_0}(x,t)$ with respect to $u_0$ given by Proposition \ref{pr3.3} , it suffices to show that for each $A>0$ large enough, one can find $u_1$, $u_2\in \mathbf{R}$ such that (i) $h_{x_0,u_1}(x,t)\geq A$ and (ii) $h_{x_0,u_2}(x,t)\leq -A$.

	For (i), let $\gamma:[0,t]\rightarrow M$ be a minimizer of $h_{x_0,u_1}(x,t)$, where $u_1$ is a constant to be determined. By Proposition \ref{VP}, $u(s):=h_{x_0,u_1}(\gamma(s),s)$ is of class $C^1$ for $s\in (0,t]$ and $\lim_{s\rightarrow 0^+}u(s)=u_1$. Let $d_1=\inf_{(x,\dot{x})\in TM} L(x,0,\dot{x})$. From (L2) $d_1$ is well defined. Since $L$ satisfies $(L3)$ and $(\gamma(s),u(s),p(s))$ satisfies equations (\ref{che}), where $p(s)=\frac{\partial L}{\partial \dot{x}}(\gamma(s),u(s),\dot{\gamma}(s))$, then we have
	\[
	\dot{u}(s)=L(\gamma(s),u(s),\dot{\gamma}(s))\geq d_1-\lambda|u(s)|.
	\]
	Consider the Cauchy problem
	\begin{equation*}
	\begin{cases}
	\dot{v}(s)= d_1-\lambda v(s),\quad  s\in(0,t],\\
	v(0)=u_1.
	\end{cases}
	\end{equation*}
	We have
	\[
	v(t)=u_1e^{-\lambda t}+\frac{d_1}{\lambda}(1-e^{-\lambda t}).
	\]
	Requiring $v(t)\geq A$, it yields $u_1\geq Ae^{\lambda t}-\frac{d_1}{\lambda}(e^{\lambda t}-1)$. In order to make $v(s)\geq 0$ for $s\in [0,t]$, we take $u_1\geq \frac{|d_1|}{\lambda}\max\{e^{\lambda t},1\}$. Moreover, let
	\[
	u_1=\max\{Ae^{\lambda t}-\frac{d_1}{\lambda}(e^{\lambda t}-1),\frac{|d_1|}{\lambda}\max\big\{e^{\lambda t},1\}\big\}.
	\]
	The comparison theorem of ordinary differential equations implies $u(t)\geq A$, i.e., $h_{x_0,u_1}(x,t)\geq A$.

	For (ii), let $\bar{\gamma}:[0,t]\rightarrow M$ be a geodesic between $x_0$ and $x$
	with $\|\dot{\bar{\gamma}}\|=\frac{d(x_0,x)}{t}$.
	Let $w(s)=h_{x_0,u_2}(\bar{\gamma}(s),s)$ for $(0,t]$. In particular, $w(t)=h_{x_0,u_2}(x,t)$, where $u_2$ is constant to be determined. Let $d_2=\max_{\|\dot{x}\|\leq \frac{\mathrm{diam}(M)}{t}}L(x,0,\dot{x})$. By the definition of implicit action functions and (L3),
	for each $s_1$, $s_2\in (0,t]$ with $s_1<s_2$, we get
	\[
	w(s_2)\leq w(s_1)+\int_{s_1}^{s_2}(d_2+\lambda|w(s)|)ds.
	\]
	Note that $w(s)$ is Lipschitz continuous, then for almost all $s\in (0,t]$, we have
	\[
	\dot{w}(s)\leq d_2+\lambda |w(s)|.
	\]
	Consider
	\begin{equation*}
	\begin{cases}
	\dot{v}(s)= d_2-\lambda v(s),\quad s\in(0,t],\\
	v(0)=u_2.
	\end{cases}
	\end{equation*}
	We have
	\[
	v(t)=u_2e^{-\lambda t}+\frac{d_2}{\lambda}(1-e^{-\lambda t}).
	\]
	Requiring $v(t)\leq-A$, it yields $u_2\leq -Ae^{\lambda t}-\frac{d_2}{\lambda}(e^{\lambda t}-1)$. In order to make $v(s)\leq 0$ for $s\in [0,t]$, we take $u_2\leq - \frac{|d_2|}{\lambda}\max\{e^{\lambda t},1\}$. Moreover, let
	\[
	u_2=\min\{-Ae^{\lambda t}-\frac{d_2}{\lambda}(e^{\lambda t}-1),-\frac{|d_2|}{\lambda}\max\{e^{\lambda t},1\}\}.
	\]
	Using the comparison theorem of ordinary differential equations again, we have $w(t)\leq -A$. That is $h_{x_0,u_2}(x,t)\leq -A$.
\end{proof}

We can associate to the implicit action function $h_{x_0,u_0}(x,t)$ a new implicit action function $h^{x_0,u_0}(x,t)$ well defined by
\begin{align}\label{3-8}
h^{x_0,u_0}(x,t)=u_0-\inf_{\substack{\gamma(t)=x_0 \\  \gamma(0)=x } }\int_0^tL(\gamma(\tau),h^{x_0,u_0}(\gamma(\tau),t-\tau),\dot{\gamma}(\tau))d\tau,
\end{align}
where the infimum is taken among the Lipschitz continuous curves $\gamma:[0,t]\rightarrow M$. We call $h_{x_0,u_0}(x,t)$ and $h^{x_0,u_0}(x,t)$ the \emph{forward}  and \emph{backward}  implicit action functions respectively. By arguments similar to the ones we made for $h_{x_0,u_0}(x,t)$ in \cite{WWY} and in the present work, we have

\begin{theorem}
	For any given $x_0\in M$ and $u_0\in\mathbf{R}$, there exists a  continuous function $h^{x_0,u_0}(x,t)$ defined on $M\times(0,+\infty)$
	satisfying (\ref{3-8}). Moreover, the infimum in (\ref{3-8}) can be achieved. If  $\gamma$ is a Lipschitz curve achieving the infimum, let $x(s):=\gamma(s)$, $u(s):=h^{x_0,u_0}(x(s),s)$, $p(s):=\frac{\partial L}{\partial \dot{x}}(x(s),u(s),\dot{x}(s))$.
	Then $(x(s),u(s),p(s))$ satisfies equations (\ref{che}) with $x(0)=x$, $x(t)=x_0$ and $\lim_{s\rightarrow t^-}u(s)=u_0$. Furthermore, $h^{x_0,u_0}(x,t)$ has the following properties.
	\begin{itemize}
		\item
		Given $x_0\in M$ and $u_1$, $u_2\in\mathbf{R}$, $L_1$, $L_2$ satisfying (L1)-(L3), we have
		\begin{itemize}
			\item [(i)]
			if $u_1<u_2$, then $h^{x_0,u_1}(x,t)<h^{x_0,u_2}(x,t)$, for all $(x,t)\in M\times (0,+\infty)$;
			\item [(ii)]if $L_1<L_2$, then $h_{L_1}^{x_0,u_0}(x,t)<h_{L_2}^{x_0,u_0}(x,t)$, for all $(x,t)\in M\times  (0,+\infty)$, where $h_{L_i}^{x_0,u_0}(x,t)$ denotes the backward implicit action function associated with $L_i$, $i=1,2$.
		\end{itemize}
		\item
		Given $x_0$, $x\in M$, $u_0\in\mathbb{R}$ and $t>0$, let
$S_{x,t}^{x_0,u_0}$ be the set of the solutions $(x(s),u(s),p(s))$ of  (\ref{che}) on $[0,t]$ with $x(0)=x$, $x(t)=x_0$, $u(t)=u_0$.
Then
\[
h^{x_0,u_0}(x,t)=\sup\{u(0): (x(s),u(s),p(s))\in S_{x,t}^{x_0,u_0}\}, \quad \forall (x,t)\in M\times(0,+\infty).
\]
		\item
		The function $(x_0,u_0,x,t)\mapsto h^{x_0,u_0}(x,t)$ is Lipschitz  continuous on $\Omega_{a,b,\delta,T}$.
		\item
		Given $x_0\in M$, $u_0\in\mathbf{R}$, we have
		\[
		h^{x_0,u_0}(x,t+s)=\sup_{y\in M}h^{y,h^{x_0,u_0}(y,t)}(x,s)
		\]
		for all  $s$, $t>0$ and all $x\in M$. Moreover, the supremum is attained at $y$ if and only if there exists a minimizer $\gamma$ of $h^{x_0,u_0}(x,t+s)$, such that $\gamma(t)=y$.
		\item
		Given $x_0$, $x\in M$, and $t\in(0,+\infty)$, for each $u\in \mathbf{R}$, there exists a unique $u_0\in \mathbf{R}$ such that
		\[
		h^{x_0,u_0}(x,t)=u.
		\]
	\end{itemize}
\end{theorem}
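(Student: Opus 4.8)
The plan is to reduce every assertion to an already-established fact about the \emph{forward} implicit action function by means of a time-reversal change of variables. Introduce the reversed data
\[
\check L(x,u,\dot x):=L(x,-u,-\dot x),\qquad \check H(x,u,p):=H(x,-u,-p),
\]
so that $\check L$ is the contact Lagrangian associated with $\check H$. First I would check that $\check H$ satisfies (H1)--(H3) (equivalently $\check L$ satisfies (L1)--(L3)): positive definiteness and superlinearity in the momentum variable are unaffected by the substitutions $u\mapsto -u$, $p\mapsto -p$, and $|\partial\check H/\partial u(x,u,p)|=|\partial H/\partial u(x,-u,-p)|\le\lambda$. Hence Proposition \ref{VP} and Propositions \ref{Mono I}, \ref{Markov}, \ref{Mono II}, \ref{pr3.2}, \ref{pr3.3}, \ref{pr3.4} all apply with $L$ replaced by $\check L$; write $h^{\check L}_{\cdot,\cdot}(\cdot,\cdot)$ for the forward implicit action function of $\check L$ produced by Proposition \ref{VP}.

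The heart of the matter is the identity
\[
h^{x_0,u_0}(x,t)=-\,h^{\check L}_{x_0,-u_0}(x,t),\qquad (x,t)\in M\times(0,+\infty),
\]
which I would obtain by \emph{defining} $h^{x_0,u_0}$ through the right-hand side and verifying that the defining relation (\ref{3-8}) then holds. Applying the substitution $\tau\mapsto t-\tau$, $\eta(\sigma)=\gamma(t-\sigma)$ to the forward relation (\ref{iaf}) written for $\check L$ with data $(x_0,-u_0)$ turns $\eta(0)=x_0$ into $\gamma(t)=x_0$, sends $\dot\eta(\sigma)$ to $-\dot\gamma(\tau)$, and, since $\check L(y,v,\dot y)=L(y,-v,-\dot y)$, converts $\int_0^t\check L\big(\eta(\sigma),h^{\check L}_{x_0,-u_0}(\eta(\sigma),\sigma),\dot\eta(\sigma)\big)\,d\sigma$ into $\int_0^t L\big(\gamma(\tau),-h^{\check L}_{x_0,-u_0}(\gamma(\tau),t-\tau),\dot\gamma(\tau)\big)\,d\tau$; multiplying by $-1$ reproduces (\ref{3-8}) exactly. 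Continuity of $h^{x_0,u_0}$ on $M\times(0,+\infty)$ is inherited from $h^{\check L}$, and the infimum in (\ref{3-8}) is attained because a curve realizes it iff its time-reversal realizes the infimum defining $h^{\check L}_{x_0,-u_0}$.

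For the statement about the characteristics I would start from a minimizer $\gamma$ of $h^{x_0,u_0}(x,t)$, apply Proposition \ref{VP} to $\check L$ along $\eta(\sigma)=\gamma(t-\sigma)$ to get a solution $(X,U,P)$ of the $\check H$-contact equations with $X(0)=x_0$, $X(t)=x$, $\lim_{s\to 0^+}U(s)=-u_0$, and then form
\[
x(s)=X(t-s),\qquad u(s)=-U(t-s),\qquad p(s)=-P(t-s).
\]
From $\partial\check L/\partial\dot x(y,v,\dot y)=-\partial L/\partial\dot x(y,-v,-\dot y)$ one reads off $p(s)=\partial L/\partial\dot x(x(s),u(s),\dot x(s))$; the identity gives $u(s)=h^{x_0,u_0}(x(s),t-s)$, whence $\lim_{s\to t^-}u(s)=u_0$; and it remains to plug $(x,u,p)=(X,-U,-P)$ with $s\mapsto t-s$ into each of the three equations of (\ref{che}) and simplify using $\check H(x,u,p)=H(x,-u,-p)$, $\partial_x\check H=\partial_xH$, $\partial_u\check H=-\partial_uH$, $\partial_p\check H=-\partial_pH$ (at the reversed point). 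I expect this last verification to be the only genuine computation: each of the three contact ODEs has to reappear after differentiating and carefully tracking the two sign flips $u\mapsto-u$, $p\mapsto-p$ together with the reparametrization, but there is no conceptual obstacle.

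Finally the five listed properties follow from the identity together with the observation that $(x_0,u_0,x,t)\mapsto(x_0,-u_0,x,t)$ is a bi-Lipschitz bijection $\Omega_{a,b,\delta,T}\to\Omega_{-b,-a,\delta,T}$ reversing the order of the $u_0$-variable: (i) comes from Proposition \ref{Mono I} for $\check L$ (the sign flip in $u_0$ and the outer minus sign together preserve the strict inequality); (ii) from Proposition \ref{Mono II} for $\check L$, using $L_1<L_2\Rightarrow\check L_1<\check L_2$ and keeping track of the outer minus sign; the supremum representation from Proposition \ref{pr3.2} for $\check L$ via the time-reversal bijection between $S_{x,t}^{x_0,u_0}$ and the $\check H$-solutions with $X(0)=x_0$, $X(t)=x$, $U(0)=-u_0$, under which $\sup u(0)=-\inf U(t)$; the Lipschitz continuity on $\Omega_{a,b,\delta,T}$ by composing that bijection with Proposition \ref{pr3.3} for $\check L$; the Markov property with a supremum from Proposition \ref{Markov} for $\check L$ after substituting $h^{\check L}_{x_0,-u_0}(y,t)=-h^{x_0,u_0}(y,t)$ inside; and the reversibility assertion from Proposition \ref{pr3.4} for $\check L$ at the value $-u$. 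Once the identity is in hand, none of these requires new analytic input.
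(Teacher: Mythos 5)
Your reduction by the involution $\check L(x,u,\dot x)=L(x,-u,-\dot x)$, $\check H(x,u,p)=H(x,-u,-p)$ is correct, and it is a genuinely different route from the one the paper intends: the paper gives no proof at all, saying only that the theorem follows ``by arguments similar to'' those for the forward function, i.e.\ by redoing the fixed-point construction of \cite{WWY}, the Gronwall comparisons, the a priori compactness and the Lipschitz estimates with all time arrows reversed. Your single identity $h^{x_0,u_0}(x,t)=-h^{\check L}_{x_0,-u_0}(x,t)$ replaces all of that: once one checks (as you do) that $\check H$ still satisfies (H1)--(H3), that $\check L$ is the Legendre transform of $\check H$, and that the substitution $\tau\mapsto t-\tau$, $\eta(\sigma)=\gamma(t-\sigma)$ carries (\ref{iaf}) for $\check L$ into (\ref{3-8}), each listed property becomes a formal consequence of the corresponding forward statement (Propositions \ref{VP}, \ref{Mono I}, \ref{Markov}, \ref{Mono II}, \ref{pr3.2}, \ref{pr3.3}, \ref{pr3.4}). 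The verification that $(X(t-s),-U(t-s),-P(t-s))$ solves (\ref{che}) for $H$ is exactly the sign bookkeeping you describe, and it closes. This buys economy and eliminates a dozen repeated Gronwall arguments; it costs nothing beyond stating the dictionary carefully. Note that your computation yields $u(s)=h^{x_0,u_0}(x(s),t-s)$, which is what makes $\lim_{s\to t^-}u(s)=u_0$ true; the theorem's ``$u(s):=h^{x_0,u_0}(x(s),s)$'' is evidently a misprint that your argument silently corrects.

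One point you must not leave implicit: in item (ii) your reduction does \emph{not} give the printed inequality. From $L_1<L_2$ you get $\check L_1<\check L_2$, hence $h^{\check L_1}_{x_0,-u_0}<h^{\check L_2}_{x_0,-u_0}$ by Proposition \ref{Mono II}, and the outer minus sign then yields $h_{L_1}^{x_0,u_0}>h_{L_2}^{x_0,u_0}$ --- the reverse of what is stated. The reversed inequality is in fact the correct one: by the $\sup$ representation, along a characteristic $\dot u=L$ with the terminal value $u(t)=u_0$ held fixed, a larger Lagrangian forces a smaller initial value $u(0)$. So item (ii) as printed carries a sign error, and ``keeping track of the outer minus sign'' must be done explicitly; as written, your sketch reads as though it establishes the printed ``$<$'', which your own identity contradicts. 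A similar parametrization shift (the reversed curve passes through $y$ at time $s$, not $t$) should be spelled out when you transport the Markov property.
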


Moreover, we obtain the relation between $h_{x_0,u_0}(x,t)$ and $h^{x_0,u_0}(x,t)$ in the following.
\begin{proposition}\label{relation}
	\[h_{x_0,u_0}(x,t)=u\Leftrightarrow h^{x,u}(x_0,t)=u_0.\]
\end{proposition}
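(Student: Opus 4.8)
The plan is to establish the two implications separately using the variational characterizations of the forward and backward implicit action functions together with their reversibility. First, suppose $h_{x_0,u_0}(x,t)=u$. Let $\gamma:[0,t]\to M$ be a minimizer of $h_{x_0,u_0}(x,t)$, so that $\gamma(0)=x_0$, $\gamma(t)=x$, and, setting $w(s)=h_{x_0,u_0}(\gamma(s),s)$ and $p(s)=\frac{\partial L}{\partial\dot x}(\gamma(s),w(s),\dot\gamma(s))$, the triple $(\gamma(s),w(s),p(s))$ solves $(\ref{che})$ with $w(0)=u_0$, $w(t)=u$. Now read this same solution "backward": reparametrize by $\tilde\gamma(s):=\gamma(s)$ viewed as a curve from $x$ to $x_0$. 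I claim this solution witnesses $h^{x,u}(x_0,t)\geq u_0$ via the supremum characterization of $h^{x,u}$ (the second bulleted property of the backward action function), since $(\gamma,w,p)$ lies in $S_{x,t}^{x_0,u}$ — wait, one must be careful with the direction of time in the definition $(\ref{3-8})$; the curve in $S_{x,t}^{x_0,u_0}$ runs from $x$ at time $0$ to $x_0$ at time $t$, so I instead use the time-reversal $\sigma\mapsto(\gamma(t-\sigma),w(t-\sigma),\ldots)$, and the point is that $h^{x,u}(x_0,t)=\sup\{v(0)\}\geq u_0$. For the reverse inequality $h^{x,u}(x_0,t)\leq u_0$, suppose it failed; then there is a solution in $S_{x,t}^{x_0,u}$ whose value at time $0$ exceeds $u_0$, i.e. there is $u_0'>u_0$ with a solution from $(x,u_0')$-type data reaching $(x_0,\cdot)$... here I invoke the forward minimizing property (Proposition \ref{pr3.2}) and Monotonicity I (Proposition \ref{Mono I}) to derive a contradiction with $h_{x_0,u_0}(x,t)=u$: by monotonicity, $h_{x_0,u_0'}(x,t)>h_{x_0,u_0}(x,t)=u$, yet the existence of that solution would force $h_{x_0,u_0'}(x,t)\leq u$, a contradiction. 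Hence $h^{x,u}(x_0,t)=u_0$.

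The converse implication, $h^{x,u}(x_0,t)=u_0\Rightarrow h_{x_0,u_0}(x,t)=u$, follows by the symmetric argument with the roles of the forward and backward action functions interchanged: a maximizer of $h^{x,u}(x_0,t)$ gives a solution of $(\ref{che})$ from $(x_0,u_0)$ to $(x,u)$, which by Proposition \ref{pr3.2} shows $h_{x_0,u_0}(x,t)\leq u$, and the reverse inequality comes from the backward analogue of Monotonicity I together with the sup-characterization of $h^{x,u}$. Alternatively — and this is cleaner — one can avoid redoing the argument: by Proposition \ref{pr3.4} (reversibility of $h$) the map $u_0\mapsto h_{x_0,u_0}(x,t)$ is a strictly increasing bijection of $\mathbf{R}$ onto $\mathbf{R}$, and by the backward reversibility property (last bullet of the Theorem preceding Proposition \ref{relation}) so is $u\mapsto h^{x,u}(x_0,t)$. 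The first implication already exhibits a well-defined map $\Phi:u\mapsto u_0$ satisfying $h_{x_0,u_0}(x,t)=u$ whenever $u_0=\Phi(u)$; since $h_{x_0,\cdot}(x,t)$ is injective, $\Phi$ is precisely the inverse of $u_0\mapsto h_{x_0,u_0}(x,t)$, hence a bijection, and reading the equivalence off its graph gives both directions at once.

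The main obstacle I anticipate is purely bookkeeping about the time-orientation: the forward action $h_{x_0,u_0}$ is defined by curves running $x_0\rightsquigarrow x$ on $[0,t]$ with the boundary data pinned at the initial time, while the backward action $h^{x_0,u_0}$ in $(\ref{3-8})$ is defined by curves $x\rightsquigarrow x_0$ with data pinned at the terminal time $t$ (note the $t-\tau$ inside the integrand). One must check that a single solution curve of the contact system $(\ref{che})$, when its parameter is reversed, still solves $(\ref{che})$ with the appropriately swapped endpoint and $u$-data — this is exactly what is encoded in the constructions around $(\ref{3-8})$ and in the Theorem stated just before Proposition \ref{relation}, so I would cite those rather than re-derive the reversal. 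Once the orientation is handled correctly, the rest is a short chain of inequalities using Propositions \ref{Mono I}, \ref{pr3.2}, \ref{pr3.4} and their backward counterparts, and no delicate estimate is needed.
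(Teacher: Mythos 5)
Your proof is correct, and for the key step it takes a genuinely different route from the paper. The inequality $h^{x,u}(x_0,t)\geq u_0$ is obtained exactly as in the paper, by feeding the minimizer of $h_{x_0,u_0}(x,t)$ into the supremum characterization of $h^{x,u}(x_0,t)$. For the reverse inequality, however, the paper runs a fresh Gronwall comparison: it takes a minimizer $\gamma$ of $h^{x,u}(x_0,t)$, sets $F(s)=h^{x,u}(\gamma(s),t-s)-h_{x_0,u_0}(\gamma(s),s)$, locates the last zero of $F$ before $t$, and derives $F\leq\lambda\int F$ to contradict $F(0)>0$. You instead observe that if $h^{x,u}(x_0,t)>u_0$ then some solution of (\ref{che}) joins $(x_0,u_0')$ to $(x,u)$ with $u_0'>u_0$, so Proposition \ref{pr3.2} forces $h_{x_0,u_0'}(x,t)\leq u$ while Proposition \ref{Mono I} forces $h_{x_0,u_0'}(x,t)>u$ --- a contradiction with no integral inequality needed. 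This is cleaner because it reuses already-established propositions rather than repeating the Gronwall mechanism, at the cost of leaning on the supremum characterization of $h^{x,u}$ (which the paper also invokes, so nothing extra is assumed); note you do not even need the supremum to be attained, only that it strictly exceeds $u_0$. Your bijection argument for the converse implication ($\Phi\circ\Psi=\mathrm{id}$ for bijections forces $\Psi\circ\Phi=\mathrm{id}$, using Proposition \ref{pr3.4} and its backward counterpart) is also a tidy shortcut where the paper merely says ``the converse is similar.'' The only blemish is expository: the time-orientation bookkeeping you flag mid-proof should be cleaned up so that the solution produced by the maximizer of $h^{x,u}(x_0,t)$ is explicitly exhibited as an element of $S_{x_0,u_0'}^{x,t}$ before Proposition \ref{pr3.2} is applied, but this is exactly the identification encoded in the theorem preceding Proposition \ref{relation} and poses no real difficulty.
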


\begin{proof}
	We verify
	\[h_{x_0,u_0}(x,t)=u\Rightarrow h^{x,u}(x_0,t)=u_0.\]
	The converse implication is similar. Let
	\[
	S_{x_0,t}^{x,u}:=\big\{\mathrm{solutions}\ (x(s),u(s),p(s))\ \mathrm{of}\ (\ref{che})\  \mathrm{on}\ [0,t]:  x(0)=x_0,\ x(t)=x,\ u(t)=u\big\}.
	\]Then
	\[
	h^{x,u}(x_0,t)=\sup\{u(0): (x(s),u(s),p(s))\in S_{x_0,t}^{x,u}\}, \quad \forall (x_0,t)\in M\times(0,+\infty),
	\]
	which shows $h^{x,u}(x_0,t)\geq u_0$. Let $h^{x,u}(x_0,t)=u_1>u_0$. We argue by contradiction. Let $\gamma:[0,t]\rightarrow M$ be a minimizer of $h^{x,u}(x_0,t)$ with $\gamma(0)=x_0$ and $\gamma(t)=x$. We denote
	\[F(s)=h^{x,u}(\gamma(s),t-s)-h_{x_0,u_0}(\gamma(s),s).\]
	It follows that $F(0)=u_1-u_0>0$ and $F(t)=u-u=0$. Hence, there exists $s_0\in (0,t]$ such that $F(s_0)=0$ and $F(s)> 0$ for $s\in [0,s_0)$. Based on the minimality of $\gamma$, we have
	\[h^{x,u}(\gamma(s),t-s)=h^{x,u}(\gamma(s_0),t-s_0)-\int_s^{s_0}L(\gamma(\tau),h^{x,u}(\gamma(\tau),t-\tau),\dot{\gamma}(\tau))d\tau,\]
	\[h_{x_0,u_0}(\gamma(s_0),s_0)\leq h_{x_0,u_0}(\gamma(s),s)+\int_s^{s_0}L(\gamma(\tau),h_{x_0,u_0}(\gamma(\tau),\tau),\dot{\gamma}(\tau))d\tau.\]
	It yields
	\[F(s)\leq \lambda\int_s^{s_0}F(\tau)d\tau.\]
	By Gronwall inequality, we have $F(s)\leq 0$ for $s\in [0,s_0]$, which contradicts $F(0)=0$. Then $h^{x,u}(x_0,t)=u_0$.
\end{proof}



\section{Application I: Solution semigroups for $w_t+H(x,w,w_x)=0$}
In this part, we will consider the following Cauchy problem
\begin{equation}\label{Cau}
\begin{cases}
w_t+H(x,w,w_x)=0,\quad (x,t)\in M\times(0,+\infty),\\
w(x,0)=\varphi(x), \quad x\in M.
\end{cases}
\end{equation}
Our goal is to prove Theorem \ref{thehj} which is an immediate consequence of Propositions \ref{pr4.1}, \ref{pr4.3} and \ref{pr4.5}.

\subsection{Solution semigroups}
Given $\varphi\in C(M,\mathbf{R})$ and  $T>0$, we define an operator $A_\varphi:C(M\times[0,T],\mathbf{R})\to C(M\times[0,T],\mathbf{R})$ by
\begin{equation*}
\forall u\in C(M\times[0,T],\mathbf{R}), \quad
A_\varphi[u](x,t) = \inf_\gamma\{\varphi\big(\gamma(0)\big) + \int_0^t
L\big(\gamma(s),u\big(\gamma(s),s\big), \dot\gamma(s)\big) ds\},
\end{equation*}
where the infimum is taken among the Lipschitz continuous curves $\gamma:[0,t]\to M$ with $\gamma(t)=x$.
By Tonelli Theorem (see for instance \cite{BGH}) the above infimum can be achieved.

\begin{lemma}\label{lem4.1}
	For any given $\varphi\in C(M,\mathbf{R})$ and  $T>0$,  $A_\varphi$ admits a unique fixed point.
\end{lemma}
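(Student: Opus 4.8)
The plan is to apply the Banach fixed point theorem to $A_\varphi$ on a suitable complete metric space, so the whole argument reduces to showing that $A_\varphi$ is a contraction with respect to a cleverly weighted sup-norm. First I would fix $T>0$ and work on $X := C(M\times[0,T],\mathbf{R})$, which is complete under $\|\cdot\|_\infty$. The key observation is the structural inequality: given $u,v\in X$, and letting $\gamma$ be a minimizing curve realizing $A_\varphi[u](x,t)$, one can use $\gamma$ as a (not necessarily optimal) competitor for $A_\varphi[v](x,t)$, obtaining
\[
A_\varphi[v](x,t)-A_\varphi[u](x,t)\leq\int_0^t\Big(L\big(\gamma(s),v(\gamma(s),s),\dot\gamma(s)\big)-L\big(\gamma(s),u(\gamma(s),s),\dot\gamma(s)\big)\Big)\,ds.
\]
By the Lipschitz condition (L3), the integrand is bounded by $\lambda\,|v(\gamma(s),s)-u(\gamma(s),s)|\leq\lambda\,\|u-v\|_\infty$ — but a crude bound here only gives $\|A_\varphi[u]-A_\varphi[v]\|_\infty\leq\lambda T\|u-v\|_\infty$, which is a contraction only for small $T$. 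To handle arbitrary $T$, I would instead introduce the weighted norm
\[
\|w\|_\ast:=\sup_{(x,t)\in M\times[0,T]}e^{-2\lambda t}\,|w(x,t)|,
\]
which is equivalent to $\|\cdot\|_\infty$ (hence $(X,\|\cdot\|_\ast)$ is still complete), and redo the estimate keeping track of the exponential factor: the integrand is bounded by $\lambda e^{2\lambda s}\|u-v\|_\ast$, so integrating and multiplying by $e^{-2\lambda t}$ yields $\|A_\varphi[u]-A_\varphi[v]\|_\ast\leq\tfrac12\|u-v\|_\ast$. Thus $A_\varphi$ is a $\tfrac12$-contraction on $(X,\|\cdot\|_\ast)$ and has a unique fixed point.

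Before invoking Banach, I need to verify that $A_\varphi$ actually maps $X$ into itself, i.e., that $A_\varphi[u]$ is continuous on $M\times[0,T]$ for each $u\in X$; this is where the a priori compactness estimate plays a role. For fixed $u$, the minimizing curves realizing $A_\varphi[u](x,t)$ have velocities lying in a fixed compact set (by a Tonelli-type argument, using (L1)--(L3) and the boundedness of $u$ and $\varphi$, analogous to Lemma \ref{lem2.1}), so one gets uniform Lipschitz bounds in both $x$ and $t$ on $A_\varphi[u]$ by the same competitor-curve technique used in the proof of Lemma \ref{lem3.1}: to compare $A_\varphi[u](x,t)$ with $A_\varphi[u](y,t)$ one splices a short geodesic onto a minimizer, and to compare different times one restricts a minimizer. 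Finiteness of $A_\varphi[u]$ is immediate from superlinearity (L2) bounding $L$ below.

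The main obstacle I anticipate is purely bookkeeping rather than conceptual: one must be careful that the constants in the a priori estimate (the compact velocity set, and hence the modulus of continuity of $A_\varphi[u]$) can be chosen uniformly in $u$ ranging over a bounded set of $X$ — which they can, since only $\|u\|_\infty$, $\|\varphi\|_0$, $T$, and the fixed structure of $L$ enter — but this uniformity is what makes the self-map property robust. The contraction estimate itself is the cleanest part once the weighted norm is in place; the only subtlety is ensuring the minimizer used as a competitor in the reversed inequality (exchanging the roles of $u$ and $v$) exists, which is guaranteed by the Tonelli theorem cited in the construction of $A_\varphi$.
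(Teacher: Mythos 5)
Your proposal is correct, and the heart of it — use a Tonelli minimizer for one of the two functions as a competitor for the other, then invoke (L3) to get $|A_\varphi[u](x,t)-A_\varphi[v](x,t)|\leq\lambda t\,\|u-v\|_\infty$ — is exactly the estimate the paper establishes. Where you diverge is in how this estimate is converted into an actual contraction for arbitrary $T$: the paper iterates the operator, obtaining $\|A_\varphi^n[u]-A_\varphi^n[v]\|_\infty\leq\frac{(T\lambda)^n}{n!}\|u-v\|_\infty$, concludes that $A_\varphi^N$ is a contraction for $N$ large, and then needs the extra (easy) observation that the unique fixed point of $A_\varphi^N$ is also fixed by $A_\varphi$; you instead use a Bielecki-type weighted norm $\|w\|_\ast=\sup e^{-2\lambda t}|w(x,t)|$ to make $A_\varphi$ itself a $\tfrac12$-contraction in one step. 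The two devices are interchangeable here and equally rigorous; yours is marginally cleaner in that it avoids the passage through $A_\varphi^N$, while the paper's has the advantage of staying in the original norm. You also spend effort verifying that $A_\varphi$ maps $C(M\times[0,T],\mathbf{R})$ into itself, a point the paper simply takes for granted; your sketch of this (uniform velocity bounds on minimizers from (L1)--(L3) and the boundedness of $u$ and $\varphi$, plus the splicing argument for Lipschitz continuity in $(x,t)$, with a little care as $t\to 0^+$ where one compares against constant curves) is the right way to fill that gap, so your write-up is if anything more complete than the paper's on this score.
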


\begin{proof}
	For any $v\in C(M\times[0,T],\mathbf{R})$ and any $(x,t)\in M\times[0,T]$, by Tonelli Theorem, there exists $\gamma_1:[0,t]\to M$ such that $\gamma_1(t)=x$ and
	\begin{equation*}
	A_\varphi[v](x,t) = \varphi\big(\gamma_1(0)\big) + \int_0^t L\big(\gamma_1(s),v\big(\gamma_1(s),s\big), \dot\gamma_1(s)\big) ds.
	\end{equation*}
	For any $u\in C(M\times[0,T],\mathbf{R})$, from (L3)  we have
	\begin{align*}
	\big(A_\varphi[u] - A_\varphi[v]\big)(x,t)& \leq\int_0^t (L\big(\gamma_1(s),u\big(\gamma_1(s),s\big),\dot\gamma_1(s)\big) - L\big(\gamma_1(s),v\big(\gamma_1(s),s\big), \dot\gamma_1(s)\big) )ds\\
	&\leq \lambda\|u-v\|_{\infty} t.
	\end{align*}

	By exchanging the position of $u$ and $v$, we obtain
	\begin{equation}\label{4-1}
	|\big(A_\varphi[u] - A_\varphi[v]\big)(x,t)| \leq \lambda \ \|u-v\|_{\infty}t.
	\end{equation}
	Let $\gamma_2:[0,t]\rightarrow M$ be the curve such that
	\[
	A_\varphi^2[v](x,t) = \varphi\big(\gamma_2(0)\big) + \int_0^t L\big(\gamma_2(s),A_\varphi [v]\big(\gamma_2(s),s\big), \dot\gamma_2(s)\big) ds.
	\]
	It follows from (\ref{4-1}) that  for $s\in [0,t]$, we have
	\begin{equation*}
	|\big(A_\varphi[u] - A_\varphi[v]\big)(\gamma_2(s),s)| \leq \lambda \ \|u-v\|_{\infty}s.
	\end{equation*}
	Moreover, we have the following estimates
	\begin{align*}
	|\big(A_\varphi^2[u] - A_\varphi^2[v]\big)(x,t)|\leq & \int_0^t \lambda | A_\varphi[u]\big(\gamma_2(s),s\big) -
	A_\varphi[v]\big(\gamma_2(s),s\big)| ds\\
	\leq & \int_0^t  s \lambda^2 \|u-v\|_{\infty} ds \leq \frac{(t\lambda)^2}{2}
	\|u-v\|_{\infty}.
	\end{align*}
	Moreover, continuing the above procedure, we obtain
	\[
	|\big(A_\varphi^n[u] - A_\varphi^n[v]\big)(x,t)| \leq\frac{(t\lambda)^n}{n!} \|u-v\|_{\infty},
	\]
	which implies
	\[
	\|A_\varphi^n[u] - A_\varphi^n[v]\|_{\infty}\leq\frac{(T\lambda)^n}{n!} \|u-v\|_{\infty}.
	\]
	Therefore, there exists  $N\in \mathbf{N}$ large enough such that $A_\varphi^{N}$ is a
	contraction. Thus, there exists a $u(x,t)\in C(M\times [0,T], \mathbf{R})$ such that
	\begin{equation*}
	A_\varphi^N[u]=u.
	\end{equation*}
	Since $A_\varphi[u] = A_\varphi\circ A_\varphi^N[u] = A_\varphi^N\circ A_\varphi[u]$, we have $A_\varphi[u]$ is also a fixed point of $A_\varphi^N$.
	By the uniqueness of fixed point of $A_\varphi^N$, we have
	\[
	A_\varphi[u] = u.
	\]
	This completes the proof of Lemma \ref{lem4.1}.
\end{proof}

We are now in a position to introduce the solution semigroup for (\ref{Cau}).
We define a family of nonlinear operators $\{T^-_t\}_{t\geq 0}$ from $C(M,\mathbf{R})$ to itself as follows.

\begin{definition}[Backward semigroup]
	For each $\varphi\in C(M,\mathbf{R})$, let
	\[
	T^-_t\varphi(x)=u(x,t), \quad\forall (x,t)\in M\times[0,+\infty),
	\]
	where $u(x,t)$ is the unique fixed point obtained in Lemma \ref{lem4.1}.
\end{definition}

By definition, we have
\begin{equation*}\label{fixubac}
T^-_t\varphi(x)=\inf_{\gamma}\{\varphi(\gamma(0))+\int_0^tL(\gamma(\tau),T^-_\tau\varphi(\gamma(\tau)),\dot{\gamma}(\tau))d\tau\},
\end{equation*}
where the infimum is taken among the Lipschitz continuous curves $\gamma:[0,t]\to M$ with $\gamma(t)=x$ and can be achieved. We call the curves achieving the infimum minimizers of $T^-_t\varphi(x)$.
We will show $\{T^-_t\}_{t\geq 0}$ is a semigroup of operators later and call it the \emph{backward solution semigroup} for (\ref{Cau}).

Similarly, we can define another semigroup of operators $\{T^+_t\}_{t\geq 0}$, called the \emph{forward semigroup}, by
\begin{equation*}\label{fixufor}
T^+_t\varphi(x)=\sup_{\gamma}\{\varphi(\gamma(t))-\int_0^tL(\gamma(\tau),T^+_{t-\tau}\varphi(\gamma(\tau)),\dot{\gamma}(\tau))d\tau\},
\end{equation*}
where the infimum is taken among the Lipschitz continuous curves $\gamma:[0,t]\to M$ with $\gamma(0)=x$.

\subsection{Solution semigroups and implicit action functions}
We study the relationship between solution semigroups and implicit action functions here. First, we give a representation formula for the solution semigroup $\{T^-_t\}_{t\geq 0}$ as follows.
\begin{proposition}[Representation formula]\label{pr4.1}
	For each  $\varphi\in C(M,\mathbf{R})$, we have
	\begin{equation*}
	T^{-}_t\varphi(x)=\inf_{y\in M}h_{y,\varphi(y)}(x,t),\quad  \forall (x,t)\in M\times(0,+\infty).
	\end{equation*}
\end{proposition}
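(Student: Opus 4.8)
The strategy is to prove the two inequalities $T^{-}_t\varphi(x)\le\inf_{y\in M}h_{y,\varphi(y)}(x,t)$ and $\inf_{y\in M}h_{y,\varphi(y)}(x,t)\le T^{-}_t\varphi(x)$, in each case by comparing $h_{y,\varphi(y)}(\cdot,\cdot)$ with $T^{-}_\cdot\varphi$ along a well-chosen curve and running a Gronwall argument of the same type as in the proofs of Propositions~\ref{Mono II} and \ref{pr3.2}. Two structural ingredients are needed. The first is the \emph{sub-solution inequality} for the implicit action function: for any $x_0\in M$, $u_0\in\mathbf{R}$ and any Lipschitz $\alpha:[s_1,s_2]\to M$ with $0\le s_1<s_2$,
\[
h_{x_0,u_0}(\alpha(s_2),s_2)\le h_{x_0,u_0}(\alpha(s_1),s_1)+\int_{s_1}^{s_2}L(\alpha(\sigma),h_{x_0,u_0}(\alpha(\sigma),\sigma),\dot\alpha(\sigma))\,d\sigma,
\]
with equality when $\alpha$ is the restriction of a minimizer of $h_{x_0,u_0}$; this is established in \cite{WWY} and is already used in the proofs of Propositions~\ref{pr3.2} and \ref{relation} above (at $s_1=0$ one reads it as the limit of its counterparts on $[\eps,s_2]$, with the convention $h_{x_0,u_0}(x_0,0):=u_0$). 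The second is the analogous pair of facts for $\{T^{-}_t\}$, which I will deduce from the fixed point identity $T^{-}_t\varphi=A_\varphi[T^{-}_\cdot\varphi]$: given a minimizer $\eta$ of $T^{-}_{s_1}\varphi(w)$ and an arbitrary Lipschitz curve $\beta:[s_1,s_2]\to M$ with $\beta(s_1)=w$, the concatenation of $\eta$ and $\beta$ is admissible in the infimum defining $A_\varphi[T^{-}_\cdot\varphi](\beta(s_2),s_2)=T^{-}_{s_2}\varphi(\beta(s_2))$, which yields the sub-additivity estimate
\[
T^{-}_{s_2}\varphi(\beta(s_2))\le T^{-}_{s_1}\varphi(w)+\int_{s_1}^{s_2}L(\beta(\sigma),T^{-}_\sigma\varphi(\beta(\sigma)),\dot\beta(\sigma))\,d\sigma;
\]
a short contradiction argument from this then shows that every restriction of a minimizer $\gamma$ of $T^{-}_t\varphi(x)$ is again a minimizer, whence $T^{-}_{s_2}\varphi(\gamma(s_2))-T^{-}_{s_1}\varphi(\gamma(s_1))=\int_{s_1}^{s_2}L(\gamma(\sigma),T^{-}_\sigma\varphi(\gamma(\sigma)),\dot\gamma(\sigma))\,d\sigma$ for $0\le s_1<s_2\le t$.

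For the first inequality, fix $y\in M$ and let $\gamma_y:[0,t]\to M$ be a minimizer of $h_{y,\varphi(y)}(x,t)$ (Proposition~\ref{VP}). Put $F(s):=T^{-}_s\varphi(\gamma_y(s))-h_{y,\varphi(y)}(\gamma_y(s),s)$ for $s\in(0,t]$ and $F(0):=0$; since $T^{-}_0\varphi=\varphi$ and $T^{-}_\cdot\varphi$ is continuous, and since $\lim_{s\to0^+}h_{y,\varphi(y)}(\gamma_y(s),s)=\varphi(y)$ by Proposition~\ref{VP} (and Lemmas~3.1 and 3.2 in \cite{WWY}), $F$ is continuous on $[0,t]$. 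Combining the sub-additivity of $\{T^{-}_t\}$ along $\gamma_y$ with the minimizer identity for $h_{y,\varphi(y)}$ along $\gamma_y$ and the Lipschitz bound (L3) gives $F(s_2)-F(s_1)\le\lambda\int_{s_1}^{s_2}|F(\sigma)|\,d\sigma$ for $0\le s_1<s_2\le t$. If $F(t)>0$, set $\sigma_0:=\sup\{s\in[0,t]:F(s)\le0\}$; then $F(\sigma_0)=0$ and $F>0$ on $(\sigma_0,t]$, so $F(\tau)\le\lambda\int_{\sigma_0}^\tau F(\sigma)\,d\sigma$ for $\tau\in[\sigma_0,t]$ and Gronwall's inequality forces $F\equiv0$ there, contradicting $F(t)>0$. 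Hence $T^{-}_t\varphi(x)=F(t)+h_{y,\varphi(y)}(x,t)\le h_{y,\varphi(y)}(x,t)$, and taking the infimum over $y$ gives $T^{-}_t\varphi(x)\le\inf_{y\in M}h_{y,\varphi(y)}(x,t)$.

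For the reverse inequality, let $\gamma:[0,t]\to M$ be a minimizer of $T^{-}_t\varphi(x)$ (which exists by Tonelli's theorem) and set $y:=\gamma(0)$. Let $\Phi(s):=h_{y,\varphi(y)}(\gamma(s),s)-T^{-}_s\varphi(\gamma(s))$ for $s\in(0,t]$ and $\Phi(0):=0$; as before $\Phi$ is continuous on $[0,t]$. Applying the sub-solution inequality for $h_{y,\varphi(y)}$ along $\gamma$, the minimizer identity for $\{T^{-}_t\}$ along $\gamma$, and (L3), one gets $\Phi(s_2)-\Phi(s_1)\le\lambda\int_{s_1}^{s_2}|\Phi(\sigma)|\,d\sigma$ for $0\le s_1<s_2\le t$. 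The same contradiction and Gronwall argument yields $\Phi(t)\le0$, i.e.\ $h_{y,\varphi(y)}(x,t)\le T^{-}_t\varphi(x)$, hence $\inf_{y'\in M}h_{y',\varphi(y')}(x,t)\le T^{-}_t\varphi(x)$. Combining the two inequalities proves the representation formula on $M\times(0,+\infty)$; it also shows the infimum is attained, at the initial point of any minimizer of $T^{-}_t\varphi(x)$.

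The routine part is the two-sided Gronwall comparison; the real work lies in the structural preliminaries, above all the sub-solution inequality for $h_{x_0,u_0}$ along arbitrary (not necessarily characteristic) curves and the fact that restrictions of $T^{-}$-minimizers remain minimizers. Some extra care is also needed at the endpoint $s=0$, where $h_{y,\varphi(y)}(\cdot,0)$ and the two integral identities exist only as limits of their values on $[\eps,t]$.
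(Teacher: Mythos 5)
Your proposal is correct and follows essentially the same route as the paper: the paper proves the inequality $T^{-}_t\varphi(x)\geq\inf_{y}h_{y,\varphi(y)}(x,t)$ by comparing $h_{\bar y,\varphi(\bar y)}$ with $T^{-}_\cdot\varphi$ along a minimizer of $T^{-}_t\varphi(x)$ and running exactly your $\sigma_0$--Gronwall contradiction, and dismisses the reverse inequality as ``similar,'' which is the half you write out in full via the minimizer of $h_{y,\varphi(y)}(x,t)$. The structural ingredients you isolate (the sub-solution inequality for $h$ along arbitrary curves and the restriction property of $T^{-}$-minimizers) are used implicitly in the paper's proof without separate statement.
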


\begin{proof}
	Let $u(x,t):=T^-_t\varphi(x)$. It suffices to show the following inequality
	\[
	u(x,t)\geq\inf_{y\in M}h_{y,\varphi(y)}(x,t),
	\]
	since the proof of $u(x,t)\leq\inf_{y\in M}h_{y,\varphi(y)}(x,t)$ follows in a similar manner.

	Let $\gamma_1:[0,t]\rightarrow M$ be a minimizer of $u(x,t)$. Set $\bar{y}=\gamma_1(0)$.
	It is  sufficient to show
	\[
	u(x,t)\geq h_{\bar{y},\varphi(\bar{y})}(x,t).
	\]
	Assume by contradiction that $u(x,t)< h_{\bar{y},\varphi(\bar{y})}(x,t)$.
	Since $\gamma_1$ is a minimizer of $u(x,t)$ and in view of the definition of $h_{y,\varphi(y)}(x,t)$, we have
	\begin{align*}
	u(x,t)=\varphi(\bar{y})+\int_0^{t}L(\gamma_1(\tau),u(\gamma_1(\tau),\tau),\dot{\gamma}_1(\tau))d\tau
	\end{align*}
	and
	\begin{align*}
	h_{\bar{y},\varphi(\bar{y})}(x,t)\leq\varphi(\bar{y})+
	\int_0^{t}L(\gamma_1(\tau),h_{\bar{y},\varphi(\bar{y})}(\gamma_1(\tau),\tau),\dot{\gamma}_1(\tau))d\tau.
	\end{align*}
	Set $\bar{u}(\sigma)=u(\gamma_1(\sigma),\sigma)$ and $\bar{h}(\sigma)=h_{\bar{y},\varphi(\bar{y})}(\gamma_1(\sigma),\sigma)$ for $\sigma\in[0,t]$. In particular, we have $\bar{u}(t)=u(x,t)$ and $\bar{h}(t)=h_{\bar{y},\varphi(\bar{y})}(x,t)$.  Let
	\[
	F(\sigma):=\bar{h}(\sigma)-\bar{u}(\sigma), \quad \sigma\in[0,t].
	\]
	Note that $\bar{u}(0)=\varphi(\bar{y})$. From Lemma 3.1 and Lemma 3.2 in \cite{WWY}, we get $\bar{h}(0)=\varphi(\bar{y})$. Then $F(0)=0$ and  $F(t)>0$. Hence, there exists $\sigma_0\in [0,t)$ such that $F(\sigma_0)=0$ and  $F(\sigma)> 0$ for $\sigma\in (\sigma_0,t]$. Moreover, for any $\tau\in (\sigma_0,t]$, we have
	\[
	\bar{u}(\tau)=\bar{u}(\sigma_0)+\int_{\sigma_0}^{\tau}L(\gamma_1(\sigma),\bar{u}(\sigma),\dot{\gamma}_1(\sigma))d\sigma,
	\]
	and
	\[
	\bar{h}(\tau)\leq \bar{h}(\sigma_0)+\int_{\sigma_0}^{\tau}L(\gamma_1(\sigma),\bar{h}(\sigma),\dot{\gamma}_1(\sigma))d\sigma.
	\]
	Since $\bar{h}(\sigma_0)-\bar{u}(\sigma_0)=F(\sigma_0)=0$, a direct calculation implies
	\[
	\bar{h}(\tau)-\bar{u}(\tau)\leq \int_{\sigma_0}^\tau\lambda(\bar{h}(\sigma)-\bar{u}(\sigma))d\sigma.
	\]
	Hence, we have
	\[
	F(\tau)\leq \int_{\sigma_0}^\tau\lambda F(\sigma)d\sigma,
	\]
	which together with Gronwall inequality implies $F(t)\leq 0$. It contradicts $F(t)>0$. Hence, we have
	\[
	u(x,t)\geq\inf_{y\in M}h_{y,\varphi(y)}(x,t).
	\]
	This finishes the proof of the proposition.
\end{proof}

Similarly, for the forward semigroup $\{T^+_t\}_{t\geq 0}$, we have
\[
T^{+}_t\varphi(x)=\sup_{y\in M}h^{y,\varphi(y)}(x,t).
\]

The properties of $T^{+}_t$ can be obtained in a similar manner to those of $T^-_t$ and thus will be omitted.
The semigroup $\{T^+\}_{t\geq 0}$ will be used to tackle other related problems in our forthcoming work.
In the following we will only study the properties of $T^{-}_t$ and denote $T^-_t$ by $T_t$ for brevity.

By Proposition \ref{pr4.1}, we now show the semigroup property of $\{T_t\}_{t\geq 0}$.

\begin{proposition}\label{pr4.3}
	$\{T_t\}_{t\geq 0}$ is a one-parameter semigroup of operators.
\end{proposition}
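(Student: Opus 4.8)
The plan is to prove that $\{T_t\}_{t\geq 0}$ satisfies the three defining properties of a one-parameter semigroup: (1) $T_0 = \mathrm{id}$, (2) $T_{t+s} = T_t \circ T_s$ for all $s,t \geq 0$, and (3) $(t,\varphi) \mapsto T_t\varphi$ is continuous (or at least $t\mapsto T_t\varphi$ is continuous for each fixed $\varphi$, which is what is typically required). Property (1) is immediate from the definition, since for $t=0$ the only admissible curve is the constant curve and the integral term vanishes, so $T_0\varphi(x) = \varphi(x)$.

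For property (2), the main step is to combine the representation formula of Proposition \ref{pr4.1} with the Markov property of the implicit action function (Proposition \ref{Markov}). First I would fix $s, t > 0$ and $\varphi \in C(M,\mathbf{R})$. Using Proposition \ref{pr4.1} twice,
\[
T_{t+s}\varphi(x) = \inf_{y\in M} h_{y,\varphi(y)}(x,t+s),
\]
and by the Markov property, $h_{y,\varphi(y)}(x,t+s) = \inf_{z\in M} h_{z,h_{y,\varphi(y)}(z,s)}(x,t)$. On the other hand, $T_s\varphi(z) = \inf_{y\in M} h_{y,\varphi(y)}(z,s)$, so I need to interchange the infimum over $y$ (inside the definition of $T_s\varphi(z)$) with the outer operations. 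The key technical point is that $u_0 \mapsto h_{z,u_0}(x,t)$ is monotone increasing (Proposition \ref{Mono I}) and continuous (Proposition \ref{pr3.3}), so that
\[
\inf_{y\in M} h_{z, h_{y,\varphi(y)}(z,s)}(x,t) = h_{z, \inf_{y\in M} h_{y,\varphi(y)}(z,s)}(x,t) = h_{z, T_s\varphi(z)}(x,t),
\]
where passing the infimum inside uses monotonicity together with continuity (and the fact that the infimum over the compact set $M$ is attained). Taking the infimum over $z$ and applying Proposition \ref{pr4.1} once more yields $T_{t+s}\varphi(x) = \inf_{z\in M} h_{z,T_s\varphi(z)}(x,t) = T_t(T_s\varphi)(x)$. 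The cases where $s$ or $t$ equals $0$ follow from property (1).

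For property (3), continuity in $t$, I would use the a priori compactness estimate (Lemma \ref{lem2.1}) and the local Lipschitz continuity of $h_{x_0,u_0}(x,t)$ in all variables on $\Omega_{a,b,\delta,T}$ (Proposition \ref{pr3.3}). For $t$ bounded away from $0$ this gives a uniform Lipschitz bound in $t$ directly via the representation formula (since the infimum of a family of uniformly Lipschitz functions is Lipschitz with the same constant, and one needs a uniform bound on the relevant range $[a,b]$ of $\varphi$-values, which holds because $\varphi$ is continuous on compact $M$). For $t \to 0^+$ one uses that $h_{y,\varphi(y)}(x,t) \to \varphi(x)$ uniformly as $t\to 0^+$ (from Lemmas 3.1 and 3.2 of \cite{WWY}, which give $\lim_{s\to 0^+} h_{x_0,u_0}(\gamma(s),s) = u_0$ along minimizers, combined with an equicontinuity argument), hence $T_t\varphi \to \varphi = T_0\varphi$ in $C(M,\mathbf{R})$.

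The main obstacle is the interchange of the infimum over $y$ with the implicit action function in the proof of property (2): one must justify carefully that $\inf_{y} h_{z,h_{y,\varphi(y)}(z,s)}(x,t)$ equals $h_{z,\inf_y h_{y,\varphi(y)}(z,s)}(x,t)$. This requires both the monotonicity in the $u_0$-argument (Proposition \ref{Mono I}) and its continuity (Proposition \ref{pr3.3}), plus the observation that since $M$ is compact and $y\mapsto h_{y,\varphi(y)}(z,s)$ is continuous, the infimum $T_s\varphi(z)$ is attained, so one can exhibit a minimizing sequence (or minimizer) and pass to the limit. Once this interchange is set up cleanly, the rest is a routine application of the Markov property and the representation formula.
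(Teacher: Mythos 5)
Your proof is correct and follows essentially the same route as the paper: representation formula plus Markov property, with the key interchange $\inf_y h_{z,h_{y,\varphi(y)}(z,s)}(x,t)=h_{z,\inf_y h_{y,\varphi(y)}(z,s)}(x,t)$ justified exactly as in the paper (attainment of the infimum by compactness for one inequality, monotonicity in $u_0$ for the other). The only difference is that you additionally verify continuity in $t$, which the paper does not include in this proposition but establishes separately in Proposition \ref{pr4.4}(ii).
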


\begin{proof}
	It is easy to check that $T_0=I$, where $I$ denotes unit operator. We only need to show that $T_{s+t}=T_t\circ T_s$ for all $t$, $s>0$.

	In view of the definition of $T_t$ and Proposition \ref{Markov}, we have
	\begin{align}\label{4-2}
	T_sh_{x_0,u_0}(x,t)=\inf_{y\in M}h_{y,h_{x_0,u_0}(y,t)}(x,s)=h_{x_0,u_0}(x,t+s),\quad \forall x_0,\, x\in M,\, \forall u_0\in\mathbf{R},\, \forall t, s>0.
	\end{align}
	By Proposition \ref{pr4.1} and (\ref{4-2}), we have
	\[
	T_{t+s}\varphi(x)=\inf_{y\in M}h_{y,\varphi(y)}(x,t+s)=\inf_{y\in M}(\inf_{z\in M}h_{z,h_{y,\varphi(y)}(z,s)}(x,t))=\inf_{z\in M}(\inf_{y\in M}h_{z,h_{y,\varphi(y)}(z,s)}(x,t)).
	\]
	On the other hand,
	\begin{align*}
	(T_t\circ T_s\varphi)(x)&=T_t(T_s\varphi)(x)=\inf_{z\in M}h_{z,T_s\varphi(z)}(x,t)
	=\inf_{z\in M}h_{z,\inf_{y\in M}h_{y,\varphi(y)}(z,s)}(x,t).
	\end{align*}
	It remains to verify that
	\[
	\inf_{y\in M}h_{z,h_{y,\varphi(y)}(z,s)}(x,t)=h_{z,\inf_{y\in M}h_{y,\varphi(y)}(z,s)}(x,t),\quad  \forall z\in M.
	\]
	Indeed, by the compactness of $M$, there exists $y_0$ such that $h_{y_0,\varphi(y_0)}(z,s)=\inf_{y\in M}h_{y,\varphi(y)}(z,s)$. Then
	\[
	\inf_{y\in M}h_{z,h_{y,\varphi(y)}(z,s)}(x,t)\leq h_{z,h_{y_0,\varphi(y_0)}(z,s)}(x,t)=h_{z,\inf_{y\in M}h_{y,\varphi(y)}(z,s)}(x,t).
	\]
	It follows from Proposition \ref{Mono I} that for each $y\in M$, we get
	\[
	h_{z,\inf_{y\in M}h_{y,\varphi(y)}(z,s)}(x,t)\leq h_{z,h_{y,\varphi(y)}(z,s)}(x,t),
	\]
	which implies
	\[
	h_{z,\inf_{y\in M}h_{y,\varphi(y)}(z,s)}(x,t)\leq \inf_{y\in M}h_{z,h_{y,\varphi(y)}(z,s)}(x,t).
	\]
	The proof is complete now.
\end{proof}

A direct consequence of Proposition \ref{pr4.3} is as follows.

\begin{corollary}\label{coro}
	For each $x\in M$, we have $T_{t+s}\varphi(x)=\inf_{z\in M}h_{z,T_s\varphi(z)}(x,t)$ for all $t>0$ and all $s>0$.
\end{corollary}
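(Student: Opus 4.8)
Corollary \ref{coro} asserts that $T_{t+s}\varphi(x)=\inf_{z\in M}h_{z,T_s\varphi(z)}(x,t)$ for all $t,s>0$ and all $x\in M$. The plan is to derive this directly from the semigroup property established in Proposition \ref{pr4.3} together with the representation formula of Proposition \ref{pr4.1}. These are the only two ingredients needed, and no delicate estimate is involved.

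First I would write, for arbitrary $t,s>0$ and $x\in M$,
\[
T_{t+s}\varphi(x)=(T_t\circ T_s)\varphi(x)=T_t\big(T_s\varphi\big)(x),
\]
where the first equality is exactly the semigroup identity $T_{t+s}=T_t\circ T_s$ proved in Proposition \ref{pr4.3}. Then I would apply the representation formula of Proposition \ref{pr4.1} to the continuous function $\psi:=T_s\varphi\in C(M,\mathbf{R})$, which gives
\[
T_t\psi(x)=\inf_{z\in M}h_{z,\psi(z)}(x,t)=\inf_{z\in M}h_{z,T_s\varphi(z)}(x,t).
\]
Chaining these two displays yields $T_{t+s}\varphi(x)=\inf_{z\in M}h_{z,T_s\varphi(z)}(x,t)$, which is the claimed identity.

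There is essentially no obstacle here: the statement is a one-line corollary whose content is just the composition of two previously proved facts. The only minor point to be careful about is that Proposition \ref{pr4.1} is stated for $(x,t)\in M\times(0,+\infty)$, so one needs $t>0$ (which is assumed) and one needs $T_s\varphi$ to be a genuine element of $C(M,\mathbf{R})$; this membership is guaranteed by the definition of the backward semigroup together with Lemma \ref{lem4.1}, since the fixed point $u(\cdot,\cdot)$ lies in $C(M\times[0,T],\mathbf{R})$ and hence $u(\cdot,s)=T_s\varphi\in C(M,\mathbf{R})$. With that in hand the proof is complete.
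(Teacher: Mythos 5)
Your proof is correct and matches the paper's intended derivation: the paper presents this as a direct consequence of Proposition \ref{pr4.3}, and indeed the identity $(T_t\circ T_s\varphi)(x)=\inf_{z\in M}h_{z,T_s\varphi(z)}(x,t)$ already appears inside the proof of that proposition as an application of the representation formula to $T_s\varphi$. Your care about $T_s\varphi\in C(M,\mathbf{R})$ is a reasonable, if minor, addition.
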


\begin{proposition}\label{pr4.4}
	Given $\varphi$, $\psi\in C(M,\mathbf{R})$,  we have
	\begin{itemize}
		\item [(i)] if $\psi<\varphi$, then $T_t\psi< T_t\varphi$, \quad $\forall t\geq 0$;
		\item [(ii)] the function $(x,t)\mapsto T_t\varphi(x)$ is locally Lipschitz on $M\times (0,+\infty)$.
	\end{itemize}
\end{proposition}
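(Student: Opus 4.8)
The plan is to read off both statements from the representation formula $T_t\varphi(x)=\inf_{y\in M}h_{y,\varphi(y)}(x,t)$ of Proposition \ref{pr4.1}, combined with the properties of the implicit action function established in Section 3.

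For (i), the case $t=0$ is immediate since $T_0=I$ and $\psi<\varphi$ by hypothesis. Fix now $t>0$ and $x\in M$. Since $\varphi\in C(M,\mathbf{R})$ and, by Proposition \ref{pr3.3}, the map $(x_0,u_0)\mapsto h_{x_0,u_0}(x,t)$ is continuous, the function $y\mapsto h_{y,\varphi(y)}(x,t)$ is continuous on the compact manifold $M$, so its infimum is attained at some $y_0=y_0(x,t)$; thus $T_t\varphi(x)=h_{y_0,\varphi(y_0)}(x,t)$. As $\psi(y_0)<\varphi(y_0)$, Proposition \ref{Mono I} yields $h_{y_0,\psi(y_0)}(x,t)<h_{y_0,\varphi(y_0)}(x,t)$, whence
\[
T_t\psi(x)\le h_{y_0,\psi(y_0)}(x,t)<h_{y_0,\varphi(y_0)}(x,t)=T_t\varphi(x).
\]
Since $x\in M$ was arbitrary, this is (i).

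For (ii), fix $0<\delta<T$; it suffices to prove that $(x,t)\mapsto T_t\varphi(x)$ is Lipschitz on $M\times[\delta,T]$. Because $\varphi$ is continuous and $M$ is compact, there are reals $a<b$ with $\varphi(M)\subset[a,b]$, so $(y,\varphi(y))\in M\times[a,b]$ for every $y\in M$. By Proposition \ref{pr3.3}, $h$ is Lipschitz on $\Omega_{a,b,\delta,T}$ with a constant $l=l_{a,b,\delta,T}$; in particular, for each fixed $y\in M$ the map $(x,t)\mapsto h_{y,\varphi(y)}(x,t)$ is $l$-Lipschitz on $M\times[\delta,T]$, with $l$ not depending on $y$. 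Now $T_t\varphi$ is, by Lemma \ref{lem4.1}, a well-defined continuous (hence finite) function, and by Proposition \ref{pr4.1} it is the pointwise infimum over $y\in M$ of this uniformly $l$-Lipschitz family. Hence it is itself $l$-Lipschitz: for $(x_1,t_1),(x_2,t_2)\in M\times[\delta,T]$ and any $y\in M$,
\[
T_{t_1}\varphi(x_1)\le h_{y,\varphi(y)}(x_1,t_1)\le h_{y,\varphi(y)}(x_2,t_2)+l\bigl(d(x_1,x_2)+|t_1-t_2|\bigr);
\]
taking the infimum over $y$ gives $T_{t_1}\varphi(x_1)\le T_{t_2}\varphi(x_2)+l(d(x_1,x_2)+|t_1-t_2|)$, and interchanging the two points yields the Lipschitz bound. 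Since $0<\delta<T$ were arbitrary, $(x,t)\mapsto T_t\varphi(x)$ is locally Lipschitz on $M\times(0,+\infty)$.

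Both arguments are short; the only points that require care are the attainment of the infimum in (i), which rests on compactness of $M$ and the continuity of $h$ in all its arguments from Proposition \ref{pr3.3}, and, in (ii), the observation that the Lipschitz constant furnished by Proposition \ref{pr3.3} is uniform in the base point $y$ precisely because $(y,\varphi(y))$ stays inside the fixed compact rectangle $M\times[a,b]$. Granting these, the stability of the Lipschitz property under infima of a uniformly Lipschitz family does the rest.
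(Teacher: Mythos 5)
Your proof is correct and follows the paper's approach: part (i) is verbatim the paper's argument (attain the infimum at $y_0$ by compactness and apply the monotonicity of $h$ in $u_0$), and part (ii) is exactly the intended "immediate consequence" of Propositions \ref{pr3.3} and \ref{pr4.1}, which the paper leaves unspelled — namely that $T_t\varphi$ is the infimum of a family of functions that are uniformly Lipschitz on $M\times[\delta,T]$ because $(y,\varphi(y))$ ranges in a fixed compact set $M\times[a,b]$.
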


\begin{proof}
	First we prove (i). Since $\psi<\varphi$, we have $h_{y,\psi(y)}(x,t)<h_{y,\varphi(y)}(x,t)$ for any $y\in M$.
	For each $(x,t)$, by the compactness of $M$ there exists $y_0\in M$ such that $h_{y_0,\varphi(y_0)}(x,t)=\inf_{y\in M}h_{y,\varphi(y)}(x,t)$. It follows that
	\[
	T_t\psi(x)=\inf_{y\in M}h_{y,\psi(y)}(x,t)\leq h_{y_0,\psi(y_0)}(x,t)<h_{y_0,\varphi(y_0)}(x,t)=\inf_{y\in M}h_{y,\varphi(y)}(x,t)=T_t\varphi(x).
	\]
	(ii) is an immediate consequence of Propositions \ref{pr3.3} and \ref{pr4.1}.
\end{proof}

\subsection{Solution semigroups and  viscosity solutions}
At the end of this section, we will show the following result, which together with Propositions \ref{pr4.1} and \ref{pr4.3} implies Theorem \ref{thehj}.

\begin{proposition}\label{pr4.5}
	For any given $\varphi(x)\in C(M,\mathbf{R})$, $u(x,t):=T_t\varphi(x)$  is the unique viscosity solution of (\ref{Cau}).
\end{proposition}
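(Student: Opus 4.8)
The plan is to establish Proposition~\ref{pr4.5} in three stages: first existence (that $u(x,t):=T_t\varphi(x)$ solves the Cauchy problem in the viscosity sense), then the initial condition, then uniqueness. For existence I would first show that $u$ is a \emph{variational solution} of \eqref{ehj} in the sense of Definition~\ref{nw}, and then prove the general fact that a variational solution is a viscosity solution. That the fixed point $u=T_t\varphi$ satisfies condition (i) of Definition~\ref{nw} is immediate from the defining relation $T_t\varphi(x)=\inf_\gamma\{\varphi(\gamma(0))+\int_0^tL(\gamma(s),u(\gamma(s),s),\dot\gamma(s))\,ds\}$: for any continuous piecewise $C^1$ curve $\eta:[t_1,t_2]\to M$, concatenating a minimizer for $u(\eta(t_1),t_1)$ with $\eta$ yields a competitor for $u(\eta(t_2),t_2)$, giving the subsolution-type inequality; condition (ii) follows from Tonelli's theorem applied on the subinterval $[t_1,t_2]$ together with the Markov property (Corollary~\ref{coro}), which guarantees that a minimizer restricted to a subinterval is again a minimizer for the appropriate endpoint data. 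Here one uses that along a minimizer $\gamma$ the triple $(\gamma(s),u(\gamma(s),s),p(s))$ solves \eqref{che} (Proposition~\ref{VP} via the representation formula), so the curve in (ii) is genuinely $C^1$.

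The passage from variational solution to viscosity solution is the technical heart of the existence part. For the subsolution inequality: let $\varphi\in C^1$ touch $u$ from above at $(x_0,t_0)$ with $t_0>0$. Take the minimizing curve $\gamma$ with $\gamma(t_0)=x_0$; for small $h>0$, condition (i) of Definition~\ref{nw} along $\gamma|_{[t_0-h,t_0]}$ gives $u(x_0,t_0)-u(\gamma(t_0-h),t_0-h)\le\int_{t_0-h}^{t_0}L(\gamma,u(\gamma,\cdot),\dot\gamma)$, and since $\varphi\ge u$ with equality at $(x_0,t_0)$, the same holds with $u$ replaced by $\varphi$ on the left; dividing by $h$, letting $h\to0^+$, and using $\dot\varphi(\gamma(s),s)=\varphi_t+\langle\varphi_x,\dot\gamma\rangle$ together with the Legendre–Fenchel inequality $L(x,u,v)\ge\langle p,v\rangle-H(x,u,p)$ in the form $\langle\varphi_x,\dot\gamma\rangle - L \le H(x_0,u(x_0,t_0),\varphi_x(x_0,t_0))$ yields $\varphi_t(x_0,t_0)+H(x_0,u(x_0,t_0),\varphi_x(x_0,t_0))\le0$. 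For the supersolution inequality: let $\psi\in C^1$ touch $u$ from below at $(y_0,s_0)$; here I would use condition (ii) of Definition~\ref{nw} to get, for each small $h>0$, a $C^1$ curve $\gamma_h$ with $\gamma_h(s_0)=y_0$ realizing equality $u(y_0,s_0)-u(\gamma_h(s_0-h),s_0-h)=\int_{s_0-h}^{s_0}L(\gamma_h,u(\gamma_h,\cdot),\dot\gamma_h)$; replacing $u$ by $\psi$ on the left gives $\le$, and a standard argument (the velocities $\dot\gamma_h(s_0)$ stay bounded by the a priori compactness, Lemma~\ref{lem2.1}, so one may extract a limit $v_0$) produces $\psi_t(y_0,s_0)+\langle\psi_x(y_0,s_0),v_0\rangle - L(y_0,u(y_0,s_0),v_0)\ge0$, whence $\psi_t+H\ge0$ at $(y_0,s_0)$ by maximizing over $p$ — or more directly by noting $H(y,u,\psi_x)\ge\langle\psi_x,v_0\rangle-L(y,u,v_0)$. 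The subtle points are handling $t_0=0$ (where only a one-sided statement is needed, or one excludes it per the convention that the initial condition is treated separately) and the regularity needed to differentiate $s\mapsto\varphi(\gamma(s),s)$, which is fine since $\gamma$ is $C^1$ on $(0,t_0]$.

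For the initial condition $u(x,0)=\varphi(x)$: from the representation $T_t\varphi(x)=\inf_{y}h_{y,\varphi(y)}(x,t)$ and the estimates on $h$ (Lemma~3.1, Lemma~3.2 of \cite{WWY}, i.e. $h_{y,\varphi(y)}(x,t)\to\varphi(y)$ as $(x,t)\to(y,0)$, uniformly), together with continuity of $\varphi$ and compactness of $M$, one shows $\lim_{t\to0^+}\|T_t\varphi-\varphi\|_0=0$; alternatively bound $T_t\varphi(x)$ above by $h_{x,\varphi(x)}(x,t)\to\varphi(x)$ and below by choosing near-minimizing $y$ and controlling $d(x,y)$ via Lemma~\ref{lem2.1}. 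This also shows $t\mapsto T_t\varphi$ is continuous up to $t=0$, so $u\in C(M\times[0,+\infty),\mathbf{R})$.

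Uniqueness is the step I expect to be the main obstacle, since $H$ depends on $w$ itself and is only Lipschitz (not monotone) in that variable, so the classical comparison principle does not apply verbatim. The strategy is a Gronwall-type argument exploiting (H3): if $u_1,u_2$ are two viscosity solutions of \eqref{Cau} with the same initial data, one shows $\|u_1(\cdot,t)-u_2(\cdot,t)\|_0\le\lambda\int_0^t\|u_1(\cdot,s)-u_2(\cdot,s)\|_0\,ds$ by a standard doubling-of-variables comparison argument adapted to the $u$-dependence: the Lipschitz bound $|\partial H/\partial u|\le\lambda$ converts the discrepancy in the $H(x,u_i,\cdot)$ terms into the $\lambda\|u_1-u_2\|_0$ remainder, and the usual penalization in $x$ and $t$ handles the rest; Gronwall then forces $u_1\equiv u_2$. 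An alternative, more in the spirit of this paper, is to show directly that any viscosity solution $v$ of \eqref{Cau} must satisfy the variational characterization $v(x,t)=\inf_\gamma\{v(\gamma(0),0)+\int_0^tL(\gamma,v(\gamma,\cdot),\dot\gamma)\}$ — i.e. $v$ is a variational solution — by the standard argument that a viscosity subsolution lies below any such integral functional and a supersolution lies above it (using that along $C^1$ test curves the chain rule plus the viscosity inequalities give the needed differential inequalities, then integrating); since the fixed point of $A_\varphi$ is unique (Lemma~\ref{lem4.1}), this pins down $v=T_t\varphi$. I would present the second approach as the main line, invoking the first only if the equivalence "viscosity solution $\iff$ variational solution" needs the comparison principle as input; in either case the key mechanism is that the Lipschitz-in-$u$ hypothesis (H3) is exactly what makes the Gronwall closure work.
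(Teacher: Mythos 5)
Your existence argument is exactly the paper's: the paper proves Proposition \ref{pr4.5} by showing that $u=T_t\varphi$ is a variational solution in the sense of Definition \ref{nw} (its Lemma \ref{lem4.2}, via the same concatenation/restriction-of-minimizers argument you describe) and then invoking that a variational solution is a viscosity solution (its Lemma \ref{lem4.3}, whose proof is omitted with a reference to Proposition 7.27 of Fathi's notes --- your Legendre--Fenchel computation is precisely that omitted argument). For uniqueness the paper simply cites the comparison theorem of Barles \cite{Ba3}, valid under (H1)--(H3); that is your ``first'' route (doubling of variables plus the Gronwall closure that the Lipschitz bound (H3) makes possible), and it is the correct one.

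The route you say you would present as the main line --- showing that every viscosity solution of (\ref{Cau}) is a variational solution and then invoking uniqueness of the fixed point of $A_\varphi$ from Lemma \ref{lem4.1} --- has a genuine gap. The subsolution half (property (i) of Definition \ref{nw}) can be obtained by integrating the viscosity inequality along Lipschitz curves, but only once one knows $v$ is locally Lipschitz, which is not automatic for a merely continuous solution near $t=0$. More seriously, the supersolution half (property (ii): the existence of a calibrated curve ending at each $(x,t_2)$) is not a ``standard argument'' for an arbitrary viscosity solution; producing such curves is essentially equivalent to proving the representation formula $v=T_t\varphi$, i.e., to the uniqueness you are trying to establish, so this route is circular unless the comparison principle is fed in first. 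You correctly hedge on exactly this point; the hedge is necessary, and the paper sidesteps the issue entirely by quoting the comparison theorem and reserving the variational machinery for the existence half alone.
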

By the comparison theorem (see \cite{Ba3} for instance), it yields that  the viscosity  solution of (\ref{Cau}) is unique under the assumptions (H1)-(H3).
Thus, in order to show Proposition \ref{pr4.5}, it suffices to prove Lemmas \ref{lem4.2} and \ref{lem4.3}.

\begin{lemma}\label{lem4.2}
	$u(x,t)$  is a variational solution of equation (\ref{ehj}).
\end{lemma}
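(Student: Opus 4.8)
The plan is to verify the two defining conditions of a variational solution directly from the representation formula $u(x,t)=\inf_{y\in M}h_{y,\varphi(y)}(x,t)$ together with the fixed-point identity
\[
u(x,t)=\inf_{\gamma}\Big\{\varphi(\gamma(0))+\int_0^t L(\gamma(\tau),u(\gamma(\tau),\tau),\dot\gamma(\tau))\,d\tau\Big\},
\]
where the infimum (over Lipschitz curves ending at $x$) is attained by a minimizer. The key structural fact I would exploit is the Markov/dynamic-programming property: if $\gamma:[0,t]\to M$ is a minimizer of $u(x,t)$, then for any $0\le t_1<t_2\le t$ the restriction $\gamma|_{[t_1,t_2]}$ realizes the ``sub-cost'' from $(\gamma(t_1),t_1)$ to $(\gamma(t_2),t_2)$, i.e.
\[
u(\gamma(t_2),t_2)=u(\gamma(t_1),t_1)+\int_{t_1}^{t_2}L(\gamma(s),u(\gamma(s),s),\dot\gamma(s))\,ds.
\]
This follows from the fixed-point characterization of $u=T^-_\cdot\varphi$ in the same way Proposition \ref{Markov} is proved for the implicit action function, and it immediately gives condition (ii) of Definition \ref{nw} (after checking the minimizer is $C^1$, which comes from the a priori compactness Lemma \ref{lem2.1} and the fact that minimizers solve the Euler--Lagrange flow, i.e.\ equations (\ref{che})).

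For condition (i), I would argue by contradiction along the lines of the proofs of Proposition \ref{pr3.2} and Proposition \ref{pr4.1}. Suppose some continuous piecewise $C^1$ curve $\eta:[t_1,t_2]\to M$ violates the inequality, so
\[
u(\eta(t_2),t_2)-u(\eta(t_1),t_1)>\int_{t_1}^{t_2}L(\eta(s),u(\eta(s),s),\dot\eta(s))\,ds.
\]
Let $\gamma_1:[0,t_1]\to M$ be a minimizer of $u(\eta(t_1),t_1)$ and consider the concatenation $\gamma=\gamma_1\ast\eta$ on $[0,t_2]$, which is an admissible competitor for $u(\eta(t_2),t_2)$. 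The subtlety is that $u$ appears inside the Lagrangian, so one cannot simply add costs; instead set $v(\sigma)=u(\gamma(\sigma),\sigma)$, compare $v$ with the solution $w$ of the integral inequality driven by $L$ along $\gamma$, and use the Lipschitz condition (L3) to get $|v-w|\le \lambda\int|v-w|$ on the relevant interval, then apply Gronwall. More precisely, I would introduce the function measuring the gap between the actual value $u(\eta(s),s)$ and the value one would get by following $\eta$ with cost accumulated from $u(\eta(t_1),t_1)$, show this gap satisfies a Gronwall-type inequality, and conclude it must be nonpositive, contradicting the strict inequality above. This is the same mechanism used repeatedly in Section 3, just transplanted to the semigroup $u=T_t\varphi$.

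The main obstacle — and the step needing the most care — is the interplay between the $u$-dependence of $L$ and the dynamic-programming splitting: unlike the classical Lax--Oleinik case, "$u(x,t)$ along a minimizer" is defined self-referentially through (\ref{iaf}), so every additivity statement has to be re-derived via a continuity/Gronwall argument rather than taken for granted. I expect the cleanest route is to first establish the Markov property for $u=T_t\varphi$ (either cite Corollary \ref{coro} and Proposition \ref{Markov}, or reprove it for the semigroup directly), deduce from it both the minimality identity (ii) and the regularity of minimizers via Proposition \ref{VP}, and only then handle the sub-optimality inequality (i) by the contradiction-plus-Gronwall scheme. The a priori bounds of Lemma \ref{lem2.1} and Lemma \ref{lemA} are needed throughout to ensure all the integrals are finite and the constants uniform on compact time intervals.
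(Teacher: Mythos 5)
Your proposal is correct in substance, but for condition (i) it takes a more roundabout route than the paper, and the detour stems from a worry that does not actually arise here. You write that ``$u$ appears inside the Lagrangian, so one cannot simply add costs,'' and therefore plan a contradiction-plus-Gronwall argument. But $u=T_t\varphi$ is characterized as the fixed point of $A_\varphi$, so the action of \emph{any} Lipschitz curve $\xi$ ending at $(x,t)$, namely $\varphi(\xi(0))+\int_0^{t}L(\xi(\tau),u(\xi(\tau),\tau),\dot\xi(\tau))\,d\tau$ with the already-determined $u$ plugged in, is a well-defined number, and these actions are additive under concatenation. The self-referential difficulty you anticipate is genuine for the implicit action function $h_{x_0,u_0}$ (where the value along a curve is defined through (\ref{iaf}) itself, and Gronwall is indispensable, as in Propositions \ref{pr3.2} and \ref{pr4.1}), but not for the semigroup once the fixed point is in hand. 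Accordingly, the paper proves (i) in two lines: take a minimizer $\bar\gamma$ of $u(\gamma(t_1),t_1)$, concatenate it with the given curve $\gamma|_{[t_1,t_2]}$ to get a competitor $\xi$ for $u(\gamma(t_2),t_2)$, and subtract the exact identity $u(\gamma(t_1),t_1)=\varphi(\bar\gamma(0))+\int_0^{t_1}L(\bar\gamma,u(\bar\gamma,\tau),\dot{\bar\gamma})\,d\tau$ from the inequality $u(\gamma(t_2),t_2)\le \varphi(\xi(0))+\int_0^{t_2}L(\xi,u(\xi,\tau),\dot\xi)\,d\tau$; the integrals over $[0,t_1]$ cancel. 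Your Gronwall scheme can be pushed through, but it buys nothing here. For condition (ii) your plan coincides with the paper's: take a minimizer of $u(x,t_2)$, observe its restriction to $[0,t_1]$ is a competitor for $u(\gamma(t_1),t_1)$, and combine with (i) to upgrade the inequality to an equality; the $C^1$ regularity comes from the minimizer solving (\ref{che}). One small caution: you do not need to ``re-establish the Markov property for $u$'' as a separate preliminary step, since the dynamic-programming identity for restrictions of minimizers falls out of (i) plus the competitor inequality exactly as above.
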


\begin{proof}
	We need to show that $u$ satisfies (i) and (ii) of Definition \ref{nw}.
	(i) Let $\gamma:[t_1,t_2]\rightarrow M$ be a continuous piecewise $C^1$ curve. Let $\bar{\gamma}:[0,t_1]\rightarrow M$ be a minimizer of $u(\gamma(t_1),t_1)$. Consider a curve $\xi:[0,t_2]\rightarrow M$ defined by
	\begin{equation*}\label{xi88}
	\xi(t)=\left\{\begin{array}{ll}
	\hspace{-0.4em}\bar{\gamma}(t),& t\in [0,t_1],\\
	\hspace{-0.4em}\gamma(t),&t\in (t_1,t_2].\\
	\end{array}\right.
	\end{equation*}
	It follows that
	\begin{align*}
	&u(\gamma(t_2),t_2)-u(\gamma(t_1),t_1)\\
=& \inf_{\gamma_2(t_2)=\gamma(t_2)}\{\varphi(\gamma_2(0))+
	\int_0^{t_2}L(\gamma_2(\tau),u(\gamma_2(\tau),\tau),\dot{\gamma}_2(\tau))d\tau\}\\
	&-\inf_{\gamma_1(t_1)=\gamma(t_1)}\{\varphi(\gamma_1(0))+\int_0^{t_1}L(\gamma_1(\tau),u(\gamma_1(\tau),\tau),\dot{\gamma}_1(\tau))d\tau\}\\
	\leq& \varphi(\xi(0))+\int_0^{t_2}L(\xi(\tau),u(\xi(\tau),\tau),\dot{\xi}(\tau))d\tau-\varphi(\bar{\gamma}(0))\\
&-
	\int_0^{t_1}L(\bar{\gamma}(\tau),u(\bar{\gamma}(\tau),\tau),\dot{\bar{\gamma}}(\tau))d\tau,
	\end{align*}
	which gives rise to
	\begin{equation*}
	u(\gamma(t_2),t_2)-u(\gamma(t_1),t_1)\leq \int_{t_1}^{t_2}L(\gamma(\tau),u(\gamma(\tau),\tau),\dot{\gamma}(\tau))d\tau.
	\end{equation*}

	(ii) For each $[t_1,t_2]\subset[0,T]$ and each $x\in M$, there exists a $C^1$ minimizer $\gamma:[0,t_2]\rightarrow M$ with $\gamma(t_2)=x$ such that
	\[
	u(x,t_2)=\varphi(\gamma(0))+\int_0^tL(\gamma(\tau),u(\gamma(\tau),\tau),\dot{\gamma}(\tau))d\tau,
	\]
	which implies
	\[
	u(x,t_2)-u(\gamma(t_1),t_1)=\int_{t_1}^{t_2}L(\gamma(\tau),u(\gamma(\tau),\tau),\dot{\gamma}(\tau))d\tau.
	\]
	This completes the proof of Lemma \ref{lem4.2}.
\end{proof}

\begin{lemma}\label{lem4.3}
	A variational solution of equation (\ref{ehj}) is also a viscosity solution.
\end{lemma}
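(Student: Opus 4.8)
The plan is to take a variational solution $u:M\times[0,T]\to\mathbf{R}$ of \eqref{ehj} and verify the subsolution and supersolution inequalities of Definition \ref{visco} directly from properties (i) and (ii) in Definition \ref{nw}, testing against an arbitrary $C^1$ function touching $u$ from above (resp.\ below) at an interior point $(x_0,t_0)$. First I would treat the subsolution part: suppose $\varphi\in C^1$ and $u-\varphi$ has a local maximum at $(x_0,t_0)$ with $t_0>0$. Using property (i) of the variational solution along a short curve $\gamma$ emanating backward from $(x_0,t_0)$ in a prescribed direction $v\in T_{x_0}M$ — say the solution of $\dot\gamma(s)=v$ for $s\in[t_0-h,t_0]$ with $\gamma(t_0)=x_0$ — I get $u(x_0,t_0)-u(\gamma(t_0-h),t_0-h)\le\int_{t_0-h}^{t_0}L(\gamma(s),u(\gamma(s),s),\dot\gamma(s))\,ds$. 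Then replace $u$ by $\varphi$ on the left using the local-maximum inequality $u(\gamma(t_0-h),t_0-h)-\varphi(\gamma(t_0-h),t_0-h)\le u(x_0,t_0)-\varphi(x_0,t_0)$, divide by $h$, and let $h\to0^+$. The continuity of $L$ and of $u$ (recall $u=T_t\varphi$ is continuous, indeed locally Lipschitz by Proposition \ref{pr4.4}) gives, in the limit, $\varphi_t(x_0,t_0)+\langle\varphi_x(x_0,t_0),v\rangle - L(x_0,u(x_0,t_0),v)\le 0$. Taking the supremum over $v\in T_{x_0}M$ and using $H(x_0,u,p)=\sup_v\{\langle v,p\rangle-L(x_0,u,v)\}$ yields $\varphi_t(x_0,t_0)+H(x_0,u(x_0,t_0),\varphi_x(x_0,t_0))\le 0$, which is the subsolution condition. (The boundary case $t_0=0$ is handled by the initial condition together with one-sided limits, or simply excluded since Definition \ref{visco} can be checked on the open time interval and extended by continuity.)

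For the supersolution part, suppose $\psi\in C^1$ and $u-\psi$ has a local minimum at $(y_0,s_0)$ with $s_0>0$. Here I would use property (ii): for $h$ small there is a $C^1$ curve $\gamma:[s_0-h,s_0]\to M$ with $\gamma(s_0)=y_0$ realizing equality, $u(y_0,s_0)-u(\gamma(s_0-h),s_0-h)=\int_{s_0-h}^{s_0}L(\gamma(s),u(\gamma(s),s),\dot\gamma(s))\,ds$. Using the local-minimum inequality to pass to $\psi$ on the left-hand side and the elementary bound $L(\gamma(s),u,\dot\gamma(s))\ge\langle\dot\gamma(s),\psi_x\rangle - H(\gamma(s),u,\psi_x)$ (Fenchel's inequality) inside the integral, one obtains after dividing by $h$ and letting $h\to0^+$ that $\psi_t(y_0,s_0)+H(y_0,u(y_0,s_0),\psi_x(y_0,s_0))\ge0$, provided the minimizing curves do not degenerate. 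The key point needing care is precisely that: as $h\to0^+$ the curves $\gamma=\gamma_h$ should have $\dot\gamma_h(s_0)$ (or at least an averaged velocity) staying bounded, so that the Fenchel gap closes in the limit; this follows from the a priori compactness estimate (Lemma \ref{lem2.1}, via Lemma \ref{lem4.1}/the representation formula) applied to the minimizers defining $u=T_t\varphi$, which confines $(\gamma_h(s),u(\gamma_h(s),s),\dot\gamma_h(s))$ to a fixed compact set for $s$ near $s_0$.

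The main obstacle, then, is the supersolution inequality — more precisely controlling the minimizing curves from property (ii) as the time increment shrinks, so as to justify the limit in the Fenchel bound. Everything else is a routine application of continuity of $L$, local Lipschitz continuity of $u$ from Proposition \ref{pr4.4}, and the Fenchel duality between $H$ and $L$; the subsolution side needs no curve control at all since there we are free to choose $\gamma$ with any prescribed constant velocity. An alternative to handling this obstacle is to bypass the curve analysis entirely and instead invoke the semigroup property (Proposition \ref{pr4.3}) together with the dynamic programming identity $u(x,t+h)=T_h(u(\cdot,t))(x)=\inf_{\gamma(h)=x}\{u(\gamma(0),t)+\int_0^h L\,d\tau\}$, deriving the viscosity inequalities in the standard Hamilton--Jacobi--Bellman manner; but since the statement is phrased in terms of variational solutions, I would present the direct argument above and only fall back on the semigroup formulation if the compactness bookkeeping becomes unwieldy.
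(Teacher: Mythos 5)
Your proposal is correct and follows essentially the same route as the paper, which omits the proof and refers to Proposition 7.27 of \cite{Fat08}; that argument is precisely yours: property (i) of Definition \ref{nw} tested along constant-velocity curves plus Fenchel duality gives the subsolution inequality, and property (ii) with calibrated curves, whose velocities are controlled by the a priori compactness of minimizers, gives the supersolution inequality. The one point you flag as the "main obstacle" is indeed the only nontrivial step, and Lemma \ref{lem2.1} handles it exactly as you indicate (note that the Fenchel gap need not "close": the $\langle\dot\gamma,\psi_x\rangle$ terms cancel against the chain-rule expansion of $\psi$, so boundedness of $\dot\gamma_h$ is needed only to keep $\gamma_h$ in the neighborhood of the local minimum and to pass to the limit in the integrand).
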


Since the proof of Lemma \ref{lem4.3} is only slightly different from the one
of Proposition 7.27 in \cite{Fat08},
we omit it here for brevity.



\section{Application II: Ergodic problem for $H(x,u,u_x)=c$}
The goal of this part is to prove Theorem \ref{thshj}. For each $c\in \mathbf{R}$, since $L+c$ satisfies all the assumptions imposed on $L$, then  the implicit variational principle and all the results established for $L$ in this paper are still correct for $L+c$.

Denote by $h^c_{x_0,u_0}(x,t)$, $h_c^{x_0,u_0}(x,t)$ and $T^c_t\varphi(x)$ the forward implicit action function, the backward implicit action function and the solution semigroup associated with $L+c$, respectively.

\subsection{Implicit action function associated with $L+c$}
We give two properties of the function $c\mapsto h^c_{x_0,u_0}(x,t)$.

\begin{proposition}[Monotonicity property III]\label{pr5.1}
	Given $x_0\in M$, $u_0\in\mathbf{R}$ and $c_1$, $c_2\in\mathbf{R}$, if $c_1<c_2$, then $h^{c_1}_{x_0,u_0}(x,t)<h^{c_2}_{x_0,u_0}(x,t)$ for all $(x,t)\in M\times (0,+\infty)$.
\end{proposition}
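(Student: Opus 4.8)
The plan is to recognize Proposition \ref{pr5.1} as a direct specialization of Monotonicity property II (Proposition \ref{Mono II}). I would set $L_1 := L + c_1$ and $L_2 := L + c_2$. As already noted at the beginning of this section, for any real constant $c$ the function $L + c$ still satisfies (L1)--(L3): adding a constant changes neither $\partial^2 L/\partial \dot{x}^2$ (so (L1) persists), nor the superlinear growth in $\dot{x}$ (so (L2) persists), nor $\partial L/\partial u$ (so (L3) persists with the same $\lambda$). Moreover $c_1 < c_2$ gives $L_1(x,u,\dot x) = L(x,u,\dot x) + c_1 < L(x,u,\dot x) + c_2 = L_2(x,u,\dot x)$ for every $(x,\dot x,u) \in TM \times \mathbf{R}$, i.e. $L_1 < L_2$ in the sense required by Proposition \ref{Mono II}.

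With this set-up the argument is immediate: apply Proposition \ref{Mono II} to obtain $h^{L_1}_{x_0,u_0}(x,t) < h^{L_2}_{x_0,u_0}(x,t)$ for all $(x,t) \in M \times (0,+\infty)$, and then use the notational convention fixed at the start of Section 5, namely $h^{L+c}_{x_0,u_0} = h^{c}_{x_0,u_0}$, to read this as $h^{c_1}_{x_0,u_0}(x,t) < h^{c_2}_{x_0,u_0}(x,t)$, which is the claim.

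There is essentially no obstacle here, since the real work was already done in Proposition \ref{Mono II}. If a self-contained proof were preferred, I would simply rerun the proof of Proposition \ref{Mono II} verbatim with $L_i = L + c_i$: pick a minimizer $\gamma_2$ of $h^{c_2}_{x_0,u_0}(x,t)$, study $F(s) = h^{c_2}_{x_0,u_0}(\gamma_2(s),s) - h^{c_1}_{x_0,u_0}(\gamma_2(s),s)$, note $F(0)=0$ via the boundary behaviour of the implicit action function and $F(s_0) \neq 0$ for some $s_0$ because of the strict gap $c_2 - c_1 > 0$ along $\gamma_2$, and then exploit that both action functions carry the \emph{same} $u$-Lipschitz constant $\lambda$ to derive an inequality of the form $F(s) \le \lambda \int F$, closing via Gronwall to force the strict inequality. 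Invoking Proposition \ref{Mono II} is the shortest route, and that is what I would write.
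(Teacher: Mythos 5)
Your proposal is correct and is exactly the paper's argument: the paper states that Proposition \ref{pr5.1} is a direct consequence of Proposition \ref{Mono II} (applied with $L_i = L + c_i$, which still satisfy (L1)--(L3) and obey $L_1 < L_2$) and omits the details. Your write-up simply makes that one-line reduction explicit.
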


It is not hard to see that Proposition \ref{pr5.1} is a direct consequence of Proposition \ref{Mono II} and thus we omit the proof here.

\begin{proposition}\label{pr5.2}
	Given any $(x_0,u_0,x,t)\in\Omega_{a,b,\delta,T}$ and $c_1$, $c_2\in\mathbf{R}$, we have
	\[
	|h^{c_1}_{x_0,u_0}(x,t)-h^{c_2}_{x_0,u_0}(x,t)|\leq e^{\lambda t}t|c_1-c_2|\leq e^{\lambda T}T|c_1-c_2|.
	\]
\end{proposition}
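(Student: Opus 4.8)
The plan is to fix $x_0$, $u_0$, $x$, $t$ as in the statement and compare the two implicit action functions $h^{c_1}_{x_0,u_0}$ and $h^{c_2}_{x_0,u_0}$ along a single well-chosen curve, exactly in the spirit of the proof of Proposition \ref{Mono II}. Assume without loss of generality $c_1<c_2$, so by Proposition \ref{pr5.1} we already know $h^{c_1}_{x_0,u_0}(x,t)<h^{c_2}_{x_0,u_0}(x,t)$; it remains to bound the difference from above. Let $\gamma:[0,t]\to M$ be a minimizer of $h^{c_2}_{x_0,u_0}(x,t)$, and set
\[
F(s)=h^{c_2}_{x_0,u_0}(\gamma(s),s)-h^{c_1}_{x_0,u_0}(\gamma(s),s),\qquad s\in(0,t],
\]
extended by $F(0)=0$ using Lemma 3.1 and Lemma 3.2 of \cite{WWY} (both implicit action functions tend to $u_0$ as $s\to 0^+$). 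Then $F$ is continuous and nonnegative on $[0,t]$, and the goal is to prove $F(t)\le e^{\lambda t}t\,|c_1-c_2|$.

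The key step is to derive a Gronwall-type integral inequality for $F$ \emph{with an inhomogeneous term}. Since $\gamma$ is a minimizer of $h^{c_2}_{x_0,u_0}(x,t)$, for $0\le s\le s'\le t$ we have the exact identity
\[
h^{c_2}_{x_0,u_0}(\gamma(s'),s')=h^{c_2}_{x_0,u_0}(\gamma(s),s)+\int_s^{s'}\!\!\big(L(\gamma(\tau),h^{c_2}_{x_0,u_0}(\gamma(\tau),\tau),\dot\gamma(\tau))+c_2\big)\,d\tau,
\]
while the defining infimum for $h^{c_1}_{x_0,u_0}$ (tested against the same curve $\gamma$, using the Markov property) gives
\[
h^{c_1}_{x_0,u_0}(\gamma(s'),s')\le h^{c_1}_{x_0,u_0}(\gamma(s),s)+\int_s^{s'}\!\!\big(L(\gamma(\tau),h^{c_1}_{x_0,u_0}(\gamma(\tau),\tau),\dot\gamma(\tau))+c_1\big)\,d\tau.
\]
Subtracting and using (H3) (the $\lambda$-Lipschitz dependence of $L$ on its second argument) yields
\[
F(s')\ge F(s)-\lambda\int_s^{s'}F(\tau)\,d\tau+(c_2-c_1)(s'-s),
\]
i.e. with $s=0$, $s'=t$ and $F(0)=0$,
\[
F(t)\ge -\lambda\int_0^t F(\tau)\,d\tau+(c_2-c_1)t .
\]
For the reverse bound, one repeats the comparison with the roles reversed: take $\tilde\gamma$ a minimizer of $h^{c_1}_{x_0,u_0}(x,t)$, test the infimum for $h^{c_2}$ against $\tilde\gamma$, and obtain $F(s')\le F(s)+\lambda\int_s^{s'}F(\tau)\,d\tau+(c_2-c_1)(s'-s)$ along $\tilde\gamma$; but since we only need an upper bound on $F$ and $F\ge 0$, it is cleaner to argue directly: from the two displayed inequalities above (now reading them as an upper estimate), $F(s')\le F(s)+\lambda\int_s^{s'}F(\tau)\,d\tau+(c_2-c_1)(s'-s)$, whence with $F(0)=0$,
\[
F(t)\le (c_2-c_1)t+\lambda\int_0^t F(\tau)\,d\tau .
\]
Applying the integral form of Gronwall's inequality to $F(t)\le (c_2-c_1)t+\lambda\int_0^t F(\tau)\,d\tau$ gives $F(t)\le (c_2-c_1)t\,e^{\lambda t}$, which is the claimed estimate; the second inequality $e^{\lambda t}t\le e^{\lambda T}T$ is immediate on $[\delta,T]$ since $s\mapsto se^{\lambda s}$ is increasing.

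The main obstacle is purely a bookkeeping one: making the comparison argument legitimate when one tests the infimum defining $h^{c_i}_{x_0,u_0}$ against a curve $\gamma$ that does not start at $x_0$ but at the intermediate point $\gamma(s)$ at time $s$. This is exactly what the Markov property (Proposition \ref{Markov}), together with the monotonicity properties, is for, and the same device was used in Proposition \ref{Mono II} and Proposition \ref{pr4.1}; one must be slightly careful that the "sub-solution" inequality for $h^{c_1}$ along pieces of $\gamma$ holds with the correct constant $c_1$ added to $L$, and that $F\ge 0$ so that the Gronwall step does not lose a sign. Once these are in place the estimate follows, and by symmetry in $c_1,c_2$ the absolute value in the statement is justified.
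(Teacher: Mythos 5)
Your overall strategy is the same as the paper's: compare the two implicit action functions along a single minimizing curve, subtract the exact identity from the tested infimum to get an integral inequality with inhomogeneous term $(c_2-c_1)s$, and close with Gronwall using $F\ge 0$ (Proposition \ref{pr5.1}) and the monotonicity of $s\mapsto se^{\lambda s}$. But the execution has a direction error at the decisive step. You fix $\gamma$ to be a minimizer of $h^{c_2}_{x_0,u_0}(x,t)$; along that curve the exact identity holds for $h^{c_2}$, while the infimum can only be tested for $h^{c_1}$, so subtraction gives precisely the inequality you first wrote, namely $F(s')\ge F(s)-\lambda\int_s^{s'}F\,d\tau+(c_2-c_1)(s'-s)$, i.e. a lower bound on $F$, which is useless for the claim. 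Your subsequent assertion that the two displayed inequalities can be ``read as an upper estimate'' to give $F(s')\le F(s)+\lambda\int_s^{s'}F\,d\tau+(c_2-c_1)(s'-s)$ along the same $\gamma$ is not justified: along the minimizer of $h^{c_2}$ the roles of equality and inequality are the wrong way around, and no such upper estimate is available. The valid route is the one you mention only in passing and then set aside: take $\tilde\gamma$ a minimizer of $h^{c_1}_{x_0,u_0}(x,t)$ (the action function with the \emph{smaller} constant), so that the exact identity holds for $h^{c_1}$ and testing the defining infimum for $h^{c_2}$ along $\tilde\gamma$ gives $F(s)\le (c_2-c_1)s+\lambda\int_0^s F(\tau)\,d\tau$ with $F$ now taken along $\tilde\gamma$; since $F(t)=h^{c_2}_{x_0,u_0}(x,t)-h^{c_1}_{x_0,u_0}(x,t)$ does not depend on the curve at the endpoint, Gronwall then yields $F(t)\le (c_2-c_1)te^{\lambda t}$. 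This is exactly the paper's proof (it handles $c_1\le c_2$ with a minimizer of $h^{c_1}$ and $c_1>c_2$ with a minimizer of $h^{c_2}$, which is your WLOG reduction).

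A smaller remark: you invoke the Markov property to legitimize testing on subintervals $[s,s']$, but this is not needed if, as in the paper, you integrate from $0$ to $s$: the defining formula (\ref{iaf}) evaluated at the point $(\tilde\gamma(s),s)$, together with the fact that the restriction of a minimizer is again a minimizer, already gives both the exact identity for $h^{c_1}$ and the test inequality for $h^{c_2}$, and then $F(0)=0$ starts the Gronwall argument.
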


\begin{proof}
	If $c_1\leq c_2$, then by Proposition \ref{pr5.1}, we have $h^{c_1}_{x_0,u_0}(x,t)\leq h^{c_2}_{x_0,u_0}(x,t)$. Let $\gamma_1$ be a minimizer of $h^{c_1}_{x_0,u_0}(x,t)$. Then by Proposition \ref{pr5.1} again, for any $s\in (0,t]$, we get
	\begin{equation}\label{tpro}
	h^{c_1}_{x_0,u_0}(\gamma_1(s),s)\leq h^{c_2}_{x_0,u_0}(\gamma_1(s),s).
	\end{equation}
	From (\ref{iaf}) and (L3), we have
	\begin{align*}
	&h^{c_2}_{x_0,u_0}(\gamma_1(s),s)- h^{c_1}_{x_0,u_0}(\gamma_1(s),s)\\
	\leq &(c_2-c_1)s+\int_0^sL(\gamma_1,h^{c_2}_{x_0,u_0}(\gamma_1(\tau),\tau),\dot{\gamma}_1)
	-L(\gamma_1,h^{c_1}_{x_0,u_0}(\gamma_1(\tau),\tau),\dot{\gamma}_1)d\tau\\
	 \leq &(c_2-c_1)t+\int_0^s\lambda(h^{c_2}_{x_0,u_0}(\gamma_1(\tau),\tau)- h^{c_1}_{x_0,u_0}(\gamma_1(\tau),\tau))d\tau.
	\end{align*}
	Let $F(\tau):=h^{c_2}_{x_0,u_0}(\gamma_1(\tau),\tau)- h^{c_1}_{x_0,u_0}(\gamma_1(\tau),\tau)$. It follows from (\ref{tpro}) that $F(\tau)\geq 0$ for any $\tau\in (0,t]$. Hence, we have
	\[
	F(s)\leq (c_2-c_1)t+\int_0^s\lambda F(\tau)d\tau.
	\]
	By Gronwall inequality, it yields
	\[
	F(s)\leq (c_2-c_1)te^{\lambda s}, \quad s\in[0,t].
	\]
	Thus, we have
	\[
	|h^{c_2}_{x_0,u_0}(x,t)-h^{c_1}_{x_0,u_0}(x,t)|= h^{c_2}_{x_0,u_0}(x,t)-h^{c_1}_{x_0,u_0}(x,t)\leq te^{\lambda t}(c_2-c_1)\leq Te^{\lambda T}|c_1-c_2|.
	\]
	We have shown the result for the case $c_1\leq c_2$.

	If $c_1>c_2$, then $h^{c_1}_{x_0,u_0}(x,t)\geq h^{c_2}_{x_0,u_0}(x,t)$. Let $\gamma_2$ be a minimizer of $h^{c_2}_{x_0,u_0}(x,t)$. Let $G(\tau):=h^{c_1}_{x_0,u_0}(\gamma_2(\tau),\tau)- h^{c_2}_{x_0,u_0}(\gamma_2(\tau),\tau)$. By a similar argument used in the first case, we have
	\[
	G(s)\leq (c_1-c_2)te^{\lambda s},\quad s\in[0,t].
	\]
	Thus, we have
	\[
	|h^{c_2}_{x_0,u_0}(x,t)-h^{c_1}_{x_0,u_0}(x,t)|= h^{c_1}_{x_0,u_0}(x,t)-h^{c_2}_{x_0,u_0}(x,t)\leq te^{\lambda t}(c_1-c_2)\leq Te^{\lambda T}|c_1-c_2|.
	\]

	We complete the proof of the Proposition now.
	
\end{proof}

\subsection{Proof of Theorem \ref{thshj}}
Before giving the proof of Theorem \ref{thshj}, we prove  two lemmas first.

\begin{lemma}\label{lem5.1}
	Given $\varphi\in C(M,\mathbf{R})$, there exists $\tilde{c}>0$ such that
	\begin{itemize}
		\item [(i)] $T^c_1\varphi(x)\geq\varphi(x)$, \quad $\forall c\geq\tilde{c}$, \ $\forall x\in M$;
		\item [(ii)] $T^c_1\varphi(x)\leq\varphi(x)$, \quad $\forall c\leq-\tilde{c}$, \ $\forall x\in M$.
	\end{itemize}
\end{lemma}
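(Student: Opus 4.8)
The plan is to reduce both statements to a comparison of $T^c_1\varphi$ with $\varphi$ via the variational/dynamical description of the semigroup, exactly as in the proof of the reversibility property (Proposition \ref{pr3.4}). For item (i), fix $x\in M$ and let $\gamma:[0,1]\to M$ be a minimizer of $T^c_1\varphi(x)=\inf_{y}h^c_{y,\varphi(y)}(x,1)$, so that, writing $u(s):=T^c_s\varphi(\gamma(s))$ and $p(s):=\frac{\partial L}{\partial\dot x}(\gamma(s),u(s),\dot\gamma(s))$, the triple $(\gamma(s),u(s),p(s))$ solves (\ref{che}) with $L$ replaced by $L+c$, $u(0)=\varphi(\gamma(0))$, $\gamma(1)=x$, $u(1)=T^c_1\varphi(x)$. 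Along this curve $\dot u(s)=L(\gamma(s),u(s),\dot\gamma(s))+c\geq d_1-\lambda|u(s)|+c$, where $d_1:=\inf_{(x,\dot x)\in TM}L(x,0,\dot x)$ is finite by (L2). Choosing $c\geq\tilde c$ with $\tilde c$ large enough (depending only on $\lambda$, $d_1$ and $\|\varphi\|_0$) forces, by the comparison theorem for ODEs applied to $\dot v=d_1-\lambda v+c$ with $v(0)=u(0)$, that $u(s)$ stays nonnegative and $u(1)\geq u(0)$; more precisely one wants $\dot u(s)\geq 0$ whenever $|u(s)|\le\|\varphi\|_0$, which is guaranteed once $c>\lambda\|\varphi\|_0-d_1$. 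Hence $T^c_1\varphi(x)=u(1)\geq u(0)=\varphi(\gamma(0))$; but this is not yet $\varphi(x)$, so a small additional argument is needed — see below.

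The point above is that the minimizer ends at $x$ but starts at $\gamma(0)$, which need not be $x$, so I would instead argue directly with the constant curve. Since $T^c_1\varphi(x)=\inf_\gamma\{\varphi(\gamma(0))+\int_0^1(L+c)(\gamma,T^c_\tau\varphi(\gamma(\tau)),\dot\gamma)\,d\tau\}$ over curves with $\gamma(1)=x$, it is cleaner to bound $T^c_1\varphi$ from below by a Gronwall estimate on the deviation $w(s):=T^c_s\varphi(\gamma(s))-\varphi(x)$ along the actual minimizer and control $\varphi(\gamma(0))-\varphi(x)$ using continuity of $\varphi$ together with the a priori bound (Lemma \ref{lem2.1}) on $d(\gamma(0),x)$; alternatively, and most directly, compare $T^c_1\varphi$ with $T^c_1\psi$ for the constant functions $\psi^{\pm}\equiv\pm\|\varphi\|_0$, using the monotonicity of $\varphi\mapsto T^c_t\varphi$ (Proposition \ref{pr4.4}(i)) and the monotonicity in $c$ (Proposition \ref{pr5.1}), and compute $T^c_1$ on constants explicitly. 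On a constant $\psi\equiv k$, taking the constant curve at $x$ gives $T^c_1\psi(x)\geq$ the value of the solution $v$ of $\dot v=L(x,v,0)+c$, $v(0)=k$, at time $1$, while taking any competitor curve and (L3) gives the matching upper bound up to a Gronwall factor; in any case, for $c\geq\tilde c$ large, $T^c_1\psi^-(x)\geq\|\varphi\|_0\geq\varphi(x)$, hence $T^c_1\varphi(x)\geq T^c_1\psi^-(x)\geq\varphi(x)$. Item (ii) is symmetric: along a minimizer one has $\dot u(s)=L(\gamma(s),u(s),\dot\gamma(s))+c\leq d_2(\|\dot\gamma\|)+\lambda|u(s)|+c$ with $d_2$ as in the proof of Proposition \ref{pr3.4}, and taking $c\leq-\tilde c$ sufficiently negative forces $u(1)\leq\varphi(x)$; equivalently compare with the constant function $\psi^+\equiv\|\varphi\|_0$ and use $T^c_1\psi^+(x)\leq-\|\varphi\|_0\leq\varphi(x)$ for $c$ very negative.

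The main obstacle is the mismatch between the endpoint $x=\gamma(1)$ and the free initial point $\gamma(0)$ of the minimizer: the naive ODE comparison controls $T^c_1\varphi(x)$ relative to $\varphi(\gamma(0))$, not relative to $\varphi(x)$. I expect to resolve this by the reduction-to-constants argument just sketched, which sidesteps the issue entirely: it only uses (a) monotonicity of $T^c_1$ in the initial datum, (b) monotonicity of $h^c$ (hence of $T^c_1$) in $c$, both already available, and (c) an elementary ODE comparison for the scalar equation $\dot v=L(x,v,0)+c$, whose solution at time $1$ tends to $+\infty$ (resp. $-\infty$) uniformly in $x\in M$ as $c\to+\infty$ (resp. $c\to-\infty$) because $|L(x,v,0)|$ is bounded for $v$ in any fixed compact set by continuity of $L$ and compactness of $M$, together with (L3). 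Choosing $\tilde c$ from this uniform blow-up completes both parts.
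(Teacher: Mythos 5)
Your overall strategy is essentially the paper's: for (i) one works along a minimizer (obtained from Proposition \ref{pr4.1} together with Proposition \ref{VP} applied to $h^c_{y,\varphi(y)}$ --- this is also the cleanest way to justify your claim that the value along a minimizer of the semigroup solves the characteristic equation $\dot u=L+c$), uses superlinearity to bound $L$ from below on the band $|u|\le\|\varphi\|_0$, and resolves the endpoint mismatch exactly as you propose, by comparing with the constants $\pm\|\varphi\|_0$ instead of with $\varphi(x)$; for (ii) the paper likewise replaces $\varphi(y)$ by $\|\varphi\|_0$ via Proposition \ref{Mono I} and bounds $h^c_{y,\|\varphi\|_0}(x,1)$ from above along a fixed test curve (a geodesic), which is the same mechanism as your constant-curve bound for $T^c_1\psi^+$; monotonicity in $c$ (Propositions \ref{pr5.1} and \ref{pr4.1}) then extends the conclusion to all $c\ge\tilde c$, resp.\ $c\le-\tilde c$.

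Two concrete points in your write-up are wrong as stated and need the fix you already half-have. First, the directions in your sentence about constants are reversed: since $T^c_1\psi(x)$ is an infimum over curves ending at $x$, a specific test curve (the constant curve) only yields an \emph{upper} bound, so the scalar comparison with $\dot v\le L(x,v,0)+c$ can only prove item (ii); the lower bound needed for item (i), namely $T^c_1\psi^-(x)\ge\|\varphi\|_0$, cannot be extracted from $L(x,v,0)$ at all, but must come from a minimizer together with $d_1=\inf_{(x,\dot x)\in TM}L(x,0,\dot x)>-\infty$ (superlinearity), i.e.\ the estimate of your first paragraph; your closing summary attributes both items to the mechanism (c), which fails for (i). Second, choosing $c$ so that merely $\dot u\ge 0$ on the band $|u|\le\|\varphi\|_0$ (i.e.\ $c>\lambda\|\varphi\|_0-d_1$) only gives $u(1)\ge u(0)=-\|\varphi\|_0$; to conclude $T^c_1\psi^-(x)\ge\|\varphi\|_0\ge\varphi(x)$ you must take $c$ so large that $d_1+c-\lambda\|\varphi\|_0>2\|\varphi\|_0$, and add the observation that $u$ cannot cross the level $\|\varphi\|_0$ downwards (at that level $\dot u>0$), so that $u(1)>\|\varphi\|_0$; this is precisely the paper's choice $a+c'>2\|\varphi\|_0+1$, with $a$ the infimum of $L$ over $TM\times[-\|\varphi\|_0,\|\varphi\|_0]$. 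With these repairs your argument goes through and coincides with the paper's proof.
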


\begin{proof}
	(i) For any given $x\in M$ and $c\in\mathbf{R}$, there is $y_x^c\in M$ such that $T^c_1\varphi(x)=h^c_{y_x^c,\varphi(y_x^c)}(x,1)$.
	By Proposition \ref{VP}, there is a solution $(x(s),u(s),p(s))$ of the contact Hamilton's equations associated with $H-c$ with $x(0)=y_x^c$,
	$u(0)=\varphi(y_x^c)$ and $x(1)=x$. Thus,
	\[
	\dot{u}(s)=L(x(s),u(s),\dot{x}(s))+c,\quad u(1)=h^c_{y_x^c,\varphi(y_x^c)}(x,1).
	\]
	Let
	\[
	a=\inf_{\substack{(x,\dot{x})\in TM\\  u\in[-\|\varphi\|_0,\|\varphi\|_0]} }L(x,u,\dot{x}).
	\]
	Since $L$ satisfies (L2), then $a$ is well-defined. We choose $c'\in\mathbf{R}$ such that $a+c'>2\|\varphi\|_0+1$. Since $u(s)$ satisfies
	\[
	\dot{u}(s)=L(x(s),u(s),\dot{x}(s))+c',\quad u(0)=\varphi(y_x^{c'}),
	\]
	then $u(1)=h^{c'}_{y_x,\varphi(y_x^{c'})}(x,1)>\|\varphi\|_0$. Thus, we get
	\[
	T^{c'}_1\varphi(x)= h^{c'}_{y_x^{c'},\varphi(y_x^{c'})}(x,1)>\|\varphi\|_0\geq\varphi(x).
	\]
	From the arguments above, it is clear that the choice of $c'$ is independent of $x$. In view of Propositions \ref{pr5.1} and \ref{pr4.1}, we have
	\[
	T^c_1\varphi(x)\geq\varphi(x),\quad \forall x\in M,\ \forall c\geq c'.
	\]

	(ii) From Propositions \ref{pr4.1} and \ref{Mono I}, we have
	\[
	T^c_1\varphi(x)=\inf_{y\in M}h^c_{y,\varphi(y)}(x,1)\leq \inf_{y\in M}h^c_{y,\|\varphi\|_0}(x,1),\quad \forall x\in M.
	\]
	We only need to show that
	\[
	h^c_{y,\|\varphi\|_0}(x,1)\leq-\|\varphi\|_0,\quad \forall x, \ y\in M
	\]
	for $c<0$ with $-c$ large enough.

	Choose $c\in\mathbf{R}$ such that $B+c<0$, where $B:=\sup\{|L(x,0,\dot{x})|\ \big| \ \|\dot{x}\|\leq \mathrm{diam}(M)\}$. For each $x$, $y\in M$, let $\gamma:[0,1]\to M$ be a geodesic with $\gamma(0)=y$, $\gamma(1)=x$ and $\|\dot{\gamma}\|=d(x,y)$. Then by (\ref{iaf}), we have
	\[
	h^c_{y,\|\varphi\|_0}(\gamma(t),t)-h^c_{y,\|\varphi\|_0}(\gamma(s),s)\leq\int_s^t(L(\gamma,h^c_{y,\|\varphi\|_0}(\gamma(\sigma),\sigma),\dot{\gamma})+c)d\sigma,\quad \forall 0<s<t<1.
	\]
	Let $u^c(s)=h^c_{y,\|\varphi\|_0}(\gamma(s),s)$, $s\in[0,1]$. Then
	\begin{align}\label{5-20}
	\dot{u}^c(s)\leq L(\gamma(s),u^c(s),\dot{\gamma}(s))+c\leq L(\gamma(s),0,\dot{\gamma}(s))+\lambda|u^c(s)|+c\leq B+c+\lambda|u^c(s)|
	\end{align}
	and $u^c(0)=\|\varphi\|_0$. If $s_0\in[0,1]$ is a zero of $u^c$, then we get $\lambda|u^c(s)|<\frac{|B+c|}{2}$ for $s\in[0,1]$ with $|s-s_0|$ small enough. Thus, $\dot{u}^c(s)\leq B+c+\lambda|u^c(s)|\leq \frac{B+c}{2}<0$, which implies that there exists at most one zero of $u^c$ in $[0,1]$.

	We assert that  there is a constant $c''<0$ with $B+c''<0$ such that,  for all $c<c''$ there is $s_0\in[0,\frac{1}{2}]$ such that $u^c(s_0)=0$.
	If the assertion is not true, then for a constant $c\in\mathbf{R}$ with $e^\frac{\lambda}{2}\|\varphi\|_0+(B+c)\frac{1}{\lambda}(e^\frac{\lambda}{2}-1)<0$, there is $c'''<c$ such that $u^{c'''}(s)>0$, $\forall s\in[0,\frac{1}{2}]$. Hence, by (\ref{5-20}) we have
	\[
	\dot{u}^{c'''}(s)-\lambda u^{c'''}(s)\leq B+c''', \quad \forall s\in[0,\frac{1}{2}].
	\]
	Thus, we get
	\[
	\int_0^\frac{1}{2}\frac{d}{ds}(u^{c'''}(s)e^{-\lambda s})ds\leq \int_0^\frac{1}{2}e^{-\lambda s}(B+c''')ds.
	\]
	Hence,
	\[
	0<u^{c'''}(\frac{1}{2})\leq e^\frac{\lambda}{2}\|\varphi\|_0+(B+c''')\frac{1}{\lambda}(e^\frac{\lambda}{2}-1)<0,
	\]
	a contradiction.

	Therefore, there is $c''<0$ with $B+c''<0$ such that for all $c<c''$ there is $s_0\in[0,\frac{1}{2}]$ such that $u^c(s)>0$ for $s\in[0,s_0)$,  $u^c(s_0)=0$,
	$u^c(s)<0$ for $s\in(s_0,1]$. By (\ref{5-20}) we have
	\[
	\dot{u}^c(s)+\lambda u^c(s)\leq B+c, \quad \forall s\in [s_0,1].
	\]
	Thus, we get
	\[
	\int_{s_0}^1\frac{d}{ds}(e^{\lambda s}u^c(s))ds\leq\int_{s_0}^1e^{\lambda s}(B+c)ds,
	\]
	which implies
	\begin{align}\label{5-21}
	u^c(1)\leq \frac{1}{\lambda}(1-e^{\lambda(s_0-1)})(B+c).
	\end{align}
	Since $s_0\in[0,\frac{1}{2}]$, then
	\[
	u^c(1)\leq \frac{1}{\lambda}(1-e^{\lambda(s_0-1)})(B+c)\leq \frac{1}{\lambda}(1-e^{-\lambda})(B+c).
	\]
	Let
	\[
	c''''=\min\{-\frac{\|\varphi\|_0}{\frac{1}{\lambda}(1-e^\lambda)}-B,c''-1\}.
	\]
	Then by (\ref{5-21}), we have $u^{c''''}(1)\leq-\|\varphi\|_0$, i.e.,
	\[
	h^{c''''}_{y,\|\varphi\|_0}(x,1)\leq-\|\varphi\|_0.
	\]

	Note that the above arguments are independent of $x$ and $y$. Therefore, the proof is complete now.
	
\end{proof}

\begin{lemma}\label{lem5.2}
	Given $\varphi\in C(M,\mathbf{R})$, let
	\[
	c_1=\inf\{c\ | \sup_{(x,t)\in M\times[0,+\infty)} T^c_t\varphi(x)=+\infty\}, \qquad c_2=\sup\{c\ | \inf_{(x,t)\in M\times[0,+\infty)} T^c_t\varphi(x)=-\infty\}.
	\]
	Then $-\infty<c_1\leq +\infty$ and $-\infty\leq c_2<+\infty$.
\end{lemma}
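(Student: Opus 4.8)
The plan is to establish the four one-sided bounds $c_1 > -\infty$, $c_1 \le +\infty$ (trivial), $c_2 \ge -\infty$ (trivial), and $c_2 < +\infty$ by combining Lemma \ref{lem5.1} with the monotonicity in $c$ (Proposition \ref{pr5.1}) and the semigroup property (Proposition \ref{pr4.3}). The two nontrivial claims are $c_1 > -\infty$ and $c_2 < +\infty$; by the symmetry between the roles played by large positive $c$ and large negative $c$ in Lemma \ref{lem5.1}, it suffices to carry out one of them, say $c_2 < +\infty$, and indicate that the other is analogous.

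First I would fix $\tilde c > 0$ as produced by Lemma \ref{lem5.1} for the given $\varphi$, so that $T^{-\tilde c}_1 \varphi(x) \le \varphi(x)$ for all $x \in M$. The key step is to iterate this inequality along the semigroup. Since $\{T^{-\tilde c}_t\}_{t \ge 0}$ is order-preserving in the initial datum (Proposition \ref{pr4.4}(i), extended to $\le$ by a limiting argument or applied directly with strict inequalities then passing to the limit), applying $T^{-\tilde c}_1$ to both sides of $T^{-\tilde c}_1\varphi \le \varphi$ and using $T^{-\tilde c}_2 = T^{-\tilde c}_1 \circ T^{-\tilde c}_1$ gives $T^{-\tilde c}_2\varphi \le T^{-\tilde c}_1\varphi \le \varphi$; inductively, $T^{-\tilde c}_{n+1}\varphi \le T^{-\tilde c}_n\varphi \le \varphi$ for all integers $n \ge 0$. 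In particular the sequence $T^{-\tilde c}_n\varphi(x)$ is nonincreasing in $n$ and bounded above by $\varphi(x)$, so $\sup_{(x,t) \in M \times [0,+\infty)} T^{-\tilde c}_t\varphi(x)$ is not the relevant quantity here; rather, one needs that the orbit does not diverge to $-\infty$ uniformly. For the statement $c_2 < +\infty$, what I actually need is: for $c = -\tilde c$ the infimum $\inf_{(x,t)} T^c_t\varphi(x)$ is \emph{not} $-\infty$ — wait, that is false in general, so the correct reading is that for $c$ \emph{small enough} (very negative) the orbit does tend to $-\infty$, hence $c_2 \ge -\tilde c > -\infty$ is the wrong direction too. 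Let me restate: $c_2$ is a supremum of a set of very negative $c$'s, and I must show this set is bounded above, i.e. for $c$ large the orbit's infimum stays finite. By Lemma \ref{lem5.1}(i), for $c \ge \tilde c$ we have $T^c_1\varphi \ge \varphi$, and iterating as above, $T^c_n\varphi \ge T^c_{n-1}\varphi \ge \cdots \ge \varphi$, so $\inf_{(x,t)} T^c_t\varphi(x) \ge \min_M \varphi > -\infty$ — here I also use local Lipschitz continuity in $t$ (Proposition \ref{pr4.4}(ii)) together with the uniform bound at integer times and Proposition \ref{pr5.2} to control the finitely many in-between times, ruling out blow-up on noninteger $t$. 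Hence no $c \ge \tilde c$ belongs to the set defining $c_2$, so $c_2 \le \tilde c < +\infty$. Symmetrically, using Lemma \ref{lem5.1}(ii), for $c \le -\tilde c$ one gets $T^c_n\varphi \le T^c_{n-1}\varphi \le \cdots \le \varphi$, so $\sup_{(x,t)} T^c_t\varphi(x) \le \max_M \varphi < +\infty$, whence no $c \le -\tilde c$ lies in the set defining $c_1$, giving $c_1 \ge -\tilde c > -\infty$.

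The remaining point is to pass from the bound at integer times to a bound for all $t \in [0,+\infty)$, which is needed because the suprema and infima in the definitions of $c_1, c_2$ run over all real $t$. For this I would observe that for $t \in [n, n+1]$ one can write $T^c_t\varphi = T^c_{t-n}(T^c_n\varphi)$ by the semigroup property, and then estimate $T^c_{t-n}\psi$ for $\psi \in C(M,\mathbf R)$ with a one-step modulus: the representation formula $T^c_s\psi(x) = \inf_{y} h^c_{y,\psi(y)}(x,s)$ together with the local Lipschitz estimate of Proposition \ref{pr3.3} (or a direct estimate from (\ref{iaf}) using (L3) and superlinearity as in the proof of Lemma \ref{lem5.1}) shows $\|T^c_s\psi - \psi\|_0$ is controlled, for $s \in [0,1]$, by a constant depending only on $\|\psi\|_0$, $c$ and the Lagrangian — and since $T^c_n\varphi$ is monotone in $n$ and pinched between $\varphi$ and a one-step iterate, its sup norm is bounded uniformly in $n$. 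Feeding this back gives a uniform bound on $T^c_t\varphi(x)$ over all $(x,t)$.

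The main obstacle is this last interpolation step: making sure that the control of $T^c_t\varphi$ at integer times genuinely propagates to all $t$ with constants independent of $n$. The cleanest route is probably to first prove the uniform-in-$n$ bound $\|T^c_n\varphi\|_0 \le \max\{\|\varphi\|_0, \|T^c_1\varphi\|_0\}$ from the monotonicity of the integer-time orbit, and then invoke a Lipschitz-in-time estimate on a unit time interval with a modulus depending only on that uniform bound. Everything else — the iteration, the monotonicity, the symmetric treatment of $c_1$ — is a routine consequence of the semigroup property and Lemmas \ref{lem5.1}, together with Propositions \ref{pr4.3} and \ref{pr4.4}.
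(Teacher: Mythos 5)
Your overall strategy coincides with the paper's: invoke Lemma \ref{lem5.1}, iterate $T^c_1\varphi\geq\varphi$ (resp. $\leq\varphi$) via the semigroup and order-preservation to control integer times, and then pass to all $t$. The logic assigning $c\geq\tilde c$ to the bound $c_2\leq\tilde c$ and $c\leq-\tilde c$ to $c_1\geq-\tilde c$ is correct as you finally state it (and since Lemma \ref{lem5.1} applies to \emph{every} $c\geq\tilde c$, resp. $c\leq-\tilde c$, you do not even need the monotonicity in $c$ that the paper invokes at this point).

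However, your interpolation from integer times to all $t$ contains a genuine gap. You claim that $\|T^c_n\varphi\|_0\leq\max\{\|\varphi\|_0,\|T^c_1\varphi\|_0\}$ follows from monotonicity of the integer-time orbit, and you feed this two-sided uniform bound into a one-step modulus for $\|T^c_s\psi-\psi\|_0$. This is false: monotonicity gives only a \emph{one-sided} bound. For $c\geq\tilde c$ the sequence $T^c_n\varphi$ is nondecreasing and bounded below by $\varphi$, but nothing prevents it from tending to $+\infty$ (indeed this is exactly what happens when $\sup_{(x,t)}T^c_t\varphi=+\infty$, which is compatible with $c_2<+\infty$); dually for $c\leq-\tilde c$. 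So a modulus whose constant depends on $\|T^c_n\varphi\|_0$ is not uniform in $n$, and the interpolation collapses. The repair is the one the paper uses, and it only needs the monotonicity in the initial datum that you already employed for the integer-time iteration: write $T^c_t\varphi=T^c_{\{t\}}\circ T^c_{[t]}\varphi$ and compare with $T^c_{\{t\}}\varphi$ directly, namely $T^c_{\{t\}}(T^c_{[t]}\varphi)\geq T^c_{\{t\}}\varphi\geq\inf_{M\times[0,1]}T^c_s\varphi(x)=:B>-\infty$ when $T^c_{[t]}\varphi\geq\varphi$, and the dual inequality with $\sup$ when $T^c_{[t]}\varphi\leq\varphi$. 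This reduces all non-integer times to the compact set $M\times[0,1]$, where continuity of $(x,s)\mapsto T^c_s\varphi(x)$ gives the finite bound, with no need for any uniform-in-$n$ sup-norm estimate.
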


\begin{proof}
	We first show $-\infty<c_1\leq +\infty$. In view of Lemma \ref{lem5.1}, for $c<0$ with $-c$ large enough, we have
	\[
	T^c_1\varphi(x)\leq \varphi(x), \quad \forall x\in M.
	\]
	By Propositions \ref{pr4.3} and \ref{pr4.4}, we have
	\begin{align}\label{5-22}
	T^c_n\varphi(x)\leq \varphi(x), \quad \forall x\in M, \ \forall n\in \mathbf{N}.
	\end{align}
	Let $A=\sup_{(x,t)\in M\times [0,1]}T^c_t\varphi(x)$. Then by Propositions \ref{pr4.3} and \ref{pr4.4}, (\ref{5-22}), we get
	\[
	T^c_t\varphi(x)=T^c_{[t]+\{t\}}\varphi(x)=T^c_{\{t\}}\circ T^c_{[t]}\varphi(x)\leq T^c_{\{t\}}\varphi(x)\leq A,\ \forall (x,t)\in M\times[0,+\infty),
	\]
	where $\{t\}$ denotes the fractional part of $t$ and $[t]$ denotes the greatest integer not greater than $t$. See Notations in the introduction section for details. In view of Propositions \ref{pr4.1} and \ref{pr5.1}, for $c''<c'$ we have
	\[
	T^{c''}_t\varphi(x)=\inf_{y\in M}h^{c''}_{y,\varphi(y)}(x,t)\leq\inf_{y\in M}h^{c'}_{y,\varphi(y)}(x,t)=T^{c'}_t\varphi(x),\ \forall (x,t)\in M\times[0,+\infty).
	\]
	Hence, for each $\bar{c}\leq c$, we have $T^{\bar{c}}_t\varphi(x)\leq T^{c}_t\varphi(x)\leq A$, $\forall (x,t)\in M\times[0,+\infty)$.
	Therefore, $c_1\neq-\infty$.

	Next we  show $-\infty\leq c_2<+\infty$. In view of Lemma \ref{lem5.1}, for $c>0$  large enough, we have
	\[
	T^c_1\varphi(x)\geq\varphi(x), \quad \forall x\in M.
	\]
	By Propositions \ref{pr4.3} and \ref{pr4.4}, we get
	\[
	T^c_n\varphi(x)\geq \varphi(x), \quad \forall x\in M, \ \forall n\in \mathbf{N}.
	\]
	Let $B=\inf_{(x,t)\in M\times [0,1]}T^c_t\varphi(x)$. Then
	\[
	T^c_t\varphi(x)=T^c_{[t]+\{t\}}\varphi(x)=T^c_{\{t\}}\circ T^c_{[t]}\varphi(x)\geq T^c_{\{t\}}\varphi(x)\geq B,\ \forall (x,t)\in M\times[0,+\infty).
	\]
	From Propositions \ref{pr4.1} and \ref{pr5.1}, for $c''>c'$ we have
	\[
	T^{c''}_t\varphi(x)=\inf_{y\in M}h^{c''}_{y,\varphi(y)}(x,t)\geq\inf_{y\in M}h^{c'}_{y,\varphi(y)}(x,t)=T^{c'}_t\varphi(x),\ \forall (x,t)\in M\times[0,+\infty),
	\]
	Hence, for each $\bar{c}\geq c$, we have $T^{\bar{c}}_t\varphi(x)\geq T^{c}_t\varphi(x)\geq B$, $\forall (x,t)\in M\times[0,+\infty)$. Therefore, $c_2\neq+\infty$.
	
\end{proof}

\noindent\emph{Proof of Theorem \ref{thshj}}.
Given $\varphi\in C(M,\mathbf{R})$, let $c_1$, $c_2$ be as in Lemma \ref{lem5.2}.
In view of Lemma \ref{lem5.2}, we prove the theorem in the following three cases:\\[1mm]
\noindent \textbf{Case I}: $c_1\in \mathbf{R}$.\\[1mm]
\textbf{Case II}: $c_1=+\infty$, $c_2=-\infty$.\\[1mm]
\textbf{Case III}: $c_1=+\infty$, $c_2\in \mathbf{R}$.

\vskip0.1cm
Our plan is:  1) to show that there is $c\in\mathbf{R}$ such that $T^c_t\varphi(x)$ is uniformly bounded with respect to $t>1$ in all three cases; 2) to show that $c$ is the constant for which equation (\ref{shj}) has viscosity solutions.

\vskip.2cm

\noindent\textbf{Step 1}. We first show that there exists $c\in\mathbf{R}$ such that $T^c_t\varphi(x)$ is uniformly bounded with respect to $t>1$ in all three cases.

For Case II, by the definitions of $c_1$ and $c_2$, it is straightforward to see that for each $c
\in\mathbf{R}$, $T^c_t\varphi(x)$ is uniformly bounded with respect to $t>0$. For Case III, by the definitions of $c_1$ and $c_2$,  the uniform boundedness of $T^c_t\varphi(x)$ holds for each $c>c_2$.

For Case I, we will show that there is a constant $c$ such that $T^c_t\varphi(x)$ is uniformly bounded with respect to $t>1$, when $c_1\in \mathbf{R}$. In this case, there are at most three possibilities:\\[2mm]
(i) for each $t>0$, there is $x_t\in M$ such that $T^{c_1}_t\varphi(x_t)=\varphi(x_t)$;\\[2mm]
(ii) there exists $t_0>0$  such that $T^{c_1}_{t_0}\varphi(x)<\varphi(x)$, $\forall x\in M$; \\[2mm]
(iii) there exists $t_0>0$  such that $T^{c_1}_{t_0}\varphi(x)>\varphi(x)$, $\forall x\in M$.

\vskip0.3cm

(i) By Corollary \ref{coro}, we have
\begin{align*}
T^{c_1}_t\varphi(x)=\inf_{z\in M}h^{c_1}_{z,T^{c_1}_{t-1}\varphi(z)}(x,1)\leq h^{c_1}_{x_{t-1},T^{c_1}_{t-1}\varphi(x_{t-1})}(x,1)
=h^{c_1}_{x_{t-1},\varphi(x_{t-1})}(x,1).
\end{align*}
Note that the function $(x_0,u_0,x)\mapsto h^{c_1}_{x_0,u_0}(x,1)$ is continuous on $M\times[-\|\varphi\|_0,\|\varphi\|_0]\times M$.
Thus, $T^{c_1}_t\varphi(x)$ is uniformly bounded from above with respect to $t>1$.

On the other hand, for each $x\in M$, by Proposition \ref{pr4.3} we get
\[
h^{c_1}_{x,T^{c_1}_t\varphi(x)}(x_{t+1},1)\geq\inf_{z\in M} h^{c_1}_{z,T^{c_1}_t\varphi(z)}(x_{t+1},1)=T^{c_1}_{t+1}\varphi(x_{t+1})=\varphi(x_{t+1})\geq-\|\varphi\|_0.
\]
By Proposition \ref{pr3.4}, one can find $u^*\in \mathbf{R}$ such that
\[h_{x,u^*}^{c_1}(x_{t+1},1)=-\|\varphi\|_0.\]
From Proposition \ref{relation}, we get
\[h^{x_{t+1},-\|\varphi\|_0}_{c_1}(x,1)=u^*,\]
which shows
\[
h^{c_1}_{x,h_{c_1}^{x_{t+1},-\|\varphi\|_0}(x,1)}(x_{t+1},1)=-\|\varphi\|_0,\quad \forall x\in M.
\]
In view of Proposition \ref{Mono I}, we get
\[
h_{c_1}^{x_{t+1},-\|\varphi\|_0}(x,1)\leq T^{c_1}_t\varphi(x), \quad \forall x\in M.
\]
Since the function $(x_0,x)\mapsto h_{c_1}^{x_0,-\|\varphi\|_0}(x,1)$ is continuous on $M\times M$, then $T^{c_1}_t\varphi(x)$ is uniformly bounded with respect to $t>0$.

(ii) Now we will show the possibility (ii) does not exist. Otherwise, there is $t_0>0$  such that $T^{c_1}_{t_0}\varphi(x)<\varphi(x)$, $\forall x\in M$. By Propositions \ref{pr4.1}, \ref{pr5.2} and the compactness of $M$, then there exists $\varepsilon_0>0$ such that
\begin{align}\label{5-1}
T^{c_1+\varepsilon_0}_{t_0}\varphi(x)<\varphi(x),\quad \forall x\in M.
\end{align}
Let $A=\sup_{(x,t)\in M\times[0,t_0]}T^{c_1+\varepsilon_0}_t\varphi(x)$. Let
$v^{c_1+\varepsilon_0}(x,t)=T^{c_1+\varepsilon_0}_t\circ T^{c_1+\varepsilon_0}_{t_0}\varphi(x)$. Then from (\ref{5-1}) and Proposition \ref{pr4.4}, we have
\begin{align}\label{5-2}
v^{c_1+\varepsilon_0}(x,s)=T^{c_1+\varepsilon_0}_s\circ T^{c_1+\varepsilon_0}_{t_0}\varphi(x)<T^{c_1+\varepsilon_0}_s\varphi(x),\quad \forall (x,s)\in M\times (0,+\infty).
\end{align}
Note that $T^{c_1+\varepsilon_0}_s\varphi(x)=v^{c_1+\varepsilon_0}(x,s-t_0)$ for $\forall (x,s)\in M\times [t_0,+\infty)$. By (\ref{5-2}) we have
\begin{align}\label{5-3}
v^{c_1+\varepsilon_0}(x,s-t_0)<T^{c_1+\varepsilon_0}_{s-t_0}\varphi(x)\leq A,\quad \forall (x,s)\in M\times[t_0,2t_0].
\end{align}
Hence, for each $(x,s)\in M\times[2t_0,3t_0]$, by (\ref{5-3}) we have
\[
T^{c_1+\varepsilon_0}_s\varphi(x)=v^{c_1+\varepsilon_0}(x,s-t_0)<T^{c_1+\varepsilon_0}_{s-t_0}\varphi(x)\leq A.
\]
Therefore,
\[
T^{c_1+\varepsilon_0}_s\varphi(x)\leq A,\quad \forall(x,s)\in M\times[0,+\infty).
\]
Recall that $c_1=\inf\{c\ | \sup_{(x,t)\in M\times[0,+\infty)} T^c_t\varphi(x)=+\infty\}$. There exists $\bar{c}<c_1+\varepsilon_0$ such that
\begin{align}\label{5-4}
\sup_{(x,t)\in M\times[0,+\infty)} T^{\bar{c}}_t\varphi(x)=+\infty.
\end{align}
From Proposition \ref{pr5.1}, we get
\[
h^{\bar{c}}_{y,\varphi(y)}(x,t)<h^{c_1+\varepsilon_0}_{y,\varphi(y)}(x,t),\quad \forall x,\ y\in M,\ \forall t>0.
\]
From Proposition \ref{pr4.1}, we have
\[
T^{\bar{c}}_t\varphi(x)=\inf_{y\in M}h^{\bar{c}}_{y,\varphi(y)}(x,t)\leq\inf_{y\in M}h^{c_1+\varepsilon_0}_{y,\varphi(y)}(x,t)=T^{c_1+\varepsilon_0}_t\varphi(x)\leq A
\]
for all $(x,t)\in M\times[0,+\infty)$, which contradicts (\ref{5-4}).

(iii) If there exists $t_0>0$ such that $T^{c_1}_{t_0}\varphi(x)>\varphi(x)$ for all $x\in M$, then there is $\varepsilon_0>0$ such that for each $\varepsilon\in(0,\varepsilon_0)$, we have
\[
T^{c_1-\varepsilon}_{t_0}\varphi(x)>\varphi(x), \quad \forall x\in M.
\]

By the definition of $c_1$, $T^{c_1-\varepsilon}_t\varphi(x)$ is uniformly bounded from above with respect to $t\geq0$, i.e.,
\[
\sup_{(x,t)\in M\times[0,+\infty)}T^{c_1-\varepsilon}_t\varphi(x)<+\infty.
\]
It is sufficient to show that $T^{c_1-\varepsilon}_t\varphi(x)$ is uniformly bounded from below with respect to $t\geq0$. Let $B=\min_{(x,t)\in M\times[0,t_0]}T^{c_1-\varepsilon}_t\varphi(x)$. Then
$T^{c_1-\varepsilon}_t\varphi(x)\geq B$ for all $(x,t)\in M\times [0,t_0]$. Let $w^{c_1-\varepsilon}(x,t)=T^{c_1-\varepsilon}_t\circ T^{c_1-\varepsilon}_{t_0}\varphi(x)$. Then
\[
w^{c_1-\varepsilon}(x,s)=T^{c_1-\varepsilon}_s\circ T^{c_1-\varepsilon}_{t_0}\varphi(x)>T^{c_1-\varepsilon}_s\varphi(x), \quad \forall (x,s)\in M\times[0,+\infty).
\]
Note that $T^{c_1-\varepsilon}_t\varphi(x)=w^{c_1-\varepsilon}(x,t-t_0)$ for all $t\geq t_0$. Thus,
\[
T^{c_1-\varepsilon}_s\varphi(x)=w^{c_1-\varepsilon}(x,s-t_0)>T^{c_1-\varepsilon}_{s-t_0}\varphi(x)\geq B,\quad (x,s)\in M\times [t_0,2t_0],
\]
which implies that
\[
T^{c_1-\varepsilon}_s\varphi(x)\geq B,\quad \forall(x,s)\in M\times [0,+\infty).
\]

\vskip.2cm

\noindent\textbf{Step 2}.
In the first step, we have shown the existence of the constant $c\in\mathbf{R}$ for which $T^c_t\varphi(x)$ is uniformly bounded with respect to $t>1$. We are now in a position to show that $\varphi_\infty(x):=\liminf_{t\to+\infty}T^c_t\varphi(x)$ is a viscosity solution to equation (\ref{shj}).

Let $K>0$ be a constant such that $|T^c_t\varphi(x)|\leq K$ for all $x\in M$ and all $t>1$. Since
\[
|T^c_t\varphi(x)-T^c_t\varphi(y)|\leq\sup_{z\in M}|h^c_{z,T^c_{t-1}\varphi(z)}(x,1)-h^c_{z,T^c_{t-1}\varphi(z)}(y,1)|,\quad \forall t>2,\ \forall x,\ y\in M,
\]
then by Proposition \ref{pr3.3}, $h^c_{\cdot,\cdot}(\cdot,1)$ is Lipschitz on $M\times[-K,K]\times M$ with Lipschitz constant $l_1$. Thus, we get
\[
|T^c_t\varphi(x)-T^c_t\varphi(y)|\leq l_1d(x,y).
\]
Hence, $\{T^c_t\varphi(x)\}_{t>2}$ is uniformly bounded and equi-Lipschitz on $M$.

Let $\varphi_\infty(x):=\liminf_{t\to+\infty}T^c_t\varphi(x)$, $\forall x\in M$. Then, from the uniform boundedness of $\{T^c_t\varphi(x)\}_{t>2}$, it is clear that $\varphi_\infty(x)$ is well-defined. By definition, we have
\[
\lim_{t\to+\infty}\inf_{s\geq t}T^c_s\varphi(x)=\varphi_\infty(x),\quad \forall x\in M.
\]
Since
\[
|\inf_{s\geq t}T^c_s\varphi(x)-\inf_{s\geq t}T^c_s\varphi(y)|\leq\sup_{s\geq t}|T^c_s\varphi(x)-T^c_s\varphi(y)|\leq l_1d(x,y),
\quad \forall t>2,
\]
then
\begin{align}\label{5-23}
\lim_{t\to+\infty}\inf_{s\geq t}T^c_s\varphi(x)=\varphi_\infty(x)
\end{align}
uniformly on $x\in M$.

We assert that $\varphi_\infty$ is a fixed point of $\{T^c_t\}$. In fact, for each $t>0$, we have
\begin{align*}
\varphi_\infty(x)=\lim_{\sigma\to+\infty}\inf_{s\geq \sigma}T^c_{s+t}\varphi(x)=\lim_{\sigma\to+\infty}\inf_{s\geq \sigma}\inf_{z\in M}h^c_{z,T^c_s\varphi(z)}(x,t)=\lim_{\sigma\to+\infty}\inf_{z\in M}h^c_{z,\inf_{s\geq \sigma}T^c_s\varphi(z)}(x,t),
\end{align*}
where the second equality is a consequence of Corollary \ref{coro} and the last one follows from Proposition \ref{Mono I}.
Since
\begin{align*}
|\inf_{z\in M}h^c_{z,\inf_{s\geq \sigma}T^c_s\varphi(z)}(x,t)-T^c_t\varphi_\infty(x)|&=|\inf_{z\in M}h^c_{z,\inf_{s\geq \sigma}T^c_s\varphi(z)}(x,t)-\inf_{z\in M}h^c_{z,\varphi_\infty(z)}(x,t)|\\
&\leq\sup_{z\in M}|h^c_{z,\inf_{s\geq \sigma}T^c_s\varphi(z)}(x,t)-h^c_{z,\varphi_\infty(z)}(x,t)|\\
&\leq l_t\|\inf_{s\geq \sigma}T^c_s\varphi-\varphi_\infty\|_0
\end{align*}
for $\sigma>0$ large enough and (\ref{5-23}), then
\[
\varphi_\infty(x)=T^c_t\varphi_\infty(x),\quad \forall x\in M,
\]
where $l_t$ is the Lipschitz constant of the function $(x_0,u_0,x)\mapsto h^c_{x_0,u_0}(x,t)$ on $M\times[-K-\|\varphi_\infty\|_0,K+\|\varphi_\infty\|_0]\times M$. Thus, $T^c_t\varphi_\infty=\varphi_\infty$ for all $t>0$.

By Proposition \ref{pr4.5}, the function $(x,t)\mapsto T^c_t\varphi_\infty(x)$ is a viscosity solution of
\[
w_t+H(x,w,w_x)=c.
\]
Since $\varphi_\infty=T^c_t\varphi_\infty$ for all $t>0$, then $\varphi_\infty$ is a viscosity solution of
\[
H(x,u,u_x)=c.
\]
The proof of Theorem \ref{thshj} is complete.  \hfill$\Box$


\section{Appendix: Proof of Lemma \ref{lem2.1} }

%
%

Given  $a$, $b$, $\delta$, $T\in\mathbf{R}$ with  $a<b$, $0<\delta<T$, recall
\[
\Omega_{a,b,\delta,T}=M\times [a,b]\times M\times [\delta,T].
\]
Let
\[
k=\frac{\mathrm{diam}(M)}{\delta},\quad A=\sup_{\|\dot{x}\|\leq k}L(x,0,\dot{x}),\quad B=\inf_{(x,\dot{x})\in TM}L(x,0,\dot{x}).
\]

\begin{lemma}\label{lemA}
	There is a constant $C_{a,b,\delta,T}>0$ such that
	\[
	|h_{x_0,u_0}(x,t)|\leq C_{a,b,\delta,T}, \quad \forall (x_0,u_0,x,t)\in\Omega_{a,b,\delta,T},
	\]
	where the constant $C_{a,b,\delta,T}$ depends only on $a$, $b$, $\delta$ and $T$.
\end{lemma}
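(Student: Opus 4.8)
The plan is to prove a uniform lower bound and a uniform upper bound for $h_{x_0,u_0}(x,t)$ by two separate arguments: the lower bound will be valid for all $t\in(0,T]$, whereas the upper bound will use crucially that $t\ge\delta>0$. Then $C_{a,b,\delta,T}$ will be the maximum of the two bounds.

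\textbf{Lower bound.} Fix $(x_0,u_0)\in M\times[a,b]$ and an arbitrary $(y,\sigma)\in M\times(0,T]$. By Proposition \ref{VP} there is a minimizer $\gamma:[0,\sigma]\to M$ of $h_{x_0,u_0}(y,\sigma)$, and, writing $u(s)=h_{x_0,u_0}(\gamma(s),s)$ and $p(s)=\frac{\partial L}{\partial\dot{x}}(\gamma(s),u(s),\dot\gamma(s))$, the curve $s\mapsto(\gamma(s),u(s),p(s))$ solves (\ref{che}); hence $u$ is $C^1$ on $(0,\sigma]$ with $\dot u(s)=L(\gamma(s),u(s),\dot\gamma(s))$ and $\lim_{s\to0^+}u(s)=u_0$. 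By the definition of $B$ and (L3), $\dot u(s)\ge L(\gamma(s),0,\dot\gamma(s))-\lambda|u(s)|\ge B-\lambda|u(s)|$. Comparing, via the comparison theorem for ODEs, with the solution $v$ of $\dot v=B-\lambda|v|$, $v(0)=u_0$, and noting that $|\dot v|\le|B|+\lambda|v|$ forces, by Gronwall's inequality, $|v(s)|\le(\max\{|a|,|b|\}+|B|T)e^{\lambda T}=:C_1$ on $[0,T]$, we get $h_{x_0,u_0}(y,\sigma)=u(\sigma)\ge v(\sigma)\ge -C_1$, with $C_1$ depending only on $a$, $b$, $T$.

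\textbf{Upper bound.} Now take $(x_0,u_0,x,t)\in\Omega_{a,b,\delta,T}$ and let $\gamma:[0,t]\to M$ be the constant-speed geodesic from $x_0$ to $x$, so that $\|\dot\gamma\|=d(x_0,x)/t\le\mathrm{diam}(M)/\delta=k$ and hence $L(\gamma(\tau),0,\dot\gamma(\tau))\le A$ for all $\tau$. Set $w(s)=h_{x_0,u_0}(\gamma(s),s)$; as in the proof of Proposition \ref{pr3.4}, $w$ is continuous on $(0,t]$ and $w(s)\to u_0$ as $s\to0^+$. Testing the infimum in (\ref{iaf}) against the arc $\gamma|_{[0,s]}$ gives, for every $s\in(0,t]$,
\[
w(s)\le u_0+\int_0^s L(\gamma(\tau),w(\tau),\dot\gamma(\tau))\,d\tau\le u_0+\int_0^s\big(A+\lambda|w(\tau)|\big)\,d\tau .
\]
Since $w(\tau)\ge -C_1$ by the lower bound, we have $|w(\tau)|\le w(\tau)+2C_1$, so $w(s)\le c+\lambda\int_0^s w(\tau)\,d\tau$ on $[0,t]$, where $c:=\max\{|a|,|b|\}+(|A|+2\lambda C_1)T$. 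A routine Gronwall argument (apply $\phi'=w\le c+\lambda\phi$ to $\phi(s)=\int_0^s w$, $\phi(0)=0$) then yields $w(s)\le c\,(1+e^{\lambda T})=:C_2$ on $[0,t]$, and in particular $h_{x_0,u_0}(x,t)=w(t)\le C_2$, with $C_2$ depending only on $a$, $b$, $\delta$, $T$.

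Combining the two estimates gives $|h_{x_0,u_0}(x,t)|\le\max\{C_1,C_2\}=:C_{a,b,\delta,T}$ on $\Omega_{a,b,\delta,T}$, as required. The point requiring care is that, since Lemma \ref{lemA} is a preparatory step towards the a priori compactness of minimizers in Lemma \ref{lem2.1}, one may not assume any bound on the speed of a minimizer, so $L$ along a minimizer is not controlled from above; the lower bound circumvents this by using only the velocity-free estimate $L(x,u,\dot x)\ge B-\lambda|u|$, and the upper bound circumvents it by replacing the minimizer with an explicit geodesic competitor—whose speed is bounded precisely because $t\ge\delta$—and then absorbing the $\lambda\int|w|$ term by means of the lower bound already obtained.
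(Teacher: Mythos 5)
Your proof is correct and follows essentially the same route as the paper's: a lower bound along a minimizer using only $L(x,u,\dot x)\geq B-\lambda|u|$, and an upper bound by testing the definition of $h$ against the constant-speed geodesic, whose speed is controlled because $t\geq\delta$. The only difference is cosmetic: where the paper disposes of the absolute values $|u|$, $|v|$ by a case analysis on the sign of the solution, you handle them by a direct Gronwall estimate and, for the upper bound, by absorbing $|w|$ via the already-established lower bound (valid for all times in $(0,T]$, as your argument correctly requires).
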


\begin{proof}
	\noindent \textbf{Boundedness from below}. Given any $(x_0,u_0,x,t)\in\Omega_{a,b,\delta,T}$, let $\gamma:[0,t]\to M$ be a minimizer of $h_{x_0,u_0}(x,t)$ and
	$u(s)=h_{x_0,u_0}(\gamma(s),s)$, $s\in[0,t]$. Then $u(t)=h_{x_0,u_0}(x,t)$. We need to show that $u(t)$ is bounded from below by a constant
	which depends only on $a$, $b$, $\delta$ and $T$. There are three possibilities:\\[2mm]
	(i) $u(t)>0$;\\[2mm]
	(ii) $u(s)<0$, $\forall s\in [0,t]$; \\[2mm]
	(iii) there exists $s_0\in[0,t]$  such that $u(s_0)=0$ and $u(s)\leq 0$, $\forall s\in[s_0,t]$.
	
	\vskip0.3cm
	
	(i) $u(t)$ is bounded from below by 0. Thus, we only need to deal with possibilities (ii) and (iii).

	(ii) Note that $u$ satisfies
	\[
	\dot{u}(s)=L(\gamma(s),u(s),\dot{\gamma}(s))\geq  L(\gamma(s),0,\dot{\gamma}(s))+\lambda u(s)\geq B+\lambda u(s),\quad s\in[0,t]
	\]
	and $u(0)=u_0$. Consider the solution $w_1(s)$ of the Cauchy problem
	\[
	\dot{w}_1(s)=B+\lambda w_1(s),\quad w_1(0)=u_0.
	\]
	It is easy to see that $w_1(s)=u_0e^{\lambda s}+\frac{B}{\lambda}(e^{\lambda s}-1)$ and
	\[
	u(t)\geq w_1(t)=u_0e^{\lambda t}+\frac{B}{\lambda}(e^{\lambda t}-1)\geq -|a|e^{\lambda T}-\frac{|B|}{\lambda}(e^{\lambda T}-1).
	\]

	(iii) In this case, $\dot{u}(s)\geq B+\lambda u(s)$ for $s\in[s_0,t]$ and $u(s_0)=0$. Let $w_2(s)$ be the solution of the Cauchy problem
	\[
	\dot{w}_2(s)=B+\lambda w_2(s),\quad w_2(s_0)=0.
	\]
	Then $w_2(s)=\frac{B}{\lambda}(e^{\lambda(s-s_0)}-1)$. Thus, we have
	\[
	u(t)\geq w_2(t)=\frac{B}{\lambda}(e^{\lambda(t-s_0)}-1)\geq-\frac{|B|}{\lambda}(e^{\lambda T}-1).
	\]

	Therefore, we get
	\[
	h_{x_0,u_0}(x,t)\geq -|a|e^{\lambda T}-\frac{|B|}{\lambda}(e^{\lambda T}-1).
	\]

	\vskip.2cm
	
	\noindent \textbf{Boundedness from above}.
	Given any $(x_0,u_0,x,t)\in\Omega_{a,b,\delta,T}$, let $\alpha:[0,t]\to M$ be a geodesic between $x_0$ and $x$ with $\|\dot{\alpha}\|=\frac{d(x_0,x)}{t}\leq\frac{\mathrm{diam}(M)}{\delta}=k$. Let $v(s)=h_{x_0,u_0}(\alpha(s),s)$, $s\in[0,t]$. Then
	$v(t)=h_{x_0,u_0}(x,t)$ and $v(0)=u_0$ by Lemma 3.1 and Lemma 3.2 in \cite{WWY}. By (\ref{iaf}) we have
	\[
	v(s_2)-v(s_1)\leq \int_{s_1}^{s_2}L(\alpha(s),v(s),\dot{\alpha}(s))ds,\quad \forall 0\leq s_1<s_2\leq t.
	\]
	Thus, we get
	\[
	\dot{v}(s)\leq L(\alpha(s),v(s),\dot{\alpha}(s))\leq L(\alpha(s),0,\dot{\alpha}(s))+\lambda|v(s)|.
	\]
	We need to show that $v(t)$ is bounded from above by a constant
	which depends only on $a$, $b$, $\delta$ and $T$. There are three possibilities:\\[2mm]
	(1) $v(t)<0$;\\[2mm]
	(2) $v(s)>0$, $\forall s\in [0,t]$; \\[2mm]
	(3) there exists $s'\in[0,t]$  such that $v(s')=0$ and $v(s)\geq 0$, $\forall s\in[s',t]$.
	
	\vskip0.3cm

	(1) $v(t)$ is bounded from above by 0. Thus, we only need to deal with possibilities (2) and (3).

	(2) Since $v(s)>0$ for all $s\in[0,t]$, then
	\[
	\dot{v}(s)\leq L(\alpha(s),0,\dot{\alpha}(s))+\lambda|v(s)|\leq A+\lambda v(s)
	\]
	and $v(0)=u_0$.  Let $w_3(s)$ be the solution of the Cauchy problem
	\[
	\dot{w}_3(s)=A+\lambda w_3(s),\quad w_3(0)=u_0.
	\]
	Then $w_3(s)=u_0e^{\lambda s}+\frac{A}{\lambda}(e^{\lambda s}-1)$. Thus, we get
	\[
	v(t)\leq w_3(t)=u_0e^{\lambda t}+\frac{A}{\lambda}(e^{\lambda t}-1)=|b|e^{\lambda T}+\frac{|A|}{\lambda}(e^{\lambda T}-1).
	\]

	(3) In this case, $\dot{v}(s)\leq A+\lambda v(s)$ for $s\in[s',t]$ and $v(s')=0$. Let $w_4(s)$ be the solution of the Cauchy problem
	\[
	\dot{w}_4(s)=A+\lambda w_4(s),\quad w_4(s')=0.
	\]
	Then $w_4(s)=\frac{A}{\lambda}(e^{\lambda(s-s')}-1)$. Thus, we have
	\[
	v(t)\leq w_4(t)=\frac{A}{\lambda}(e^{\lambda(t-s')}-1)\leq\frac{|A|}{\lambda}(e^{\lambda T}-1).
	\]

	Hence, we have
	\[
	h_{x_0,u_0}(x,t)\leq |b|e^{\lambda T}+\frac{|A|}{\lambda}(e^{\lambda T}-1).
	\]
	
\end{proof}

\begin{lemma}\label{lemB}
	Given any $(x_0,u_0,x,t)\in\Omega_{a,b,\delta,T}$, let $\gamma:[0,t]\to M$ be a minimizer of $h_{x_0,u_0}(x,t)$. Then
	\[
	|h_{x_0,u_0}(\gamma(s),s)|\leq K_{a,b,\delta,T},\quad \forall s\in[0,t],
	\]
	where $K_{a,b,\delta,T}$ is a positive constant which depends only on $a$, $b$, $\delta$ and $T$.
\end{lemma}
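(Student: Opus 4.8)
\noindent\emph{Proof proposal for Lemma \ref{lemB}.}
The plan is to control the scalar function $u(s):=h_{x_0,u_0}(\gamma(s),s)$ for $s\in(0,t]$ and bound it from below and from above by constants depending only on $a,b,\delta,T$. By Proposition \ref{VP}, along the minimizer $\gamma$ the triple $(\gamma(s),u(s),p(s))$, with $p(s)=\frac{\partial L}{\partial\dot x}(\gamma(s),u(s),\dot\gamma(s))$, solves (\ref{che}); combining the third equation of (\ref{che}) with $\dot\gamma=\frac{\partial H}{\partial p}$ and the Legendre--Fenchel identity $\langle\dot\gamma,p\rangle-H(\gamma,u,p)=L(\gamma,u,\dot\gamma)$ gives the ODE $\dot u(s)=L(\gamma(s),u(s),\dot\gamma(s))$ on $(0,t]$, with $\lim_{s\to0^+}u(s)=u_0\in[a,b]$. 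From (L3), writing $B=\inf_{(x,\dot x)\in TM}L(x,0,\dot x)$, one gets the one-sided differential inequalities $\dot u(s)\ge B+\lambda u(s)$ whenever $u(s)\le 0$, and $\dot u(s)\ge B-\lambda u(s)$ whenever $u(s)\ge 0$. These are the only inputs to the estimates.

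For the lower bound I would repeat, almost verbatim, the ``boundedness from below'' argument in the proof of Lemma \ref{lemA}, but on the interval $[0,s]$ instead of $[0,t]$: split into the three cases (i) $u(s)>0$; (ii) $u<0$ on $[0,s]$; (iii) $u$ has a last zero $s_0\in[0,s)$ with $u\le 0$ on $[s_0,s]$. In case (i) we are done; in case (ii) compare $u$ with the solution of $\dot w=B+\lambda w$, $w(0)=u_0$; in case (iii) compare with $\dot w=B+\lambda w$, $w(s_0)=0$. Since $s\le t\le T$, each comparison yields $u(s)\ge -|a|e^{\lambda T}-\tfrac{|B|}{\lambda}(e^{\lambda T}-1)=: -K_-$, a constant depending only on $a,b,\delta,T$.

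For the upper bound, when $\delta\le s\le t$ I simply invoke Lemma \ref{lemA} with fourth coordinate $s$: since $(x_0,u_0,\gamma(s),s)\in\Omega_{a,b,\delta,T}$, it gives $u(s)\le C_{a,b,\delta,T}$. The only genuinely new range is $s\in(0,\delta)$, and there, if $u(s)<0$ the bound is trivial, while if $u(s)\ge 0$ I would propagate the estimate forward to time $\delta$ (note $\delta\le t$). On the maximal subinterval $[s,\tau_0]\subset[s,\delta]$ on which $u\ge0$, comparison of $\dot u\ge B-\lambda u$ with the solution of $\dot w=B-\lambda w$, $w(s)=u(s)$, gives $u(\tau)\ge u(s)e^{-\lambda(\tau-s)}+\tfrac{B}{\lambda}\bigl(1-e^{-\lambda(\tau-s)}\bigr)$ for $\tau\in[s,\tau_0]$. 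If $\tau_0<\delta$ then $u(\tau_0)=0$, which forces $u(s)\le\tfrac{|B|}{\lambda}e^{\lambda T}$; if $\tau_0=\delta$ then $u(\delta)\le C_{a,b,\delta,T}$ by Lemma \ref{lemA}, whence $u(s)\le\bigl(C_{a,b,\delta,T}+\tfrac{|B|}{\lambda}\bigr)e^{\lambda T}$. In all cases $u(s)$ lies below a constant depending only on $a,b,\delta,T$, and at $s=0$ we have $|u_0|\le\max\{|a|,|b|\}$. Taking $K_{a,b,\delta,T}$ to be the maximum of $K_-$, $C_{a,b,\delta,T}$, $\tfrac{|B|}{\lambda}e^{\lambda T}$, $\bigl(C_{a,b,\delta,T}+\tfrac{|B|}{\lambda}\bigr)e^{\lambda T}$ and $\max\{|a|,|b|\}$ completes the proof. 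The main thing to be careful about is the bookkeeping of the sign of $u$ along $\gamma$, so that at each step the correct one-sided inequality $\dot u\ge B\pm\lambda u$ and the correct comparison ODE are used (exactly as in Lemma \ref{lemA}); beyond that the argument is a routine Gronwall/comparison computation.
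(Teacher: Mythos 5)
Your proof is correct, and the lower bound is handled exactly as in the paper (both defer to the comparison argument from Lemma \ref{lemA}). For the upper bound, however, you take a slightly different and arguably cleaner route. The paper treats all $s\in[0,t]$ uniformly by a contradiction argument: if $u(s_1)$ were too large, it propagates the inequality $\dot u\ge B-\lambda u$ forward to a later time $s_2$ at which $u$ returns to a controlled value (either $u_e=h_{x_0,u_0}(x,t)$, bounded by Lemma \ref{lemA}, or the value $1$, depending on the sign of $u_e$), and derives $u(s_2)>u_e+1$ or $u(s_2)>1$, a contradiction. You instead observe that Lemma \ref{lemA} already bounds $h_{x_0,u_0}(y,s)$ for \emph{every} $y\in M$ and $s\in[\delta,T]$, so the whole range $s\in[\delta,t]$ is free, and the only genuine content of Lemma \ref{lemB} beyond Lemma \ref{lemA} is the short initial segment $s\in(0,\delta)$; there you run the same comparison ODE forward to time $\delta$ (or to the first zero of $u$), anchoring at the Lemma \ref{lemA} bound for $u(\delta)$ rather than at the terminal time. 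Both arguments rest on the same identity $\dot u=L(\gamma,u,\dot\gamma)$ along the minimizer and the same one-sided Gronwall estimates; your version localizes the new work to $(0,\delta)$ and avoids the case split on the sign of $u_e$, while the paper's version never needs to single out the time $\delta$. Either way the resulting constant depends only on $a$, $b$, $\delta$, $T$, so your proof is complete.
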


\begin{proof}\noindent \textbf{Boundedness from below}.
	By similar arguments used in the first part of the proof of Lemma \ref{lemA}, one can show that
	$h_{x_0,u_0}(\gamma(s),s)$ is bounded from below by a constant which depends only on $a$ and $T$.
	We omit the details for brevity.
	
	\vskip.2cm
	
	\noindent \textbf{Boundedness from above}.
	We only need to show that there exists a constant  $K_{a,b,\delta,T}>0$ such that
	\[
	h_{x_0,u_0}(\gamma(s),s)\leq K_{a,b,\delta,T},\quad \forall s\in[0,t].
	\]
	Let $u(s)=h_{x_0,u_0}(\gamma(s),s)$, $s\in[0,t]$ and $u_e=h_{x_0,u_0}(x,t)$. Let $C_{a,b,\delta,T}$ be as in Lemma \ref{lemA}. Then $|u_e|\leq C_{a,b,\delta,T}$ and there are two possibilities:\\[2mm]
	(1) $u_e>0$;\\[2mm]
	(2) $u_e\leq 0$. \\[2mm]

	(1) We assert that
	\[
	u(s)\leq \frac{|B|}{\lambda}+(C_{a,b,\delta,T}+1+\frac{|B|}{\lambda})e^{\lambda T},\quad \forall s\in[0,t].
	\]
	Otherwise, there would be $s_1\in[0,t]$ such that $u(s_1)>\frac{|B|}{\lambda}+(C_{a,b,\delta,T}+1+\frac{|B|}{\lambda})e^{\lambda T}$. Then there is $s_2\in[0,t]$ such that $u(s_2)=u_e$ and
	\[
	u(s)>u_e>0,\quad \forall s\in[s_1,s_2].
	\]
	Note that
	\[
	\dot{u}(s)=L(\gamma(s),u(s),\dot{\gamma}(s))\geq  L(\gamma(s),0,\dot{\gamma}(s))-\lambda |u(s)|\geq B-\lambda u(s),\quad s\in[s_1,s_2].
	\]
	Let $w(s)$ be the solution of the Cauchy problem
	\[
	\dot{w}(s)=B-\lambda w(s),\quad w(s_1)=u(s_1).
	\]
	Then $w(s)=e^{-\lambda (s-s_1)}\big(u(s_1)-\frac{B}{\lambda}\big)+\frac{B}{\lambda}$. Thus, we get
	\[
	u(s_2)\geq w(s_2)=e^{-\lambda (s_2-s_1)}\big(u(s_1)-\frac{B}{\lambda}\big)+\frac{B}{\lambda},
	\]
	which together with $u(s_1)>\frac{|B|}{\lambda}+(C_{a,b,\delta,T}+1+\frac{|B|}{\lambda})e^{\lambda T}$ implies
	\[
	u(s_2)>u_e+1,
	\]
	a contradiction. Hence, the assertion is true.

	(2) In this case, we assert that
	\[
	u(s)\leq \frac{|B|}{\lambda}+(2+\frac{|B|}{\lambda})e^{\lambda T}.
	\]
	If the assertion is not true, there would be $s_1$, $s_2\in[0,t]$ such that
	\[
	u(s_1)> \frac{|B|}{\lambda}+(2+\frac{|B|}{\lambda})e^{\lambda T},\quad u(s_2)=1
	\]
	and $u(s)\geq 1$ for all $s\in[s_1,s_2]$.

	Note that
	\[
	\dot{u}(s)\geq B-\lambda u(s),\quad s\in[s_1,s_2].
	\]
	Let $w'(s)$ be the solution of the Cauchy problem
	\[
	\dot{w}'(s)=B-\lambda w'(s),\quad w'(s_1)=u(s_1).
	\]
	Then $w'(s)=e^{-\lambda (s-s_1)}\big(u(s_1)-\frac{B}{\lambda}\big)+\frac{B}{\lambda}$. Thus, in view of $u(s_1)> \frac{|B|}{\lambda}+(2+\frac{|B|}{\lambda})e^{\lambda T}$ and $u(s_2)=1$, we have
	\[
	u(s_2)\geq w'(s_2)=e^{-\lambda (s_2-s_1)}\big(u(s_1)-\frac{B}{\lambda}\big)+\frac{B}{\lambda}>1,
	\]
	a contradiction.
	
\end{proof}

\vskip.2cm

\noindent\emph{Proof of Lemma \ref{lem2.1}}
Let $u(s)=h_{x_0,u_0}(\gamma(s),s)$, $s\in[0,t]$. Let $K_{a,b,\delta,T}$ be as in Lemma \ref{lemB}.
By Lemma \ref{lemB}, we have
\[
|h_{x_0,u_0}(\gamma(s),s)|\leq K_{a,b,\delta,T},\quad \forall s\in[0,t].
\]
Then from (L2) there is a constant $D:=D_{a,b,\delta,T}\in\mathbf{R}$ such that
\[
L(\gamma(s),u(s),\dot{\gamma}(s))\geq \|\dot{\gamma}(s)\|+D,\quad \forall s\in[0,t].
\]
Choose $Q:=Q_{a,b,\delta,T}>0$ such that
\[
a+Q\delta-|D|T>K_{a,b,\delta,T}.
\]

We assert that there is $s_0\in[0,t]$ such that $\|\dot{\gamma}(s_0)\|\leq Q$.
If this assertion is not true, then $\|\dot{\gamma}(s)\|> Q$, $\forall s\in[0,t]$.
Since
\[
\dot{u}(s)=L(\gamma(s),u(s),\dot{\gamma}(s))\geq \|\dot{\gamma}(s)\|+D,
\]
then
\[
\int_0^t\dot{u}(s)ds\geq\int_0^t\|\dot{\gamma}(s)\|ds+Dt.
\]
Thus, we get
\[
u(t)\geq u_0+Qt+Dt\geq a+Qt+Dt>a+Q\delta-|D|T>K_{a,b,\delta,T},
\]
a contradiction.

Recall that $(\gamma(s),u(s),p(s))$ satisfies equations (\ref{che}), then
\[
\frac{dH}{ds}(\gamma(s),u(s),p(s))=-H(\gamma(s),u(s),p(s))\frac{\partial H}{\partial u}(\gamma(s),u(s),p(s)).
\]
By (H3), we get

\begin{align}\label{6-1}
|H(\gamma(s),u(s),p(s))|\leq |H(\gamma(s_0),u(s_0),p(s_0))|e^{\lambda T}.
\end{align}
Since $\|\dot{\gamma}(s_0)\|\leq Q$, $|u(s_0)|\leq K_{a,b,\delta,T}$ and $p(s_0)=\frac{\partial L}{\partial \dot{x}}(\gamma(s_0),u(s_0),\dot{\gamma}(s_0))$, then $p(s_0)$ is bounded by a constant which depends only on $a$, $b$, $\delta$ and $T$.
Then by (\ref{6-1}) and (H2), we get
\[
\|p(s)\|\leq E_{a,b,\delta,T},
\]
where $E_{a,b,\delta,T}$ is a positive constant which depends only on $a$, $b$, $\delta$ and $T$.

\hfill$\Box$


\vskip 1cm

\noindent {\bf Acknowledgements:}
The authors would like to thank the referees for their valuable comments and suggestions, which helped to improve the presentation of this paper.
Kaizhi Wang is supported by NSFC Grant No. 11371167.
Lin Wang is supported by NSFC Grant No. 11631006, 11401107.
Jun Yan is supported by NSFC Grant No. 11631006, 11325103 and National Basic Research Program of China (973 Program) Grant No. 2013CB834102.

\medskip

\end{document}